\numberwithin{equation}{section}
\numberwithin{equation}{section}
\numberwithin{theorem}{section}
\numberwithin{proposition}{section}
\begin{document}

\title{Maxwell Strata and Conjugate Points in the Sub-Riemannian Problem
on the Lie Group SH(2)}

\author{Yasir Awais Butt, Yuri L. Sachkov, Aamer Iqbal Bhatti}

\institute{Yasir Awais Butt \at Department of Electronic Engineering\\
Muhammad Ali Jinnah University\\
Islamabad, Pakistan\\
Tel.: +92-51-111878787\\
\email{yasir\_awais2000@yahoo.com}\\
\and Yuri L. Sachkov \at Program Systems Institute\\
Pereslavl-Zalessky, Russia\\
\email{yusachkov@gmail.com}\\
\and Aamer Iqbal Bhatti\at Department of Electronic Engineering\\
Muhammad Ali Jinnah University\\
Islamabad, Pakistan\\
Tel.: +92-51-111878787\\
\email{aib@jinnah.edu.pk}}
\maketitle
\begin{abstract}
We study local and global optimality of geodesics in the left invariant
sub-Riemannian problem on the Lie group $\mathrm{SH}(2)$. We obtain
the complete description of the Maxwell points corresponding to the
discrete symmetries of the vertical subsystem of the Hamiltonian system.
An effective upper bound on the cut time is obtained in terms of the
first Maxwell times. We study the local optimality of extremal trajectories
and prove the lower and upper bounds on the first conjugate times.
We also obtain the generic time interval for the $n$-th conjugate
time which is important in the study of sub-Riemannian wavefront.
Based on our results of $n$-th conjugate time and $n$-th Maxwell
time, we prove a generalization of Rolle's theorem that between any
two consecutive Maxwell points, there is exactly one conjugate point
along any geodesic.

\keywords{Sub-Riemannian geometry, Special hyperbolic group SH(2), Maxwell points,
Cut time, Conjugate time} 

\subclass{49J15, 93B27, 93C10, 53C17, 22E30}
\end{abstract}

\section{Introduction}

Geometric control theory for linear systems was initiated in 1970s
\cite{Marro} and was extended to nonlinear systems in 1980s \cite{Isidori}.
An important class of problems addressed by geometric control theory
consists of control of the dynamical systems subjected to nonholonomic
constraints \cite{Arnold_Non_Hol}, \cite{agrachev_sachkov}, \cite{Bloch}.
It turns out that the optimal control of a large number of these physically
interesting systems reduces to finding geodesics with respect to a
sub-Riemannian metric \cite{Bloch}. Owing to the motivations and
ramifications of sub-Riemannian problems in control theory, research
on the sub-Riemannian problem on the group of motions of pseudo Euclidean
plane was initiated in \cite{Extremal_Pseudo_Euclid}. Motions of
the pseudo Euclidean plane form the Lie group $\mathrm{SH}(2)$ \cite{Ja.Vilenkin}.
The sub-Riemannian problem on $\mathrm{SH}(2)$ seeks to obtain optimal
control for the system that comprises left invariant vector fields
with 2-dimensional linear control input and energy cost functional.
The study of sub-Riemannian problem on $\mathrm{SH}(2)$ bears significance
in the program of complete study of all the left-invariant sub-Riemannian
problems on 3-dimensional Lie groups following the classification
in terms of the basic differential invariants \cite{agrachev_barilari}.
The Lie group $\mathrm{SH}(2)$ gives one of the Thurston's 3-dimensional
geometries called Sol \cite{Thurston}.

In \cite{Extremal_Pseudo_Euclid} parametrization of extremal trajectories
in the sub-Riemannian problem on the Lie group SH(2) was obtained
via application of Pontryagin Maximum Principle (PMP). Since PMP provides
only the necessary conditions for the optimal trajectories, the optimality
conditions for given boundary points are therefore satisfied by a
countable number of competing curves with different integral cost,
not because of the optimality, but because the curves terminate on
the boundary of the extended attainable set \cite{Gamkrelidze}. Second
order and global optimality conditions such as conjugate points and
Maxwell points are therefore investigated to establish optimality. 

This paper is an extension of \cite{Extremal_Pseudo_Euclid} in which
we obtained complete parametrization of extremal trajectories $q_{t}=(x_{t},y_{t},z_{t})$
and stated the general conditions for existence of the Maxwell points
in terms of the equations $R_{i}(q_{t})=0$ and $z_{t}=0$ (the functions
$R_{i}$ are given below in (\ref{eq:2.16})). We now extend our analysis
to completely characterize the Maxwell points and obtain the first
Maxwell times. The first Maxwell time forms an effective upper bound
on the cut time. We then investigate local optimality of the geodesics
via description of conjugate points. The roots of the Jacobian of
the exponential mapping are studied, and lower and upper bounds on
the first conjugate time as well as the $n$-th conjugate time are
obtained. We show that the function that gives the upper bound on
the cut time provides the lower bound of the first conjugate time. 

The paper is organized as follows. Section 2 presents a brief review
of our results from \cite{Extremal_Pseudo_Euclid}. In Section 3,
we describe the roots of the functions $R_{i}(q_{t})$ and $z_{t}$.
These roots allow us to calculate the first Maxwell times and an effective
upper bound on the cut time. Section 4 pertains to the local optimality
analysis of geodesics via description of conjugate points. We compute
the lower and upper bound on the first conjugate time as well as the
bounds on the $n$-th conjugate time. Section 4 ends with the 3-dimensional
plots of sub-Riemannian wavefront and sub-Riemannian spheres. Sections
5 and 6 pertain to future work and conclusion respectively.

\section{Previous Work}

\subsection{Problem Statement\label{sec:Problem-Statement}}

Motions of the pseudo Euclidean plane are distance and orientation
preserving maps of the hyperbolic plane. These motions describe hyperbolic
roto-translations of the pseudo Euclidean plane and form a 3-dimensional
Lie group known as the special hyperbolic group $\mathrm{SH(2)}$
\cite{Ja.Vilenkin}. The sub-Riemannian problem on the Lie group $\mathrm{SH(2)}$
reads as follows \cite{Extremal_Pseudo_Euclid}:
\begin{eqnarray}
\dot{x} & = & u_{1}\cosh z,\quad\dot{y}=u_{1}\sinh z,\quad\dot{z}=u_{2},\label{eq:2.1}\\
q & = & (x,y,z)\in M=\mathrm{SH(2)\cong\mathbb{R}^{3}},\quad x,y,z\in\mathbb{R},\quad(u_{1},u_{2})\in\mathbb{R}^{2},\label{eq:2.2}\\
q(0) & = & (0,0,0),\qquad q(t_{1})=q_{1}=(x_{1},y_{1},z_{1}),\label{eq:2.3}\\
l & = & \int_{0}^{t_{1}}\sqrt{u_{1}^{2}+u_{2}^{2}}\, dt\to\min.\label{eq:2.4}
\end{eqnarray}
By Cauchy-Schwarz inequality, the sub-Riemannian length functional
$l$ minimization problem (\ref{eq:2.4}) is equivalent to the problem
of minimizing the following action functional with fixed $t_{1}$
\cite{sachkov_lectures}:
\begin{equation}
J=\frac{1}{2}\intop_{0}^{t_{1}}(u_{1}^{2}+u_{2}^{2})dt\rightarrow\min.\label{eq:2.5}
\end{equation}

\subsection{Known Results\label{sec:Previous-Work}}

We now briefly review the results from \cite{Extremal_Pseudo_Euclid}
as a ready reference in this paper. System (\ref{eq:2.1}) satisfies
the bracket generating condition and is hence globally controllable
\cite{Chow},\cite{Ravchevsky}. Existence of optimal trajectories
for the optimal control problem (\ref{eq:2.1})--(\ref{eq:2.5}) follows
from Filippov\textquoteright s theorem \cite{agrachev_sachkov}. Since
the problem is 3-dimensional contact, it is well known that abnormal
extremal trajectories are constant \cite{Agrachev_Exp_Map}. We applied
PMP \cite{agrachev_sachkov} to (\ref{eq:2.1})--(\ref{eq:2.5}) to
derive the normal Hamiltonian system. It turns out that the vertical
part of the normal Hamiltonian system is a double covering of a mathematical
pendulum. The normal Hamiltonian system is given as:
\begin{eqnarray}
\dot{\gamma} & = & c,\quad\dot{c}=-\sin\gamma,\quad\lambda=(\gamma,c)\in C\cong(2S_{\gamma}^{1})\times\mathbb{R}_{c},\quad2S_{\gamma}^{1}=\mathbb{R}/(4\pi\mathbb{Z}),\label{eq:2.6}\\
\dot{x} & = & \cos\frac{\gamma}{2}\cosh z,\quad\dot{y}=\cos\frac{\gamma}{2}\sinh z,\quad\dot{z}=\sin\frac{\gamma}{2}.\label{eq:2.7}
\end{eqnarray}
The initial cylinder of the vertical subsystem was decomposed into
the following subsets based upon the pendulum energy that correspond
to various pendulum trajectories:
\begin{eqnarray*}
C & = & \bigcup_{i=1}^{5}C_{i},
\end{eqnarray*}
where
\begin{eqnarray}
C_{1} & = & \left\{ \lambda\in C\,\vert\, E\in(-1,1)\right\} ,\label{eq:2.8}\\
C_{2} & = & \left\{ \lambda\in C\,\vert\, E\in(1,\infty)\right\} ,\\
C_{3} & = & \left\{ \lambda\in C\,\vert\, E=1,c\neq0\right\} ,\\
C_{4} & = & \left\{ \lambda\in C\,\vert\, E=-1,\, c=0\right\} =\left\{ (\gamma,c)\in C\,\vert\,\gamma=2\pi n,\, c=0\right\} ,\quad n\in\mathbb{N},\\
C_{5} & = & \left\{ \lambda\in C\,\vert\, E=1,\, c=0\right\} =\left\{ (\gamma,c)\in C\,\vert\,\gamma=2\pi n+\pi,\, c=0\right\} ,\quad n\in\mathbb{N}.\label{eq:2.12}
\end{eqnarray}
\begin{figure}
\centering{}\includegraphics[scale=0.5]{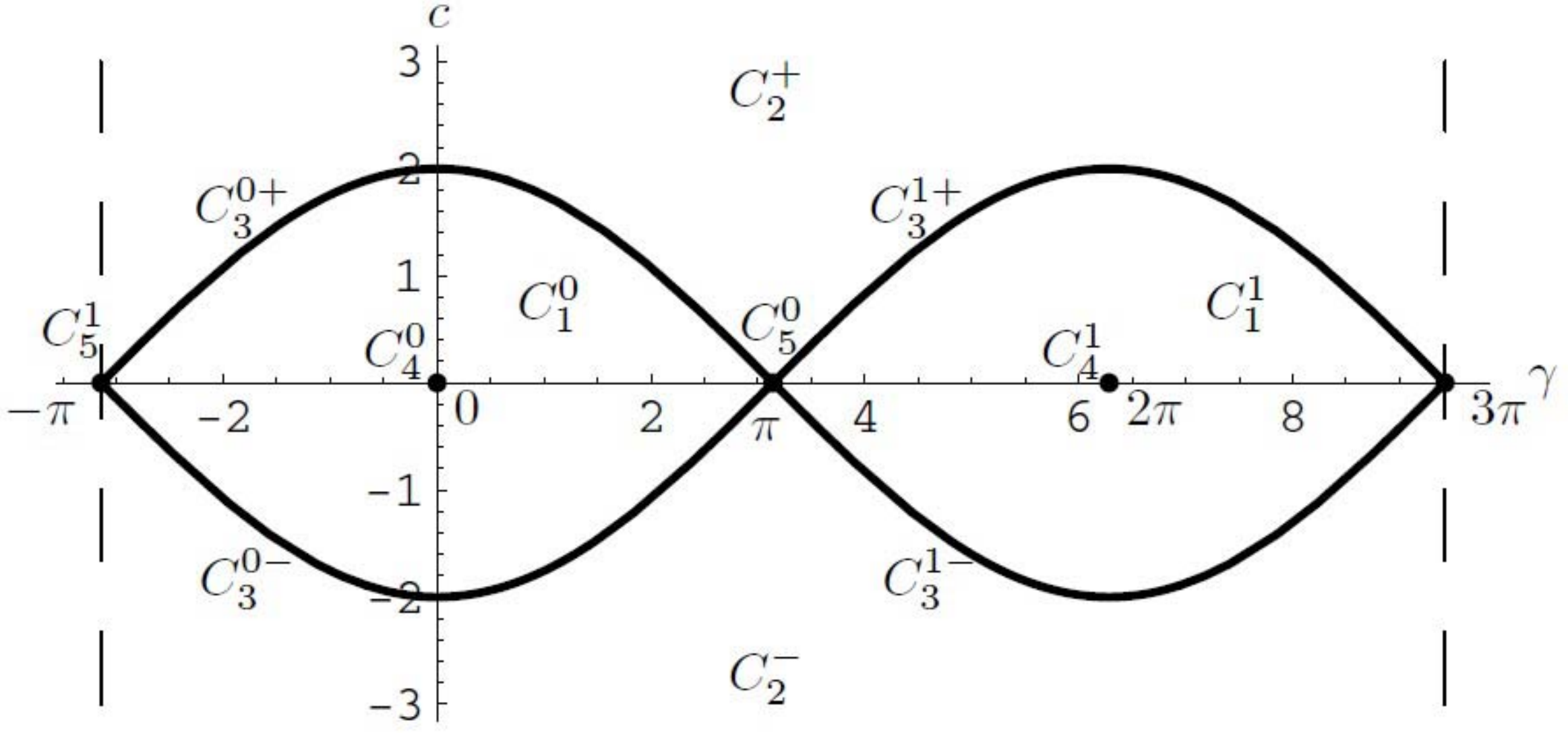}\protect\caption{\label{fig:Decomposition}Decomposition of the Phase Cylinder $C$
of the Pendulum}
\end{figure}
We defined elliptic coordinates $(\varphi,k)$ for $\lambda\in\cup_{i=1}^{3}C_{i}\subset C$
and proved that the flow of the pendulum is rectified in these coordinates.
Note that $k$ was defined as the reparametrized energy and $\varphi$
was defined as the reparametrized time of motion of the pendulum \cite{Extremal_Pseudo_Euclid}.
Integration of the horizontal subsystem in elliptic coordinates follows
from integration of the vertical subsystem and the resulting extremal
trajectories are parametrized by the Jacobi elliptic functions $\mathrm{sn}(\varphi,k)$,
$\mathrm{cn}(\varphi,k)$, $\mathrm{dn}(\varphi,k)$, $\mathrm{E}(\varphi,k)=\intop_{0}^{\varphi}\mathrm{dn^{2}}(t,k)dt$
(Theorems 5.1--5.5 \cite{Extremal_Pseudo_Euclid}). The results of
integration for $\lambda\in C_{i},\quad i=1,2,3$, are summarized
as: 
\begin{itemize}
\item Case 1 : $\lambda=(\varphi,k)\in C_{1}$
\begin{equation}
\left(\begin{array}{c}
x_{t}\\
y_{t}\\
z_{t}
\end{array}\right)=\left(\begin{array}{c}
\frac{s_{1}}{2}\left[\left(w+\frac{1}{w\left(1-k^{2}\right)}\right)\left[\mathrm{E}(\varphi_{t})-\mathrm{E}(\varphi)\right]+\left(\frac{k}{w(1-k^{2})}-kw\right)\left[\mathrm{sn}\,\varphi_{t}-\mathrm{sn}\,\varphi\right]\right]\\
\frac{1}{2}\left[\left(w-\frac{1}{w\left(1-k^{2}\right)}\right)\left[\mathrm{E}(\varphi_{t})-\mathrm{E}(\varphi)\right]-\left(\frac{k}{w\left(1-k^{2}\right)}+kw\right)\left[\mathrm{sn}\,\varphi_{t}-\mathrm{sn}\,\varphi\right]\right]\\
s_{1}\ln\left[(\mathrm{dn}\,\varphi_{t}-k\mathrm{cn}\,\varphi_{t}).w\right]
\end{array}\right),\label{eq:2.13}
\end{equation}
where $w=\frac{1}{\mathrm{dn}\varphi-k\mathrm{cn}\varphi}$, $s_{1}=\mathrm{sgn}\left(\cos\frac{\gamma}{2}\right)$
and $\varphi_{t}=\varphi+t$.
\item Case 2 : $\lambda=(\psi,k)\in C_{2}$ 
\begin{eqnarray}
x_{t} & = & \frac{1}{2}\left(\frac{1}{w(1-k^{2})}-w\right)\left[\mathrm{E}(\psi_{t})-\mathrm{E}(\psi)-k^{\prime2}\left(\psi_{t}-\psi\right)\right]\nonumber \\
 & + & \frac{1}{2}\left(kw+\frac{k}{w(1-k^{2})}\right)\left[\mathrm{sn}\,\psi_{t}-\mathrm{sn}\,\psi\right],\nonumber \\
y_{t} & = & -\frac{s_{2}}{2}\left(\frac{1}{w(1-k^{2})}+w\right)\left[\mathrm{E}(\psi_{t})-\mathrm{E}(\psi)-k^{\prime2}(\psi_{t}-\psi)\right]\nonumber \\
 & + & \frac{s_{2}}{2}\left(kw-\frac{k}{w(1-k^{2})}\right)\left[\mathrm{sn}\,\psi_{t}-\mathrm{sn}\,\psi\right],\nonumber \\
z_{t} & = & s_{2}\ln\left[\left(\mathrm{dn}\,\psi_{t}-k\mathrm{cn}\,\psi_{t}\right).w\right],\label{eq:2.14}
\end{eqnarray}
where $\psi=\frac{\varphi}{k}$, $\quad\psi_{t}=\frac{\varphi_{t}}{k}=\psi+\frac{t}{k}$
and $w=\frac{1}{\mathrm{dn}\,\psi-k\mathrm{cn}\,\psi}$, $s_{2}=\mathrm{sgn}\, c$. 
\item Case 3 : $\lambda=(\varphi,k)\in C_{3}$
\begin{equation}
\left(\begin{array}{c}
x_{t}\\
y_{t}\\
z_{t}
\end{array}\right)=\left(\begin{array}{c}
\frac{s_{1}}{2}\left[\frac{1}{w}\left(\varphi_{t}-\varphi\right)+w\left(\tanh\varphi_{t}-\tanh\varphi\right)\right]\\
\frac{s_{2}}{2}\left[\frac{1}{w}\left(\varphi_{t}-\varphi\right)-w\left(\tanh\varphi_{t}-\tanh\varphi\right)\right]\\
-s_{1}s_{2}\ln[w\,\textrm{sech}\,\varphi_{t}]
\end{array}\right),\label{eq:2.15}
\end{equation}
where $w=\cosh\varphi$. 
\end{itemize}
The phase portrait of the pendulum admits a discrete group of symmetries
$G=\{Id,\varepsilon^{1},\ldots,\varepsilon^{7}\}$. The symmetries
$\varepsilon^{i}$ are reflections and translations about the coordinates
axes $(\gamma,c)$. These symmetries are exploited to state the general
conditions on Maxwell strata in terms of the functions $z_{t}$ and
$R_{i}$ given as: 
\begin{equation}
R_{1}=y\cosh\frac{z}{2}-x\sinh\frac{z}{2},\quad R_{2}=x\cosh\frac{z}{2}-y\sinh\frac{z}{2}.\label{eq:2.16}
\end{equation}
We define the Maxwell sets $\mathrm{MAX}^{i},\quad i=1,\ldots,7$,
resulting from the reflections $\varepsilon^{i}$ of the extremals
in the preimage of the exponential mapping $N$ as:
\[
\mathrm{MAX}^{i}=\left\{ \text{\ensuremath{\nu}}=(\text{\ensuremath{\lambda}},t)\text{\ensuremath{\in}}N\quad|\quad\lambda\neq\lambda{}^{i},\quad\mathrm{Exp}(\lambda,t)=\mathrm{Exp}(\lambda^{i},t)\right\} ,
\]
where $\lambda=\varepsilon^{i}(\lambda).$ The corresponding Maxwell
strata in the image of the exponential mapping are defined as:

\[
\mathrm{Max}^{i}=\mathrm{Exp}(\mathrm{MAX}^{i})\subset M.
\]
 General description of the Maxwell strata is then given as:
\begin{flalign*}
(1)\qquad\nu\in\mathrm{MAX}^{1} & \Leftrightarrow\left\{ \begin{array}{ccc}
R_{1}(q)=0, & \mathrm{cn}\tau\neq0, & \textrm{ for}\,\lambda\in C_{1}\qquad\\
R_{1}(q)=0, &  & \textrm{for}\,\lambda\in C_{2}\cup C_{3}
\end{array}\right\} , & {}\\
(2)\qquad\nu\in\mathrm{MAX}^{2} & \Leftrightarrow\left\{ \begin{array}{ccc}
z=0, & \mathrm{sn}\tau\neq0,\quad\,\, & \textrm{for }\lambda\in C_{1}\cup C_{2}\\
z=0, & \tau\neq0,\quad\quad\, & \textrm{for }\lambda\in C_{3}\qquad
\end{array}\right\} , & {}\\
(3)\qquad\nu\in\mathrm{MAX}^{6} & \Leftrightarrow\left\{ \begin{array}{ccc}
R_{2}(q)=0, & \mathrm{cn}\tau\neq0, & \textrm{for }\lambda\in C_{2}\qquad\\
R_{2}(q)=0, &  & \textrm{for }\lambda\in C_{1}\cup C_{3}
\end{array}\right\} , & {}
\end{flalign*}
where 
\begin{eqnarray}
\tau & = & \frac{1}{2}\left(\varphi_{t}+\varphi\right),\, p=\frac{t}{2}\textrm{ when }\nu=(\lambda,t)\in N_{1}\cup N_{3},\label{eq:2.17}\\
\tau & = & \frac{1}{2k}\left(\varphi_{t}+\varphi\right),\, p=\frac{t}{2k}\textrm{ when }\nu=(\lambda,t)\in N_{2}.\label{eq:2.18}
\end{eqnarray}

\section{Complete Description of the Maxwell Strata}

\subsection{Roots of Equations $R_{i}(q_{t})=0$ and $z_{t}=0$}

We now study roots of the equations $R_{i}(q_{t})=0$ and $z_{t}=0$
to describe the Maxwell strata in the sub-Riemannian problem on $\mathrm{SH}(2)$.
The idea is to obtain a parametrization of the roots in terms of $\tau$
and $p$ defined in (\ref{eq:2.17})--(\ref{eq:2.18}). Using the
addition formulas for Jacobi elliptic functions we get the following
representation of the functions along the extremal trajectories:

\textbf{Case 1 - $\lambda\in C_{1}$:}
\begin{align}
\varphi_{t} & =\tau+p,\quad\varphi=\tau-p,\label{eq:3.1}\\
\sinh z_{t} & =s_{1}\frac{2k\,\mathrm{sn}p\,\mathrm{sn}\tau}{\Delta},\label{eq:3.2}\\
\sinh\frac{z_{t}}{2} & =s_{1}\frac{k\,\mathrm{sn}p\,\mathrm{sn}\tau}{\sqrt{\Delta}},\label{eq:3.3}\\
\cosh\frac{z_{t}}{2} & =\frac{1}{\sqrt{\Delta}},\label{eq:3.4}\\
R_{1}(q_{t}) & =\frac{2k}{1-k^{2}}\mathrm{cn}\tau\, f_{1}(p),\label{eq:3.5}\\
R_{2}(q_{t}) & =\frac{2s_{1}}{1-k^{2}}\mathrm{dn}\tau\, f_{2}(p),\label{eq:3.6}
\end{align}
where $\Delta=1-k^{2}\mathrm{sn}^{2}p\,\mathrm{sn}^{2}\tau$, $f_{1}(p)=\mathrm{cn}p\,\mathrm{E}(p)-\mathrm{sn}p\,\mathrm{dn}p$
and $f_{2}(p)=\mathrm{dn}p\,\mathrm{E}(p)-k^{2}\mathrm{sn}p\,\mathrm{cn}p$.

\textbf{Case 2 - $\lambda\in C_{2}$:}
\begin{align}
\frac{\varphi_{t}}{k} & =\tau+p,\quad\frac{\varphi}{k}=\tau-p,\label{eq:3.7}\\
\sinh z_{t} & =s_{2}\frac{2k\,\mathrm{sn}p\,\mathrm{sn}\tau}{\Delta},\label{eq:3.8}\\
\sinh\frac{z_{t}}{2} & =s_{2}\frac{k\,\mathrm{sn}p\,\mathrm{sn}\tau}{\sqrt{\Delta}},\label{eq:3.9}\\
\cosh\frac{z_{t}}{2} & =\frac{1}{\sqrt{\Delta}},\label{eq:3.10}\\
R_{1}(q_{t}) & =\frac{2s_{2}}{1-k^{2}}\mathrm{dn}\tau\, f_{3}(p),\label{eq:46}\\
R_{2}(q_{t}) & =\frac{2k}{1-k^{2}}\mathrm{cn}\tau\, f_{4}(p),\label{eq:47}
\end{align}
where $f_{3}(p)=-\mathrm{dn}p\mathrm{\, E}(p)+p\mathrm{\, dn}p(1-k^{2})+k^{2}\mathrm{sn}p\,\mathrm{cn}p$
and $f_{4}(p)=-\mathrm{cn}p\,\mathrm{E}(p)+p\,\mathrm{cn}p(1-k^{2})+\mathrm{sn}p\,\mathrm{dn}p$. 

\textbf{Case 3 - $\lambda\in C_{3}$:}
\begin{align}
\varphi_{t} & =\tau+p,\quad\varphi=\tau-p,\label{eq:3.13}\\
\sinh z & =2s_{1}s_{2}\frac{\sinh(\tau)\sinh(p)\cosh(\tau)\cosh(p)}{\Delta},\label{eq:3.14}\\
\sinh\frac{z_{t}}{2} & =s_{1}s_{2}\frac{\sinh(\tau)\sinh(p)}{\sqrt{\Delta}},\label{eq:3.15}\\
\cosh\frac{z_{t}}{2} & =\frac{\cosh(\tau)\cosh(p)}{\sqrt{\Delta}},\label{eq:3.16}\\
R_{1}(q_{t}) & =s_{2}\frac{2p-\sinh2p}{2\sqrt{\Delta}},\label{eq:3.17}\\
R_{2}(q_{t}) & =s_{1}\frac{2p+\sinh2p}{2\sqrt{\Delta}},\label{eq:3.18}
\end{align}
where $\Delta=\cosh^{2}\tau+\sinh^{2}p$.
\begin{proposition}
\label{prop:3.1}Let $t>0$.
\begin{flalign*}
(1)\qquad\mathrm{If}\,\lambda\in C_{1}\quad & \mathrm{then}\quad z_{t}=0\quad\Longleftrightarrow\quad p=2Kn,\quad\mathrm{sn}\tau=0.\\
(2)\qquad\mathrm{If}\,\lambda\in C_{2}\quad & \mathrm{then}\quad z_{t}=0\quad\Longleftrightarrow\quad p=2Kn,\quad\mathrm{sn}\tau=0.\\
(3)\qquad\mathrm{If}\,\lambda\in C_{3}\quad & \mathrm{then}\quad z_{t}=0\quad\Longleftrightarrow\quad p=0,\quad\tau=0. & {}
\end{flalign*}
\end{proposition}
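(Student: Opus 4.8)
The plan is to prove each of the three equivalences by reading off the vanishing condition directly from the explicit formulas for $\sinh\frac{z_t}{2}$ and $\cosh\frac{z_t}{2}$ recorded in (\ref{eq:3.3})--(\ref{eq:3.4}), (\ref{eq:3.9})--(\ref{eq:3.10}) and (\ref{eq:3.15})--(\ref{eq:3.16}). Since $z_t=0$ is equivalent to $\sinh\frac{z_t}{2}=0$ (while $\cosh\frac{z_t}{2}\neq0$ is automatic), the whole problem reduces to locating the zeros of the numerators in those expressions, given that the denominator $\sqrt{\Delta}$ is everywhere positive (one should note $\Delta>0$ in each case: in Cases 1 and 2 because $k^2\,\mathrm{sn}^2 p\,\mathrm{sn}^2\tau\le k^2<1$, and in Case 3 because $\Delta=\cosh^2\tau+\sinh^2 p\ge1$).

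For parts (1) and (2), from (\ref{eq:3.3}) respectively (\ref{eq:3.9}) we have $\sinh\frac{z_t}{2}=s\,\dfrac{k\,\mathrm{sn}p\,\mathrm{sn}\tau}{\sqrt{\Delta}}$ with $s=\pm1$ and $k\neq0$ (recall $k\in(0,1)$ on $C_1\cup C_2$). Hence $z_t=0\iff\mathrm{sn}p=0$ or $\mathrm{sn}\tau=0$. Now $\mathrm{sn}p=0\iff p=2Kn$ for some integer $n$ (using that $\mathrm{sn}$ is odd with real period $4K$ and vanishes exactly at the integer multiples of $2K$). I would then observe that the case $\mathrm{sn}p=0$ forces $p=2Kn$, and conversely when $p=2Kn$ one also needs to track what happens to $\mathrm{sn}\tau$; but since the statement lists "$p=2Kn,\ \mathrm{sn}\tau=0$" as a conjunction describing the root set, I would present it as: the zero locus is $\{p=2Kn\}\cup\{\mathrm{sn}\tau=0\}$, and argue (using $\tau\pm p$ being the genuine elliptic arguments $\varphi_t,\varphi$, via (\ref{eq:3.1}), (\ref{eq:3.7})) that on the relevant extremal the two conditions coincide or that only their conjunction is attained — this bookkeeping about which branch $(\tau,p)$ ranges over is the one genuinely delicate point. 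For part (3), from (\ref{eq:3.15}) we get $\sinh\frac{z_t}{2}=s_1 s_2\,\dfrac{\sinh\tau\,\sinh p}{\sqrt{\Delta}}$, so $z_t=0\iff\sinh\tau=0$ or $\sinh p=0\iff\tau=0$ or $p=0$; since $t>0$ rules out the trivial extremal and forces $\varphi_t\neq\varphi$, i.e. $p\neq0$ would be inconsistent only together with $\tau$... again one checks that the surviving possibility is exactly $p=0$ and $\tau=0$, i.e. the origin, which is excluded for a nonconstant geodesic — or rather, that this is the precise characterization of where $z_t$ returns to $0$.

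Concretely I would organize it as three short paragraphs, each: (i) cite the formula for $\sinh\frac{z_t}{2}$; (ii) note the denominator is nonzero; (iii) set the numerator to zero and solve, invoking the standard fact $\mathrm{sn}p=0\Leftrightarrow p\in 2K\mathbb{Z}$ in Cases 1--2 and $\sinh=0\Leftrightarrow$ argument $=0$ in Case 3; (iv) use the translation relations (\ref{eq:3.1}), (\ref{eq:3.7}), (\ref{eq:3.13}) together with $t>0$ to pin down the admissible solutions and discard spurious ones. The main obstacle is not any computation — the formulas are already in hand — but the careful justification that the listed conditions are exactly the root set: in particular, verifying that the factor $\mathrm{sn}\tau$ (resp.\ $\sinh\tau$) genuinely contributes to the zero locus and is not excluded by the range of $\tau$ along the extremal, and conversely that no further zeros are hidden in the interplay between $p$ and $\tau$. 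This is a matter of correctly using the definitions (\ref{eq:2.17})--(\ref{eq:2.18}) of $\tau$ and $p$ and the periodicity of the elliptic functions, and I would keep that discussion explicit but brief.
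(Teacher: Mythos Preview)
Your approach is correct and essentially identical to the paper's: the paper's entire proof reads ``Item (1) follows from (\ref{eq:3.2}), item (2) from (\ref{eq:3.8}) and item (3) from (\ref{eq:3.14}),'' i.e.\ it simply cites the factorizations of $\sinh z_t$ (you use the equivalent $\sinh\frac{z_t}{2}$) and reads off the zeros.

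The one real problem in your write-up is a misreading of the statement. The comma in ``$p=2Kn,\ \mathrm{sn}\tau=0$'' is a disjunction, not a conjunction: the proposition asserts exactly the union $\{p\in 2K\mathbb{Z}\}\cup\{\mathrm{sn}\tau=0\}$ that you yourself derive. There is therefore nothing ``genuinely delicate'' to check, no need to argue that ``the two conditions coincide or that only their conjunction is attained,'' and no hidden bookkeeping with (\ref{eq:2.17})--(\ref{eq:2.18}). The same remark applies to part~(3): the statement means $p=0$ \emph{or} $\tau=0$; since $p=t/2>0$ the first alternative is vacuous and only $\tau=0$ survives, but you are not being asked to prove $p=0$ \emph{and} $\tau=0$. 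Once you drop that misreading, your three short paragraphs (formula, nonvanishing denominator, numerator zeros via $\mathrm{sn}p=0\Leftrightarrow p\in 2K\mathbb{Z}$ and $\sinh=0\Leftrightarrow$ argument $=0$) are exactly right and match the paper.
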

\begin{proof}
Item (1) follows from (\ref{eq:3.2}), item (2) from (\ref{eq:3.8})
and item (3) from (\ref{eq:3.14}).\hfill$\square$\end{proof}

\begin{proposition}
\label{prop:f1p}The function $f_{1}(p)$ has an infinite number of
roots for any $k\in[0,1)$ given as:
\begin{align}
p & =p_{1}^{n}(k),\quad n\in\mathbb{Z},\label{eq:3.19}\\
p_{1}^{0} & =0,\label{eq:3.20}\\
p_{1}^{-n}(k) & =-p_{1}^{n}(k).\label{eq:3.21}
\end{align}
Moreover, the positive roots admit the bound:
\begin{equation}
p_{1}^{n}(k)\in\left(2nK\,,\,(2n+1)K\right),\quad n\in\mathbb{N},\quad k\in(0,1).\label{eq:3.22}
\end{equation}
\end{proposition}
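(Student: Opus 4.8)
The plan is to control $f_1$ through its derivative, so that the whole statement reduces to a monotonicity count on the half-line $p>0$ together with the parity of the Jacobi functions. Since $\mathrm{sn}$ and $\mathrm{E}(\cdot)$ are odd while $\mathrm{cn}$ and $\mathrm{dn}$ are even, the function $f_1(p)=\mathrm{cn}\,p\,\mathrm{E}(p)-\mathrm{sn}\,p\,\mathrm{dn}\,p$ is odd; this already gives $f_1(0)=0$ and $p_1^{-n}(k)=-p_1^n(k)$, i.e.\ (\ref{eq:3.20})--(\ref{eq:3.21}), and reduces the task to locating the positive roots.

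Differentiating with the standard formulas for $\mathrm{sn}',\mathrm{cn}',\mathrm{dn}',\mathrm{E}'$ one obtains the clean identity
\[
f_1'(p)=-\,\mathrm{sn}\,p\cdot f_2(p),\qquad f_2(p)=\mathrm{dn}\,p\,\mathrm{E}(p)-k^2\mathrm{sn}\,p\,\mathrm{cn}\,p ,
\]
with $f_2$ the same function that appears in (\ref{eq:3.6}), so $\mathrm{sgn}\,f_1'$ is controlled by the (classical) sign of $\mathrm{sn}\,p$ and by the sign of $f_2$. For the sign of $f_2$ the step I would take is to differentiate the ratio $f_2/\mathrm{dn}$ rather than $f_2$ itself: using $\mathrm{cn}^2=1-\mathrm{sn}^2$, $\mathrm{dn}^2=1-k^2\mathrm{sn}^2$ and $k^{\prime2}=1-k^2$, this collapses to
\[
\frac{d}{dp}\!\left(\frac{f_2(p)}{\mathrm{dn}\,p}\right)=\frac{k^{\prime2}}{\mathrm{dn}^{2}\,p}\;>\;0 ,
\]
and since this ratio vanishes at $p=0$ and $\mathrm{dn}\,p>0$ for $k\in[0,1)$, it follows that $\mathrm{sgn}\,f_2(p)=\mathrm{sgn}\,p$. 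Hence, for $p>0$, the factor $f_2(p)$ is positive and $f_1$ is strictly monotone on each interval $(2nK,2(n+1)K)$, $n\ge0$, decreasing where $\mathrm{sn}$ is positive and increasing where $\mathrm{sn}$ is negative; in particular all its roots are simple, which justifies the notation $p_1^n(k)$.

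To turn this monotonicity into a root count I would evaluate $f_1$ at the grid points, using $\mathrm{cn}(p+2K)=-\mathrm{cn}\,p$, $\mathrm{sn}(p+2K)=-\mathrm{sn}\,p$, $\mathrm{dn}(p+2K)=\mathrm{dn}\,p$, $\mathrm{E}(p+2K)=\mathrm{E}(p)+2\mathrm{E}(K)$ and $\mathrm{cn}\,K=0$, $\mathrm{sn}\,K=1$, $\mathrm{dn}\,K=k'$:
\[
f_1(2nK)=(-1)^{n}\,2n\,\mathrm{E}(K),\qquad f_1((2n+1)K)=(-1)^{n+1}k' .
\]
On $(0,2K)$, $f_1$ decreases from $0$ to $-2\mathrm{E}(K)$, so there is no positive root there. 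On each $(2nK,2(n+1)K)$ with $n\ge1$, $f_1$ is strictly monotone and its values at the endpoints, $(-1)^{n}2n\,\mathrm{E}(K)$ and $(-1)^{n+1}2(n+1)\,\mathrm{E}(K)$, have opposite signs, so it has exactly one root $p_1^{n}(k)$ there; and since $f_1((2n+1)K)=(-1)^{n+1}k'$ already carries the sign that $f_1$ has at the right endpoint, the sign change occurs before $(2n+1)K$, which forces $p_1^{n}(k)\in(2nK,(2n+1)K)$. That is (\ref{eq:3.22}), and since these exhaust the positive roots, together with $p_1^0=0$ and oddness we obtain the full list (\ref{eq:3.19}).

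The only genuinely non-routine step is the sign of $f_2$: recognizing the auxiliary function $f_2/\mathrm{dn}$, whose derivative is manifestly positive, is the crux, and everything after it is monotonicity plus counting endpoint signs. The degenerate value $k=0$, where $f_1(p)=p\cos p-\sin p$ and the positive roots are the solutions of $\tan p=p$, is handled verbatim by the same argument and lies outside the range of the bound (\ref{eq:3.22}) anyway.
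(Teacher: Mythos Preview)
Your proof is correct but takes a genuinely different route from the paper's. The paper works with the quotient $g_1(p)=f_1(p)/\mathrm{cn}\,p$: a direct computation gives $g_1'(p)=-(1-k^2)\,\mathrm{sn}^2 p/\mathrm{cn}^2 p\le 0$, so $g_1$ is strictly decreasing on each interval $((2n-1)K,(2n+1)K)$ and tends to $\pm\infty$ at its poles, forcing exactly one root in each such interval; since $g_1(2nK)=2n\,\mathrm{E}(K)>0$ for $n\in\mathbb N$, that root lies in $(2nK,(2n+1)K)$. You instead factor the derivative as $f_1'(p)=-\mathrm{sn}\,p\cdot f_2(p)$ and establish $f_2>0$ on $(0,\infty)$ via the ratio $f_2/\mathrm{dn}$; this is precisely Lemma~\ref{lem:f2p}, which the paper proves separately (and by the same differentiation trick) immediately after the present proposition. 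So your argument effectively folds that lemma into this proof, giving as dividends the identity $f_1'=-\mathrm{sn}\cdot f_2$ and the simplicity of the roots, neither of which the paper records explicitly. The paper's version is more self-contained and a bit shorter, since $g_1'$ collapses directly without invoking a second auxiliary function; your version reveals a structural link between $f_1$ and $f_2$ that the paper leaves implicit.
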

\begin{proof}
Equalities (\ref{eq:3.20})--(\ref{eq:3.21}) follow directly from
the fact that $f_{1}(p)$ is odd.

To prove (\ref{eq:3.22}) consider the function $g_{1}(p)=f_{1}(p)/\mathrm{cn}p$,
which has the same roots as $f_{1}(p)$ and also:

\begin{align*}
\lim_{p\rightarrow(2n-1)K+}g_{1}(p) & \rightarrow+\infty,\\
\lim_{p\rightarrow(2n+1)K-}g_{1}(p) & \rightarrow-\infty,\\
g_{1}^{\prime}(p) & =-\frac{(1-k^{2})\mathrm{sn}^{2}p}{\mathrm{cn^{2}p}}\leq0.
\end{align*}
Hence $g_{1}(p)$ is decreasing on the interval $((2n-1)K\,,\,(2n+1)K)$
approaching $\pm\infty$ on the boundaries of the interval. It follows
that $g_{1}(p)$ and therefore $f_{1}(p)$ admit a unique root $p=p_{1}^{n}(k)$
in each interval $((2n-1)K\,,\,(2n+1)K)$. Since $g_{1}(2nK)>0,$
for $n\in\mathbb{N},$ therefore $p_{1}^{n}(k)\in(2nK,(2n+1)K).$
Plots of the functions $f_{1}(p)$ and $g_{1}(p)$ for $k=0.9$ are
given in Figure \ref{fig:f1g1}. \hfill$\square$
\begin{figure}
\centering{}\includegraphics[scale=0.5]{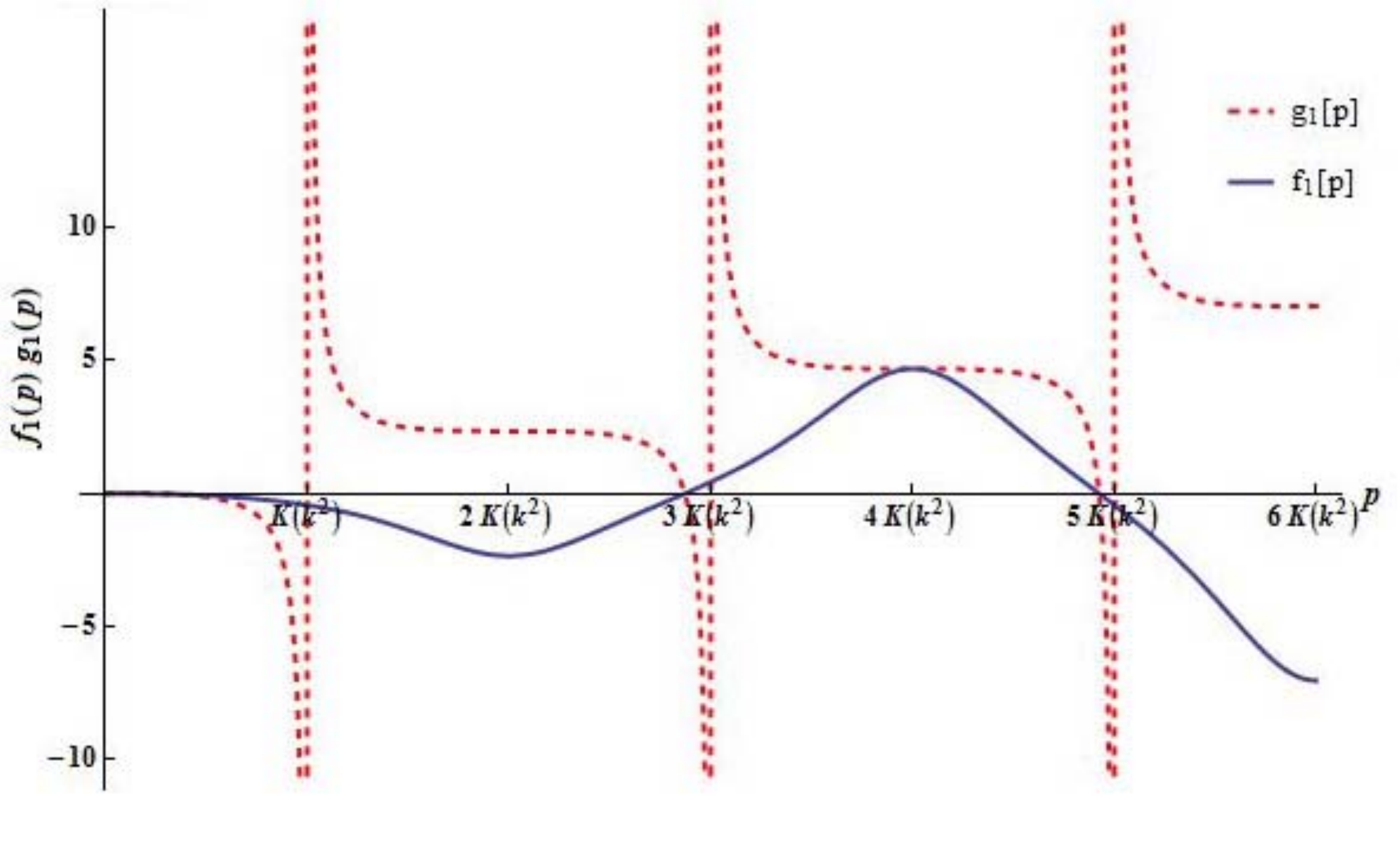}\protect\caption{\label{fig:f1g1}Roots of the functions $f_{1}(p)$ and $g_{1}(p)$}
\end{figure}
\end{proof}

\begin{lemma}
\label{lem:f2p}The function $f_{2}(p)$ is positive for any $p>0$
and $k\in(0,1).$\end{lemma}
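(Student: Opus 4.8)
The plan is to reuse the device from Proposition~\ref{prop:f1p}: divide $f_{2}$ by a nowhere-vanishing Jacobi function so that the quotient has a sign-definite derivative. Since $\mathrm{dn}\,p\in[\sqrt{1-k^{2}}\,,\,1]$ and hence $\mathrm{dn}\,p>0$ for all $p\in\mathbb{R}$ and all $k\in(0,1)$, the inequality $f_{2}(p)>0$ is equivalent to $g_{2}(p)>0$, where
\[
g_{2}(p)=\frac{f_{2}(p)}{\mathrm{dn}\,p}=\mathrm{E}(p)-k^{2}\,\frac{\mathrm{sn}\,p\,\mathrm{cn}\,p}{\mathrm{dn}\,p}.
\]
One checks at once that $g_{2}(0)=\mathrm{E}(0)=0$, so it suffices to show that $g_{2}$ is strictly increasing on $(0,\infty)$ --- in fact on all of $\mathbb{R}$.

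The next step is to differentiate $g_{2}$, using $\mathrm{sn}'=\mathrm{cn}\,\mathrm{dn}$, $\mathrm{cn}'=-\mathrm{sn}\,\mathrm{dn}$, $\mathrm{dn}'=-k^{2}\mathrm{sn}\,\mathrm{cn}$ and $\mathrm{E}'(p)=\mathrm{dn}^{2}p$, and to collect the result over the common denominator $\mathrm{dn}^{2}p$. After expanding $\frac{d}{dp}\bigl(\mathrm{sn}\,p\,\mathrm{cn}\,p/\mathrm{dn}\,p\bigr)$ and substituting the Jacobi identities $\mathrm{cn}^{2}p=1-\mathrm{sn}^{2}p$ and $\mathrm{dn}^{2}p=1-k^{2}\mathrm{sn}^{2}p$ (equivalently $\mathrm{dn}^{2}p-k^{2}\mathrm{cn}^{2}p=1-k^{2}$, together with $k^{2}+(1-k^{2})=1$), all terms involving $\mathrm{sn}\,p$ cancel and the numerator collapses to the constant $1-k^{2}$, so that
\[
g_{2}'(p)=\frac{1-k^{2}}{\mathrm{dn}^{2}p}>0.
\]
This cancellation is the only computation of substance, so I would carry it out in full rather than merely sketch it, and I would sanity-check it in the limit $k\to0$, where $f_{2}(p)=p$, $g_{2}(p)=p$ and indeed $g_{2}'(p)\equiv1$.

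Combining $g_{2}(0)=0$ with $g_{2}'>0$ gives $g_{2}(p)>0$ for every $p>0$, hence $f_{2}(p)=\mathrm{dn}\,p\cdot g_{2}(p)>0$ for $p>0$ and $k\in(0,1)$, which is the assertion. I do not expect any conceptual obstacle: the statement reduces to a one-line monotonicity argument once the quotient $g_{2}=f_{2}/\mathrm{dn}$ is chosen, and the only thing that can go wrong is an arithmetic slip in the derivative simplification. Note that, in contrast with $f_{1}$, there is no need to split $\mathbb{R}$ into intervals bounded by the zeros of an elliptic function, precisely because $\mathrm{dn}$ never vanishes --- this is the structural reason $f_{2}$ is sign-definite while $f_{1}$ oscillates.
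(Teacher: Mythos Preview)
Your proposal is correct and follows exactly the paper's approach: define $g_{2}(p)=f_{2}(p)/\mathrm{dn}\,p$, compute $g_{2}'(p)=\dfrac{1-k^{2}}{\mathrm{dn}^{2}p}>0$, and use $g_{2}(0)=0$ to conclude. Your write-up is in fact more detailed than the paper's, which states the derivative formula without the intermediate simplification and does not explicitly remark that $\mathrm{dn}\,p>0$.
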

\begin{proof}
Consider the function $g_{2}(p)=f_{2}(p)/\mathrm{dn}p$ where
\begin{align*}
g_{2}^{\prime}(p) & =\frac{1-k^{2}}{\mathrm{dn}^{2}p}>0.
\end{align*}
Since $g_{2}(0)=0$ therefore $g_{2}(p)>0$ and $f_{2}(p)>0$ for
$p>0$. \hfill$\square$
\end{proof}

\begin{lemma}
\label{lem:f3p}The function $f_{3}(p)$ is negative for any $p>0$
and $k\in(0,1).$\end{lemma}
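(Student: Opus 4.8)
The plan is to follow the same template used for Lemma~\ref{lem:f2p} and Proposition~\ref{prop:f1p}: divide out the nowhere-vanishing factor $\mathrm{dn}\,p$ and study the monotonicity of the resulting function. Concretely, set
\[
g_3(p)=\frac{f_3(p)}{\mathrm{dn}\,p}=-\mathrm{E}(p)+(1-k^2)\,p+k^2\,\frac{\mathrm{sn}\,p\,\mathrm{cn}\,p}{\mathrm{dn}\,p},
\]
which is well defined and has the same sign as $f_3(p)$ for all $p$, since $\mathrm{dn}\,p>0$ when $k\in(0,1)$. One checks immediately that $g_3(0)=0$, using $\mathrm{E}(0)=0$ and $\mathrm{sn}\,0=0$. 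It therefore suffices to show that $g_3$ is strictly decreasing on $(0,\infty)$, for then $g_3(p)<g_3(0)=0$ for $p>0$, and multiplying back by $\mathrm{dn}\,p>0$ gives $f_3(p)<0$.

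Next I would differentiate $g_3$ using the standard derivatives of the Jacobi functions, $\mathrm{E}'(p)=\mathrm{dn}^2 p$, $(\mathrm{sn}\,p)'=\mathrm{cn}\,p\,\mathrm{dn}\,p$, $(\mathrm{cn}\,p)'=-\mathrm{sn}\,p\,\mathrm{dn}\,p$, $(\mathrm{dn}\,p)'=-k^2\mathrm{sn}\,p\,\mathrm{cn}\,p$. The quotient rule applied to the last term yields
\[
g_3'(p)=-\mathrm{dn}^2 p+(1-k^2)+k^2\,\frac{\mathrm{dn}^2 p\,(\mathrm{cn}^2 p-\mathrm{sn}^2 p)+k^2\,\mathrm{sn}^2 p\,\mathrm{cn}^2 p}{\mathrm{dn}^2 p}.
\]
The one slightly laborious (but entirely routine) step is the simplification of this expression: clearing the denominator $\mathrm{dn}^2 p$ and substituting $\mathrm{dn}^2 p=1-k^2\mathrm{sn}^2 p$ and $\mathrm{cn}^2 p=1-\mathrm{sn}^2 p$, I expect everything to collapse to
\[
g_3'(p)=-\,\frac{k^2(1-k^2)\,\mathrm{sn}^2 p}{\mathrm{dn}^2 p}\le 0,
\]
with equality only at the isolated points $p=2Kn$. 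Hence $g_3$ is strictly decreasing on $(0,\infty)$, which completes the argument as described above.

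I do not anticipate a genuine obstacle here; the statement is of the same flavour as Lemma~\ref{lem:f2p}, only with a marginally busier derivative. The two points that deserve care are: (i) the bookkeeping in reducing $g_3'(p)$ to the single monomial $-k^2(1-k^2)\mathrm{sn}^2 p/\mathrm{dn}^2 p$, which should be written out in full; and (ii) the passage from $g_3'\le 0$ to strict monotonicity, which is justified by noting that $\mathrm{sn}^2 p$ vanishes only on the discrete set $\{2Kn\}$, so $g_3$ is strictly decreasing between consecutive zeros of $\mathrm{sn}\,p$ and hence on all of $(0,\infty)$. As in the preceding proofs, it may also be worth including a plot of $f_3$ (and $g_3$) for a sample value of $k$ to illustrate the conclusion.
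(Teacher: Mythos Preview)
Your proposal is correct and follows exactly the same approach as the paper: define $g_{3}(p)=f_{3}(p)/\mathrm{dn}\,p$, compute $g_{3}'(p)=-\dfrac{k^{2}(1-k^{2})\,\mathrm{sn}^{2}p}{\mathrm{dn}^{2}p}\le 0$, and use $g_{3}(0)=0$ to conclude $f_{3}(p)<0$ for $p>0$. Your write-up is in fact more detailed than the paper's (which omits the derivative simplification and the strict-monotonicity remark), but the argument is the same.
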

\begin{proof}
Consider the function $g_{3}(p)=f_{3}(p)/\mathrm{dn}p$ which has
the same roots as $f_{3}(p)$ such that

\begin{align*}
g_{3}^{\prime}(p) & =-\frac{(1-k^{2})k^{2}\mathrm{sn}^{2}p}{\mathrm{dn}^{2}p}\leq0.
\end{align*}
Since $g_{3}(0)=0$ therefore $g_{3}(p)<0$ and $f_{2}(p)<0$ for
$p>0$. \hfill$\square$\end{proof}

\begin{proposition}
\label{prop:f4p}The function $f_{4}(p)$ has an infinite number of
roots for any $k\in[0,1)$ given as:

\begin{align}
p & =p_{2}^{n}(k),\qquad n\in\mathbb{Z},\label{eq:3.23}\\
p_{2}^{0} & =0,\label{eq:3.24}\\
p_{2}^{-n}(k) & =-p_{2}^{n}(k).\label{eq:3.25}
\end{align}
Moreover, the positive roots admit the bound:
\begin{equation}
p_{2}^{n}(k)\in\left(2nK\,,\,(2n+1)K\right),\quad n\in\mathbb{N},\quad k\in(0,1).\label{eq:3.26}
\end{equation}
\end{proposition}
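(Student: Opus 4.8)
The plan is to adapt, essentially verbatim, the argument given for $f_{1}(p)$ in Proposition~\ref{prop:f1p}. First I would observe that $f_{4}$ is odd: $\mathrm{cn}\,p$ and $\mathrm{dn}\,p$ are even while $\mathrm{sn}\,p$, $p$ and $\mathrm{E}(p)=\int_{0}^{p}\mathrm{dn}^{2}$ are odd, so each of the three terms of $f_{4}$ is odd and $f_{4}(-p)=-f_{4}(p)$. This immediately gives $p_{2}^{0}=0$ and $p_{2}^{-n}(k)=-p_{2}^{n}(k)$, i.e.\ (\ref{eq:3.24})--(\ref{eq:3.25}), and reduces the problem to locating the positive roots.

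Next I would introduce $g_{4}(p)=f_{4}(p)/\mathrm{cn}\,p=-\mathrm{E}(p)+(1-k^{2})p+\mathrm{sn}\,p\,\mathrm{dn}\,p/\mathrm{cn}\,p$. It has the same roots as $f_{4}$ on each interval $((2n-1)K,(2n+1)K)$, since at the excluded endpoints $p=(2n+1)K$ one has $f_{4}=\mathrm{sn}\,p\,\mathrm{dn}\,p\neq0$, so no root is lost. Differentiating with $\mathrm{sn}'=\mathrm{cn}\,\mathrm{dn}$, $\mathrm{cn}'=-\mathrm{sn}\,\mathrm{dn}$, $\mathrm{dn}'=-k^{2}\mathrm{sn}\,\mathrm{cn}$, $\mathrm{E}'=\mathrm{dn}^{2}$, and collapsing the result with $\mathrm{sn}^{2}+\mathrm{cn}^{2}=1$ and $\mathrm{dn}^{2}+k^{2}\mathrm{sn}^{2}=1$, I expect to obtain
\[
g_{4}'(p)=\frac{1-k^{2}}{\mathrm{cn}^{2}p}>0 ,
\]
so $g_{4}$ is strictly increasing on $((2n-1)K,(2n+1)K)$. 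Checking the sign of the term $\mathrm{sn}\,p\,\mathrm{dn}\,p/\mathrm{cn}\,p$ near the endpoints — where $\mathrm{sn}\,\mathrm{dn}$ stays bounded away from $0$ while $\mathrm{cn}$ changes sign — gives $\lim_{p\to(2n-1)K+}g_{4}(p)=-\infty$ and $\lim_{p\to(2n+1)K-}g_{4}(p)=+\infty$. Hence $g_{4}$, and therefore $f_{4}$, has exactly one root $p_{2}^{n}(k)$ in each such interval; this proves (\ref{eq:3.23}) and the infinitude of the root set.

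To sharpen this to the bound (\ref{eq:3.26}) I would evaluate $g_{4}$ at the midpoint $p=2nK$. Writing $K$ and $E$ for the complete elliptic integrals of the first and second kinds, we have $\mathrm{sn}(2nK)=0$ (so the last term of $g_{4}$ vanishes) and $\mathrm{E}(2nK)=2nE$, whence $g_{4}(2nK)=-2nE+(1-k^{2})\,2nK=2n\bigl((1-k^{2})K-E\bigr)$. The required inequality $(1-k^{2})K<E$ for $k\in(0,1)$ follows from
\[
E-(1-k^{2})K=\int_{0}^{\pi/2}\frac{k^{2}\cos^{2}\theta}{\sqrt{1-k^{2}\sin^{2}\theta}}\,d\theta>0 .
\]
Thus $g_{4}(2nK)<0$ for $n\in\mathbb{N}$; since $g_{4}$ is increasing on $((2n-1)K,(2n+1)K)$ with limit $+\infty$ at the right endpoint, its unique zero must lie in $(2nK,(2n+1)K)$, which is (\ref{eq:3.26}).

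I expect the only mildly delicate steps to be the derivative simplification — telescoping the elliptic identities so that the bulky expression really reduces to $(1-k^{2})/\mathrm{cn}^{2}p$ — and fixing the signs in the one-sided limits at $(2n\pm1)K$, where the parity of $n$ controls the sign of $\mathrm{cn}$ on the interval; everything else is a copy of the reasoning already used for $f_{1}$ and $f_{2}$. As a consistency check, at $k=0$ one has $f_{4}(p)=\sin p$ with roots exactly at $p=2nK=n\pi$, which explains why existence is claimed for $k\in[0,1)$ whereas the strict bound (\ref{eq:3.26}) is asserted only for $k\in(0,1)$.
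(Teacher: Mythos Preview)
Your proposal is correct and follows exactly the paper's approach: reduce to the auxiliary function $g_{4}(p)=f_{4}(p)/\mathrm{cn}\,p$, use $g_{4}'(p)=(1-k^{2})/\mathrm{cn}^{2}p>0$ together with the one-sided limits $\mp\infty$ at $(2n\mp1)K$ to get one root per interval, and then locate it in $(2nK,(2n+1)K)$ by checking $g_{4}(2nK)<0$. You are in fact more explicit than the paper, which merely asserts the derivative, limits and the midpoint sign by analogy with Proposition~\ref{prop:f1p}; your integral verification of $E>(1-k^{2})K$ is a perfectly good alternative to the paper's later argument (in Lemma~\ref{lem:4.4}) via $f(k)=E-(1-k^{2})K$, $f(0)=0$, $f'(k)=kK>0$.
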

\begin{proof}
Equalities (\ref{eq:3.24})--(\ref{eq:3.25}) follow directly from
the fact that $f_{4}(p)$ is odd.

To prove (\ref{eq:3.26}) consider the function $g_{4}(p)=f_{4}(p)/\mathrm{cn}p$
which has the same roots as $f_{4}(p)$ and also:

\begin{align*}
\lim_{p\rightarrow(2n-1)K+}g_{4}(p) & \rightarrow-\infty,\\
\lim_{p\rightarrow(2n+1)K-}g_{4}(p) & \rightarrow+\infty,\\
g_{4}^{\prime}(p) & =\frac{1-k^{2}}{\mathrm{cn^{2}p}}>0.
\end{align*}
Hence $g_{4}(p)$ is increasing on the interval $((2n-1)K\,,\,(2n+1)K)$
approaching $\mp\infty$ on the boundary of the interval. It follows
that $g_{4}(p)$ and therefore $f_{4}(p)$ admit a unique root $p_{2}^{n}(k)$
on each such interval. Following an argument similar to the one in
Proposition \ref{prop:f1p}, it follows that $p_{2}^{n}(k)\in(2nK\,,\,(2n+1)K)$.
Plots of the functions $f_{4}(p)$ and $g_{4}(p)$ for $k=0.9$ are
given in Figure \ref{fig:f4g4}.\hfill$\square$
\begin{figure}
\centering{}\includegraphics[scale=0.5]{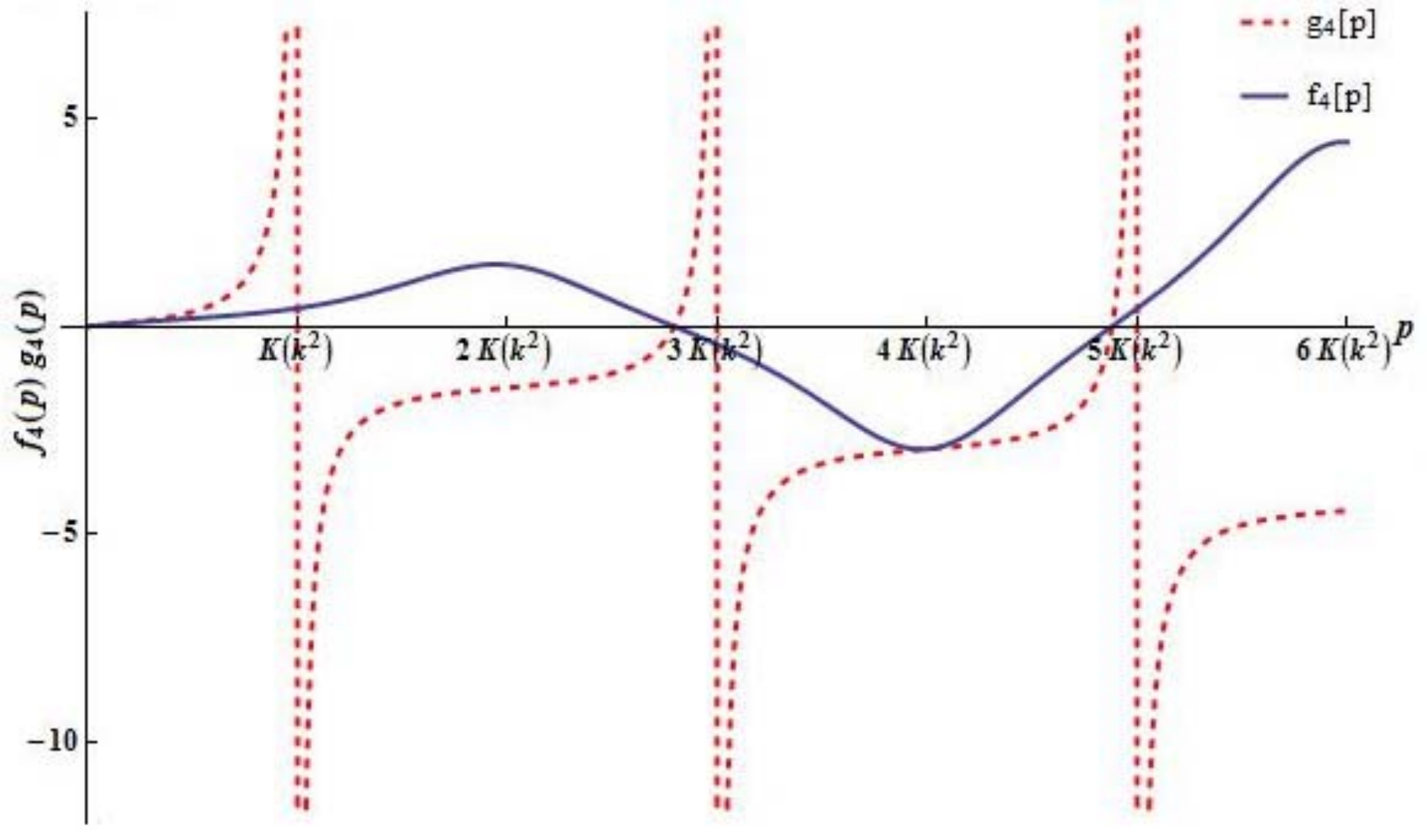}\protect\caption{\label{fig:f4g4}Roots of the functions $f_{4}(p)$ and $g_{4}(p)$}
\end{figure}
\end{proof}

\begin{proposition}
\label{prop:3.4}Let $t>0$.
\begin{flalign}
(1)\qquad\mathrm{If}\,\lambda\in C_{1}\quad & \mathrm{then}\quad R_{1}(q_{t})=0\quad\Longleftrightarrow\quad p=p_{1}^{n}(k)\textrm{ or }\mathrm{cn}\tau=0.\label{eq:55}\\
(2)\qquad\mathrm{If}\,\lambda\in C_{2}\quad & \mathrm{then}\quad R_{1}(q_{t})=0\quad\mathrm{is\, impossible}.\label{eq:56}\\
(3)\qquad\mathrm{If}\,\lambda\in C_{3}\quad & \mathrm{then}\quad R_{1}(q_{t})=0\quad\mathrm{is\, impossible}. & {}\label{eq:57}
\end{flalign}
\end{proposition}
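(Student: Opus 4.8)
The plan is to read each case directly off the closed-form representations of $R_1(q_t)$ derived above (equations (\ref{eq:3.5}), (\ref{eq:46}), (\ref{eq:3.17})), isolate the factor that is allowed to vanish, and invoke the root analysis of $f_1$ and the sign of $f_3$. First note that $t>0$ forces $p>0$ in every case, since $p=t/2$ on $N_1\cup N_3$ and $p=t/(2k)$ on $N_2$ by (\ref{eq:2.17})--(\ref{eq:2.18}); in particular the trivial root $p=0$ is excluded throughout.

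For item (1), with $\lambda\in C_1$ we have $k\in(0,1)$, so in $R_1(q_t)=\tfrac{2k}{1-k^2}\,\mathrm{cn}\,\tau\, f_1(p)$ the prefactor $\tfrac{2k}{1-k^2}$ never vanishes. Hence $R_1(q_t)=0$ is equivalent to $\mathrm{cn}\,\tau=0$ or $f_1(p)=0$, and by Proposition~\ref{prop:f1p} the equation $f_1(p)=0$ with $p>0$ holds exactly at the points $p=p_1^n(k)$, $n\in\mathbb{N}$. This is precisely the asserted equivalence (\ref{eq:55}).

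For item (2), use $R_1(q_t)=\tfrac{2s_2}{1-k^2}\,\mathrm{dn}\,\tau\, f_3(p)$ for $\lambda\in C_2$. Here $s_2=\mathrm{sgn}\,c\in\{\pm1\}$ is nonzero on $C_2$ (the pendulum rotates, so $c\neq0$), the factor $\mathrm{dn}\,\tau$ is bounded below by $\sqrt{1-k^2}>0$ and therefore never vanishes, and by Lemma~\ref{lem:f3p} we have $f_3(p)<0$ for all $p>0$. Consequently $R_1(q_t)\neq0$, giving (\ref{eq:56}). For item (3), from $R_1(q_t)=s_2\,\tfrac{2p-\sinh 2p}{2\sqrt{\Delta}}$ with $\Delta=\cosh^2\tau+\sinh^2 p>0$ and $s_2=\mathrm{sgn}\,c\neq0$ on $C_3$, the elementary inequality $\sinh x>x$ for $x>0$ yields $2p-\sinh 2p<0$, so again $R_1(q_t)\neq0$, which is (\ref{eq:57}).

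There is no serious obstacle here: the proof is pure bookkeeping of which prefactors can vanish ($k\neq0$ on $C_1$, $\mathrm{dn}\,\tau>0$, $s_2\neq0$ on $C_2\cup C_3$, $\sqrt{\Delta}>0$) together with the elementary estimate $\sinh x>x$. The substantive work has already been done, namely the addition-formula representations (\ref{eq:3.5}), (\ref{eq:46}), (\ref{eq:3.17}) and the root/sign analysis of $f_1$ and $f_3$; so the main care needed is simply to confirm that on each stratum the pendulum parameter indeed satisfies the stated nonvanishing conditions ($c\neq0$ on $C_2$ and $C_3$, $k\in(0,1)$ on $C_1$).
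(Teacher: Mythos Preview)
Your proof is correct and follows essentially the same route as the paper: each item is read off the factorizations (\ref{eq:3.5}), (\ref{eq:46}), (\ref{eq:3.17}) together with Proposition~\ref{prop:f1p} and Lemma~\ref{lem:f3p}, and for item~(3) your use of $\sinh x>x$ is equivalent to the paper's derivative argument $(2p-\sinh 2p)'=2-2\cosh 2p<0$. You are slightly more explicit than the paper in checking that the auxiliary factors ($k$, $s_2$, $\mathrm{dn}\,\tau$, $\sqrt{\Delta}$) are nonzero on the relevant strata, which is harmless.
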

\begin{proof}
Item (1) follows from (\ref{eq:3.5}) and Proposition \ref{prop:f1p}.
Item (2) is given from (\ref{eq:46}) and Lemma \ref{lem:f3p}. Item
(3) follows from (\ref{eq:3.17}) where $2p-\sinh2p=0$ for $p=0$
and $(2p-\sinh2p)^{\prime}=2-2\cosh2p<0$ for $p>0$. Hence $R_{1}(q_{t})$
does not admit any roots for $t>0$ in this case. \hfill$\square$\end{proof}

\begin{proposition}
\label{prop:3.5}Let $t>0$.
\begin{flalign}
(1)\qquad\mathrm{If}\,\lambda\in C_{1}\quad & \mathrm{then}\quad R_{2}(q_{t})=0\quad\mathrm{is\, impossible}.\label{eq:55-2}\\
(2)\qquad\mathrm{If}\,\lambda\in C_{2}\quad & \mathrm{then}\quad R_{2}(q_{t})=0,\quad\Longleftrightarrow\quad p=p_{2}^{n}(k)\textrm{ or }\mathrm{cn}\tau=0.\label{eq:56-1}\\
(3)\qquad\mathrm{If}\,\lambda\in C_{3}\quad & \mathrm{then}\quad R_{2}(q_{t})=0\quad\mathrm{is\, impossible}. & {}\label{eq:57-1}
\end{flalign}
\end{proposition}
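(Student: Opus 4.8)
\textbf{Proof proposal for Proposition~\ref{prop:3.5}.}
The plan is to mirror exactly the structure of the proof of Proposition~\ref{prop:3.4}, using the explicit formulas (\ref{eq:3.6}), (\ref{eq:47}), (\ref{eq:3.18}) for $R_{2}(q_{t})$ in the three cases, together with the sign/root information already established for the functions $f_{2}$, $f_{4}$ and the hyperbolic expression $2p+\sinh 2p$.

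For item (1), $\lambda\in C_{1}$, I would start from (\ref{eq:3.6}), which gives $R_{2}(q_{t})=\frac{2s_{1}}{1-k^{2}}\,\mathrm{dn}\tau\,f_{2}(p)$. Since $\mathrm{dn}\tau>0$ always, $s_{1}=\pm1\neq0$, and $1-k^{2}>0$, a zero of $R_{2}(q_{t})$ would force $f_{2}(p)=0$; but by Lemma~\ref{lem:f2p} we have $f_{2}(p)>0$ for all $p>0$ and $k\in(0,1)$. Since $t>0$ implies $p>0$ (by (\ref{eq:2.17})--(\ref{eq:2.18})), this is impossible. For the boundary case $k=0$ one can note $f_{2}(p)$ reduces to $\sin p\,(p\cdot 0\text{-terms})$-type expression that is still positive, or simply restrict as the earlier propositions implicitly do; I expect the paper to treat $k\in(0,1)$ as the generic case and handle $k=0$ (circle) separately if at all.

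For item (2), $\lambda\in C_{2}$, I would use (\ref{eq:47}): $R_{2}(q_{t})=\frac{2k}{1-k^{2}}\,\mathrm{cn}\tau\,f_{4}(p)$. Here $\frac{2k}{1-k^{2}}\neq0$ for $k\in(0,1)$, so $R_{2}(q_{t})=0$ iff $\mathrm{cn}\tau=0$ or $f_{4}(p)=0$; by Proposition~\ref{prop:f4p} the latter happens precisely when $p=p_{2}^{n}(k)$ for some $n$ (and $p=0$, i.e.\ $t=0$, is excluded). This gives the stated equivalence. For item (3), $\lambda\in C_{3}$, I would use (\ref{eq:3.18}): $R_{2}(q_{t})=s_{1}\frac{2p+\sinh 2p}{2\sqrt{\Delta}}$, where $\Delta=\cosh^{2}\tau+\sinh^{2}p>0$ and $s_{1}\neq0$. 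The function $h(p)=2p+\sinh 2p$ satisfies $h(0)=0$ and $h'(p)=2+2\cosh 2p>0$, so $h$ is strictly increasing and hence $h(p)>0$ for all $p>0$; thus $R_{2}(q_{t})\neq 0$ for $t>0$. There is no real obstacle here — every ingredient is already in place — so the only care needed is bookkeeping: correctly citing which of Lemma~\ref{lem:f2p}, Proposition~\ref{prop:f4p}, or the elementary monotonicity of $2p+\sinh 2p$ applies in each case, and noting that $t>0\Rightarrow p>0$ so that the roots at $p=0$ never contribute.
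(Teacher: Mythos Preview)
Your proposal is correct and follows essentially the same approach as the paper: for each case you invoke exactly the same formula (\ref{eq:3.6}), (\ref{eq:47}), (\ref{eq:3.18}) and the same auxiliary result (Lemma~\ref{lem:f2p}, Proposition~\ref{prop:f4p}, or the elementary monotonicity of $2p+\sinh 2p$) that the paper uses. The paper's proof is terser---it omits your remarks about the nonvanishing of $\mathrm{dn}\tau$, $s_{1}$, and the $k=0$ boundary case---but the logical content is the same.
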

\begin{proof}
Item (1) is given from (\ref{eq:3.6}) and Lemma \ref{lem:f2p}. Item
(2) is given from (\ref{eq:47}) and Proposition \ref{prop:f4p}.
Item (3) follows from (\ref{eq:3.18}) where $2p+\sinh2p=0$ for $p=0$
and $(2p+\sinh2p)^{\prime}=2+2\cosh2p>0$ for $p\geq0$. Hence $R_{2}(q_{t})$
does not admit any root for $t>0$ in this case. \hfill$\square$
\end{proof}

Let us now summarize the results obtained on the characterization
of the Maxwell strata.
\begin{theorem}
\label{thm:The-Maxwell-strata}The Maxwell strata $MAX^{i}\cap N^{j}$
are given as:

$(1)\qquad\mathrm{MAX}^{1}\cap N_{1}=\left\{ \nu\in N_{1}\quad\vert\quad p=p_{1}^{n}(k),\quad n\in\mathbb{N},\quad\mathrm{cn}\tau\neq0\right\} $,

$(2)\qquad\mathrm{MAX}^{1}\cap N_{2}=MAX^{1}\cap N_{3}=\emptyset$,

$(3)\qquad\mathrm{MAX}^{2}\cap N_{1}=MAX^{2}\cap N_{2}=\left\{ \nu\in N_{1}\cup N_{2}\quad\vert\quad p=2nK(k),\quad n\in\mathbb{N},\quad\mathrm{sn}\tau\neq0\right\} ,$

$(4)\qquad\mathrm{MAX}^{2}\cap N_{3}=\emptyset$,

$(5)\qquad\mathrm{MAX}^{6}\cap N_{1}=\mathrm{MAX}^{6}\cap N_{3}=\emptyset$,

$(6)\qquad\mathrm{MAX}^{6}\cap N_{2}=\left\{ \nu\in N_{2}\quad\vert\quad p=p_{2}^{n}(k),\quad n\in\mathbb{N},\quad\mathrm{cn}\tau\neq0\right\} $.\end{theorem}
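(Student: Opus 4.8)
The plan is to assemble Theorem~\ref{thm:The-Maxwell-strata} directly from the general description of the Maxwell strata recalled at the end of Section~2 together with Propositions~\ref{prop:3.1}, \ref{prop:3.4}, \ref{prop:3.5}. The strategy is purely a matter of intersecting the abstract characterizations $\nu\in\mathrm{MAX}^i\Leftrightarrow\{R_i(q)=0\text{ or }z=0,\ \ldots\}$ with the explicit root descriptions we have just proved for each of the three cases $\lambda\in C_1,C_2,C_3$. First I would fix $i$ and $j$, write out the defining condition for $\mathrm{MAX}^i$ restricted to $N_j$, and then substitute the relevant proposition. For item~(1), $\mathrm{MAX}^1\cap N_1$: the general condition is $R_1(q)=0$ together with $\mathrm{cn}\,\tau\neq0$; by Proposition~\ref{prop:3.4}(1), $R_1(q_t)=0$ for $t>0$ means $p=p_1^n(k)$ or $\mathrm{cn}\,\tau=0$, and intersecting with $\mathrm{cn}\,\tau\neq0$ kills the second alternative, leaving exactly $p=p_1^n(k)$, $n\in\mathbb{N}$, $\mathrm{cn}\,\tau\neq0$.

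For items~(2),(4),(5) — the empty strata — I would invoke the impossibility clauses: $\mathrm{MAX}^1\cap N_2$ and $\mathrm{MAX}^1\cap N_3$ require $R_1(q_t)=0$, which Proposition~\ref{prop:3.4}(2)--(3) declares impossible for $t>0$; similarly $\mathrm{MAX}^6\cap N_1$ and $\mathrm{MAX}^6\cap N_3$ require $R_2(q_t)=0$, impossible by Proposition~\ref{prop:3.5}(1),(3); and $\mathrm{MAX}^2\cap N_3$ requires $z=0$ with $\tau\neq0$, but Proposition~\ref{prop:3.1}(3) forces $p=0$ and $\tau=0$ simultaneously when $z_t=0$, so the stratum is empty (note $p=0$ already contradicts $t>0$ as well). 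For item~(3), $\mathrm{MAX}^2\cap N_1$ and $\mathrm{MAX}^2\cap N_2$: the general condition is $z=0$ with $\mathrm{sn}\,\tau\neq0$; Propositions~\ref{prop:3.1}(1)--(2) give $z_t=0\Leftrightarrow p=2Kn,\ \mathrm{sn}\,\tau=0$, but here we must reconcile the $\mathrm{sn}\,\tau=0$ from the proposition with the $\mathrm{sn}\,\tau\neq0$ in the Maxwell condition — the resolution is that in the Maxwell setting $z=0$ is achieved via the $p=2Kn$ branch (where $\mathrm{sn}\,p=0$ makes $\sinh z_t=0$ regardless of $\tau$), so one keeps $p=2nK$, $n\in\mathbb{N}$, and the constraint $\mathrm{sn}\,\tau\neq0$ from the reflection symmetry. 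Finally item~(6) mirrors item~(1): $\mathrm{MAX}^6\cap N_2$ demands $R_2(q)=0$ with $\mathrm{cn}\,\tau\neq0$, and Proposition~\ref{prop:3.5}(2) gives $R_2(q_t)=0\Leftrightarrow p=p_2^n(k)$ or $\mathrm{cn}\,\tau=0$; intersecting with $\mathrm{cn}\,\tau\neq0$ leaves $p=p_2^n(k)$, $n\in\mathbb{N}$, $\mathrm{cn}\,\tau\neq0$.

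The one subtlety I would be careful about — and I expect it to be the only real obstacle — is the bookkeeping around the $z=0$ case in item~(3). Proposition~\ref{prop:3.1} states $z_t=0 \Leftrightarrow p=2Kn,\ \mathrm{sn}\,\tau=0$ as written, which literally reads as a conjunction, yet the Maxwell description insists $\mathrm{sn}\,\tau\neq0$. The point to articulate clearly is that $\sinh z_t = s\,2k\,\mathrm{sn}\,p\,\mathrm{sn}\,\tau/\Delta$ vanishes as soon as \emph{either} factor vanishes, so the locus $\{z_t=0\}$ is the union $\{p=2Kn\}\cup\{\mathrm{sn}\,\tau=0\}$; the symmetry $\varepsilon^2$ that produces $\mathrm{MAX}^2$ degenerates precisely on the $\mathrm{sn}\,\tau=0$ component, so the genuine Maxwell stratum sits on the complementary branch $p=2nK$ with $\mathrm{sn}\,\tau\neq0$. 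Once that is said, nothing else is more than a one-line substitution. I would also note in passing that restricting to $n\in\mathbb{N}$ (rather than $n\in\mathbb{Z}$) is forced by $t>0$ together with the parametrizations $p=t/2$ or $p=t/(2k)$ in \eqref{eq:2.17}--\eqref{eq:2.18}, which make $p>0$. The proof is then complete by collecting the six cases.

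\begin{proof}
We combine the general description of the Maxwell strata from Section~\ref{sec:Previous-Work} with Propositions~\ref{prop:3.1}, \ref{prop:3.4}, and \ref{prop:3.5}. Throughout, $t>0$, hence $p>0$ by \eqref{eq:2.17}--\eqref{eq:2.18}, so only $n\in\mathbb{N}$ contributes.

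(1) For $\nu\in N_1$, $\nu\in\mathrm{MAX}^1\Leftrightarrow R_1(q)=0$ and $\mathrm{cn}\,\tau\neq0$. By Proposition~\ref{prop:3.4}(1), $R_1(q_t)=0$ iff $p=p_1^n(k)$ or $\mathrm{cn}\,\tau=0$; intersecting with $\mathrm{cn}\,\tau\neq0$ leaves $p=p_1^n(k)$, $n\in\mathbb{N}$, $\mathrm{cn}\,\tau\neq0$.

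(2) For $\nu\in N_2\cup N_3$, $\nu\in\mathrm{MAX}^1\Leftrightarrow R_1(q)=0$, which is impossible by Proposition~\ref{prop:3.4}(2)--(3). Hence $\mathrm{MAX}^1\cap N_2=\mathrm{MAX}^1\cap N_3=\emptyset$.

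(3) For $\nu\in N_1\cup N_2$, $\nu\in\mathrm{MAX}^2\Leftrightarrow z=0$ and $\mathrm{sn}\,\tau\neq0$. By \eqref{eq:3.2} and \eqref{eq:3.8}, $\sinh z_t$ is a nonzero multiple of $\mathrm{sn}\,p\,\mathrm{sn}\,\tau$, so $z_t=0$ iff $\mathrm{sn}\,p=0$ or $\mathrm{sn}\,\tau=0$, i.e. $p=2nK$ or $\mathrm{sn}\,\tau=0$ (this refines the statement of Proposition~\ref{prop:3.1}(1)--(2), whose two conditions describe the two components of the zero locus). Imposing $\mathrm{sn}\,\tau\neq0$ discards the second component and leaves $p=2nK$, $n\in\mathbb{N}$, $\mathrm{sn}\,\tau\neq0$.

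(4) For $\nu\in N_3$, $\nu\in\mathrm{MAX}^2\Leftrightarrow z=0$ and $\tau\neq0$. By Proposition~\ref{prop:3.1}(3), $z_t=0$ forces $p=0$ and $\tau=0$, contradicting both $p>0$ and $\tau\neq0$. Hence $\mathrm{MAX}^2\cap N_3=\emptyset$.

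(5) For $\nu\in N_1\cup N_3$, $\nu\in\mathrm{MAX}^6\Leftrightarrow R_2(q)=0$, which is impossible by Proposition~\ref{prop:3.5}(1) and (3). Hence $\mathrm{MAX}^6\cap N_1=\mathrm{MAX}^6\cap N_3=\emptyset$.

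(6) For $\nu\in N_2$, $\nu\in\mathrm{MAX}^6\Leftrightarrow R_2(q)=0$ and $\mathrm{cn}\,\tau\neq0$. By Proposition~\ref{prop:3.5}(2), $R_2(q_t)=0$ iff $p=p_2^n(k)$ or $\mathrm{cn}\,\tau=0$; intersecting with $\mathrm{cn}\,\tau\neq0$ leaves $p=p_2^n(k)$, $n\in\mathbb{N}$, $\mathrm{cn}\,\tau\neq0$. \hfill$\square$
\end{proof}
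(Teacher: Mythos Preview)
Your proof is correct and follows exactly the paper's approach, which is the one-line invocation ``This follows from the general description of the Maxwell strata and Propositions~\ref{prop:3.1}, \ref{prop:3.4} and \ref{prop:3.5}.'' Your explicit handling of item~(3) --- reading the comma in Proposition~\ref{prop:3.1} as a disjunction by going back to the factorization $\sinh z_t \propto \mathrm{sn}\,p\,\mathrm{sn}\,\tau$ in \eqref{eq:3.2}, \eqref{eq:3.8} --- is a valid and useful clarification of what the paper leaves implicit.
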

\begin{proof}
This follows from the general description of the Maxwell strata and
Propositions \ref{prop:3.1}, \ref{prop:3.4} and \ref{prop:3.5}.
\hfill$\square$
\end{proof}

\subsection{\label{sub:Lim_Pts_Max_Set}Limit Points of the Maxwell Set}

It remains to consider the points at the boundary of the Maxwell strata
like the points in $N_{1}$ with $p=p_{1}^{n}(k),\quad\mathrm{cn}\tau=0$.
Since the action of reflections in the preimage of exponential map
is same for $\mathrm{SH}(2)$ and $\mathrm{SE}(2)$, it can be readily
seen using Proposition 5.8 \cite{max_sre} that when $\nu\in N_{1}$,
$p=p_{1}^{1}(k),\quad\mathrm{cn}\tau=0$ and when $\nu\in N_{2}$,
$p=p_{2}^{1}(k),\quad\mathrm{cn}\tau=0$ then $q_{t}=\mathrm{Exp}(\nu)$
is a conjugate point. The same reasoning applies to the case when
$\nu\in N_{1},\quad\mathrm{sn}\tau=0$ and $\nu\in N_{2},\quad\mathrm{sn}\tau=0$.
Thus we get the following statement.
\begin{proposition}
\label{prop:conj_pt_3.6}A point $q_{t}=\mathrm{Exp}(\nu)$ is conjugate
to the initial point $q_{0}$ if the following conditions hold:

$(1)\qquad\nu\in N_{1},\quad p=p_{1}^{n}(k),\quad n\in\mathbb{N},\quad\mathrm{cn}\tau=0$.

$(2)\qquad\nu\in N_{1}\cup N_{2},\quad p=2nK(k),\quad n\in\mathbb{N},\quad\mathrm{sn}\tau=0.$

$(3)\qquad\nu\in N_{2},\quad p=p_{2}^{n}(k),\quad n\in\mathbb{N},\quad\mathrm{cn}\tau=0.$
\end{proposition}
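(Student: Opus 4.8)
The plan is to reduce the three claims to known conjugate-point computations for the closely related problem on $\mathrm{SE}(2)$, exploiting the fact — already noted in the paragraph preceding the statement — that the action of the group $G$ of discrete symmetries in the preimage of the exponential map is identical for $\mathrm{SH}(2)$ and $\mathrm{SE}(2)$. The key observation is that the points listed in (1)--(3) are precisely the limit points of the Maxwell strata $\mathrm{MAX}^{1}\cap N_{1}$, $\mathrm{MAX}^{2}\cap(N_{1}\cup N_{2})$, and $\mathrm{MAX}^{6}\cap N_{2}$ described in Theorem~\ref{thm:The-Maxwell-strata}: in case (1) we send $\mathrm{cn}\,\tau\to 0$ along the stratum $p=p_{1}^{n}(k)$; in case (2) we send $\mathrm{sn}\,\tau\to 0$ along $p=2nK(k)$; in case (3) we send $\mathrm{cn}\,\tau\to 0$ along $p=p_{2}^{n}(k)$. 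At such a limit the two distinct extremals $\lambda$ and $\lambda^{i}=\varepsilon^{i}(\lambda)$ that meet at $\mathrm{Exp}(\nu)$ coalesce, which is the classical mechanism by which a Maxwell limit point becomes a critical point of the exponential map, i.e. a conjugate point.

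First I would recall the relevant statement from \cite{max_sre} (Proposition~5.8 there) giving, for the sub-Riemannian problem on $\mathrm{SE}(2)$, the list of conjugate points arising as limit points of the Maxwell strata generated by the reflections $\varepsilon^{i}$; the conditions obtained there are stated in exactly the coordinates $(\lambda,t)$ and the reparametrized quantities $\tau,p$ that we use here. Second, I would verify that the passage to $\mathrm{SH}(2)$ is legitimate: the exponential maps of the two problems differ only in the horizontal subsystem (trigonometric versus hyperbolic functions in $z$), but the vertical subsystem, the symmetry group $G$, its action $\varepsilon^{i}$ on the preimage $N$, and hence the combinatorial structure of which extremals coalesce, are the same. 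Therefore the argument of \cite{max_sre} that identifies a Maxwell limit point as a conjugate point — essentially that the second variation degenerates there because a one-parameter family of competing geodesics with equal cost shrinks to a point — transfers verbatim, since that argument only uses the structure in the preimage together with smoothness of $\mathrm{Exp}$.

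Third, I would translate the three lists. Case (2), $\mathrm{sn}\,\tau=0$ with $p=2nK$, corresponds to the reflection $\varepsilon^{2}$ (the one governing $z_t=0$, by Proposition~\ref{prop:3.1}) and is handled identically for $C_1$ and $C_2$; here the coalescing pair is symmetric about the axis $\{z=0\}$. Cases (1) and (3), $\mathrm{cn}\,\tau=0$ with $p=p_1^n(k)$ on $N_1$ and $p=p_2^n(k)$ on $N_2$ respectively, correspond to $\varepsilon^{1}$ and $\varepsilon^{6}$ (the reflections governing $R_1=0$ and $R_2=0$, by Propositions~\ref{prop:3.4} and \ref{prop:3.5}); one checks that at these points the corresponding function $R_i(q_t)$ vanishes to higher order, signalling degeneracy. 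In each case the final step is to confirm that the limiting value of $t$ is genuinely positive — this follows from the bounds $p_1^n(k),p_2^n(k)\in(2nK,(2n+1)K)$ of Propositions~\ref{prop:f1p} and \ref{prop:f4p} and $p=2nK$ with $n\ge 1$, together with (\ref{eq:2.17})--(\ref{eq:2.18}), so $t>0$ automatically.

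The main obstacle is the transfer step: one must be careful that "the action of reflections in the preimage is the same for $\mathrm{SH}(2)$ and $\mathrm{SE}(2)$" really does imply that a Maxwell limit point is a conjugate point, rather than merely a point where two Maxwell strata meet without degeneracy of $d\,\mathrm{Exp}$. Making this rigorous requires either (a) invoking the general principle (as in \cite{max_sre}) that a smooth curve of Maxwell points, along which the coalescing extremals depend smoothly on the parameter, accumulates at a point where the two extremals agree to first order, forcing $\det d\,\mathrm{Exp}=0$; or (b) a direct computation of the Jacobian of $\mathrm{Exp}$ at these specific points — which is exactly the computation taken up in Section~4 and which, for these particular values, is already implicitly available. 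I would present option (a) as the proof, citing Proposition~5.8 of \cite{max_sre}, and remark that the Jacobian computations of the next section give an independent confirmation.
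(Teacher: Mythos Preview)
Your proposal is correct and follows essentially the same route as the paper: the paper's argument is precisely the paragraph preceding the statement, which invokes Proposition~5.8 of \cite{max_sre} together with the observation that the reflections $\varepsilon^{i}$ act identically in the preimage for $\mathrm{SH}(2)$ and $\mathrm{SE}(2)$. Your write-up is in fact more careful than the paper's, since you spell out why the transfer is legitimate and flag the one nontrivial step (that coalescing Maxwell extremals force $\det d\,\mathrm{Exp}=0$), whereas the paper simply asserts this by reference.
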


\subsection{Upper Bound on Cut Time}

It is well known that a normal extremal trajectory cannot be optimal
after the first Maxwell time. We now calculate the first Maxwell time
$t_{1}^{MAX}:C\rightarrow(0,+\infty]$.
\begin{proposition}
\label{prop:3.7}The first Maxwell time $t_{1}^{MAX}$ corresponding
to the reflections $\varepsilon^{1},\varepsilon^{2},\varepsilon^{6}$
is given as:
\begin{eqnarray*}
\lambda\in C_{1} & \implies & t_{1}^{MAX}(\lambda)=4K(k),\\
\lambda\in C_{2} & \implies & t_{1}^{MAX}(\lambda)=4kK(k),\\
\lambda\in C_{3}\cup C_{4}\cup C_{5} & \implies & t_{1}^{MAX}(\lambda)=+\infty.
\end{eqnarray*}
\end{proposition}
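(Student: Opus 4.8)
The plan is to compute, for each energy stratum $C_i$, the first positive instant at which one of the Maxwell conditions from Theorem \ref{thm:The-Maxwell-strata} is met, and then take the minimum over the three reflections $\varepsilon^1,\varepsilon^2,\varepsilon^6$. Recall from \eqref{eq:2.17}--\eqref{eq:2.18} that along a geodesic with fixed $\lambda$ we have $p=t/2$ for $\lambda\in C_1\cup C_3$ and $p=t/2k$ for $\lambda\in C_2$, while $\tau=\frac{1}{2}(\varphi_t+\varphi)=\varphi+p$ (resp. $\tau=\varphi/k+p$) moves together with $p$. The key observation is that the conditions involving $\tau$ ($\mathrm{cn}\,\tau=0$ or $\mathrm{sn}\,\tau=0$) depend on the initial point $\varphi$ and so do not give a uniform-in-$\lambda$ first time, whereas the conditions involving only $p$ (namely $p=p_1^n(k)$, $p=p_2^n(k)$, $p=2nK$) do. Since we want $t_1^{\mathrm{MAX}}(\lambda)$ as a function on $C$, the first candidate coming from each $\mathrm{MAX}^i$ stratum is governed by the smallest admissible value of $p$.

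The main steps are as follows. First, for $\lambda\in C_1$: by Theorem \ref{thm:The-Maxwell-strata}(1),(3), the relevant strata are $\mathrm{MAX}^1$ (at $p=p_1^n(k)$, with $\mathrm{cn}\,\tau\neq0$) and $\mathrm{MAX}^2$ (at $p=2nK$, with $\mathrm{sn}\,\tau\neq0$); $\mathrm{MAX}^6\cap N_1=\emptyset$. The smallest $p_1^n$ is $p_1^1(k)\in(2K,3K)$ by Proposition \ref{prop:f1p}, while the smallest positive multiple $2nK$ is $2K$, which is smaller. Hence the first Maxwell time corresponds to $p=2K$, i.e. $t=2p=4K(k)$; one must also check that at this value the excluded degenerate locus ($\mathrm{sn}\,\tau=0$) is not generic and, more importantly, that the point $p=2K$ genuinely lies in $\mathrm{MAX}^2$ and not merely at its boundary — here Proposition \ref{prop:conj_pt_3.6}(2) identifies the boundary locus $\mathrm{sn}\,\tau=0$ as conjugate points rather than Maxwell points, so on the open part one does get a Maxwell point, giving $t_1^{\mathrm{MAX}}(\lambda)=4K(k)$. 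Second, for $\lambda\in C_2$: by Theorem \ref{thm:The-Maxwell-strata}(3),(6), the strata are $\mathrm{MAX}^2$ at $p=2nK$ and $\mathrm{MAX}^6$ at $p=p_2^n(k)\in(2nK,(2n+1)K)$; again $p=2K$ is the smallest, and now $t=2kp=4kK(k)$. Third, for $\lambda\in C_3$: by Theorem \ref{thm:The-Maxwell-strata}(2),(4),(5) all of $\mathrm{MAX}^1,\mathrm{MAX}^2,\mathrm{MAX}^6$ intersect $N_3$ in the empty set, so no Maxwell point of these types occurs and $t_1^{\mathrm{MAX}}(\lambda)=+\infty$. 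Finally, for $\lambda\in C_4\cup C_5$ the extremals are the constant (for $C_4$) or $z$-axis type degenerate trajectories; one checks directly from the explicit formulas — or from the fact that these are limits of the $C_1,C_3$ families as $k\to0$ or $k\to1$ — that none of $R_1,R_2,z_t$ vanishes for $t>0$, so again $t_1^{\mathrm{MAX}}=+\infty$.

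Assembling these three computations and taking the infimum over the reflections $\varepsilon^1,\varepsilon^2,\varepsilon^6$ for each stratum gives exactly the stated formula: $4K(k)$ on $C_1$, $4kK(k)$ on $C_2$, and $+\infty$ on $C_3\cup C_4\cup C_5$.

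The step I expect to be the main obstacle is the careful handling of the degenerate boundary cases — verifying that $p=2nK$ with $\mathrm{sn}\,\tau=0$, and $p=p_1^1(k)$ or $p_2^1(k)$ with $\mathrm{cn}\,\tau=0$, are genuinely excluded from the Maxwell strata (they are conjugate points by Proposition \ref{prop:conj_pt_3.6}) while the complementary open locus at the same $p$-value is nonempty and does lie in the stratum. In other words, one must be sure that, for a given $\lambda$, the condition "$p=2K$ and $\mathrm{sn}\,\tau\neq0$" is actually realized along the trajectory at time $t=4K(k)$, i.e. that $\tau=\varphi+2K$ does not happen to land on $\mathrm{sn}\,\tau=0$ for the particular $\varphi$ — and when it does (a measure-zero set of $\lambda$), that the point is still the first non-optimality instant because it is then a conjugate point at the same time $4K(k)$. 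This bookkeeping between the Maxwell locus and its conjugate-point boundary is the only delicate point; the rest is reading off smallest roots from Propositions \ref{prop:f1p} and \ref{prop:f4p} and substituting $t=2p$ or $t=2kp$.
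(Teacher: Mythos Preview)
Your argument for $C_1$, $C_2$, $C_3$ is correct and follows the same route as the paper: read off the smallest admissible $p$ from Theorem~\ref{thm:The-Maxwell-strata}, compare $2K<p_1^1(k)$ (resp.\ $2K<p_2^1(k)$) via Propositions~\ref{prop:f1p} and~\ref{prop:f4p}, and invoke Proposition~\ref{prop:conj_pt_3.6} for the boundary locus $\mathrm{sn}\,\tau=0$. That part is fine.

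The gap is in your treatment of $C_4\cup C_5$. Your claimed reason --- that ``none of $R_1,R_2,z_t$ vanishes for $t>0$'' --- is false. For $\lambda\in C_4$ the geodesic is $(x_t,y_t,z_t)=(\pm t,0,0)$, so $z_t\equiv 0$ and $R_1\equiv 0$ identically; for $\lambda\in C_5$ the geodesic is $(0,0,\pm t)$, so $R_1\equiv R_2\equiv 0$ identically. (Also, the $C_4$ extremals are not constant in $M$; only the pendulum coordinate $(\gamma,c)$ is constant.) Moreover, the general description of the Maxwell strata you are relying on is stated only for $\lambda\in C_1\cup C_2\cup C_3$, so the equivalences $\nu\in\mathrm{MAX}^i\Leftrightarrow R_i=0$ (or $z=0$) are not available for $C_4,C_5$ in the first place.

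The correct mechanism is different: for $\lambda\in C_4\cup C_5$ the covectors are fixed points (or map to the same geodesic) under the reflections $\varepsilon^1,\varepsilon^2,\varepsilon^6$, so the defining condition $\lambda\neq\lambda^i$ of a Maxwell point fails and no Maxwell point of these types can occur. The paper handles this by citing the explicit description of the trajectories and of the action of the reflections on $C_4,C_5$ from \cite{Extremal_Pseudo_Euclid} (Theorems~5.4,~5.5 and Proposition~6.3 there). You should replace your non-vanishing claim by this fixed-point argument.
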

\begin{proof}
For $\lambda\in C_{1},\, C_{2},\, C_{3}$ apply Theorem \ref{thm:The-Maxwell-strata}
and Proposition \ref{prop:conj_pt_3.6}. For $\lambda\in C_{4}$ and
$\lambda\in C_{5}$, apply Theorems 5.4, 5.5 and Proposition 6.3 from
\cite{Extremal_Pseudo_Euclid}.\hfill$\square$
\end{proof}

Using Proposition \ref{prop:3.7} we get the following global upper
bound on the cut time $t_{cut}(\lambda)$ for extremal trajectories.
\begin{corollary}
For any $\lambda\in C$,
\end{corollary}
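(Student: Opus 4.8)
The plan is to read the bound off directly from Proposition~\ref{prop:3.7} together with the principle, recalled just above the statement, that a normal extremal trajectory cannot be optimal after its first Maxwell time. First I would fix $\lambda\in C$ and recall that the cut time is
\[
t_{cut}(\lambda)=\sup\{\,t>0:\ \mathrm{Exp}(\lambda,\cdot)|_{[0,t]}\text{ is optimal}\,\}.
\]
The standard Maxwell-point argument (if two distinct extremals reach the same endpoint with equal cost, neither can remain optimal beyond that instant; see \cite{agrachev_sachkov}) gives $t_{cut}(\lambda)\le t_1^{MAX}(\lambda)$ for every $\lambda\in C$.

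Second, I would substitute the three cases of Proposition~\ref{prop:3.7}. For $\lambda\in C_1$ this yields $t_{cut}(\lambda)\le 4K(k)$; for $\lambda\in C_2$ it yields $t_{cut}(\lambda)\le 4kK(k)$; and for $\lambda\in C_3\cup C_4\cup C_5$ one has $t_1^{MAX}(\lambda)=+\infty$, so the inequality is vacuous — consistent with the fact that on the critical energy level and at the pendulum equilibria the extremals carry no Maxwell cut of the type produced by the reflections $\varepsilon^1,\varepsilon^2,\varepsilon^6$.

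There is essentially no obstacle here: the only point requiring care is that $k=k(\lambda)$ must be understood as the elliptic modulus attached to $\lambda$ through the elliptic coordinates of Section~2, and that in the hyperbolic ($C_2$) normalization the natural length parameter is $t/k$ rather than $t$, which is exactly why the second bound reads $4kK(k)$. Both facts are already encoded in the definition of $p$ in (\ref{eq:2.18}) and in the statement of Proposition~\ref{prop:3.7}, so assembling the corollary amounts to no more than quoting them.
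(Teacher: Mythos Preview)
Your proposal is correct and matches the paper's own (implicit) argument: the paper simply invokes the well-known principle that a normal extremal cannot be optimal past its first Maxwell time and then reads off $t_{1}^{MAX}$ from Proposition~\ref{prop:3.7}, exactly as you do. Your case-by-case unpacking into $4K(k)$, $4kK(k)$, and $+\infty$ is accurate but not required for the bare inequality~(\ref{eq:3.34}); the only subtlety worth flagging is that for the exceptional covectors with $\mathrm{sn}\tau=0$ (resp.\ $\mathrm{cn}\tau=0$) the instant $t_{1}^{MAX}(\lambda)$ is a conjugate rather than a genuine Maxwell time (Proposition~\ref{prop:conj_pt_3.6}), so the cut-time bound there rests on loss of local optimality at a conjugate point---this is already absorbed into the proof of Proposition~\ref{prop:3.7}, which invokes both Theorem~\ref{thm:The-Maxwell-strata} and Proposition~\ref{prop:conj_pt_3.6}.
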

\begin{equation}
t_{\mathrm{cut}}(\lambda)\leq t_{1}^{MAX}(\lambda).\label{eq:3.34}
\end{equation}

\section{Conjugate Points}

In this section we study local optimality of sub-Riemannian geodesics
and compute the first conjugate time (i.e., the time of loss of local
optimality) along extremal trajectories. Let us recall certain important
facts related to conjugate points which will also outline the scheme
of further analysis. A point $q_{t}=\mathrm{Exp}(\lambda,t)$ is called
a conjugate point for $q_{0}$ if $\nu=(\lambda,t)=(\gamma,c,t)$
is a critical point of the exponential mapping, $q_{t}$ being its
critical value. In other words, this definition is given as: 
\[
d_{\nu}\mathrm{Exp}\,:\, T_{\nu}N\text{\ensuremath{\rightarrow}}T_{q_{t}}M\textrm{ is degenerate},
\]
where $d_{\nu}\mathrm{Exp}$ amounts to the Jacobian $J$ of the exponential
mapping i.e.,
\[
J=\frac{\partial(x_{t},y_{t},z_{t})}{\partial(\gamma,c,t)}=\left|\begin{array}{ccc}
\frac{\partial x_{t}}{\partial\gamma} & \frac{\partial x_{t}}{\partial c} & \frac{\partial x_{t}}{\partial t}\\
\frac{\partial y_{t}}{\partial\gamma} & \frac{\partial y_{t}}{\partial c} & \frac{\partial y_{t}}{\partial t}\\
\frac{\partial z_{t}}{\partial\gamma} & \frac{\partial z_{t}}{\partial c} & \frac{\partial z_{t}}{\partial t}
\end{array}\right|.
\]
According to the definition, roots of the equation $J=0$ give the
conjugate points and the time corresponding to these roots is called
the conjugate time. Carl Gustav Jacob Jacobi gave a geometric interpretation
of conjugate points according to which a conjugate point $q_{t}$
of a point $q_{0}$ is the point where an extremal trajectory meets
the envelope of the set of extremal trajectories through $q_{0}$
\cite{Bolza}. This is depicted in Figure \ref{fig:Conj_pts}. 
\begin{figure}
\centering{}\emph{\includegraphics[scale=0.6]{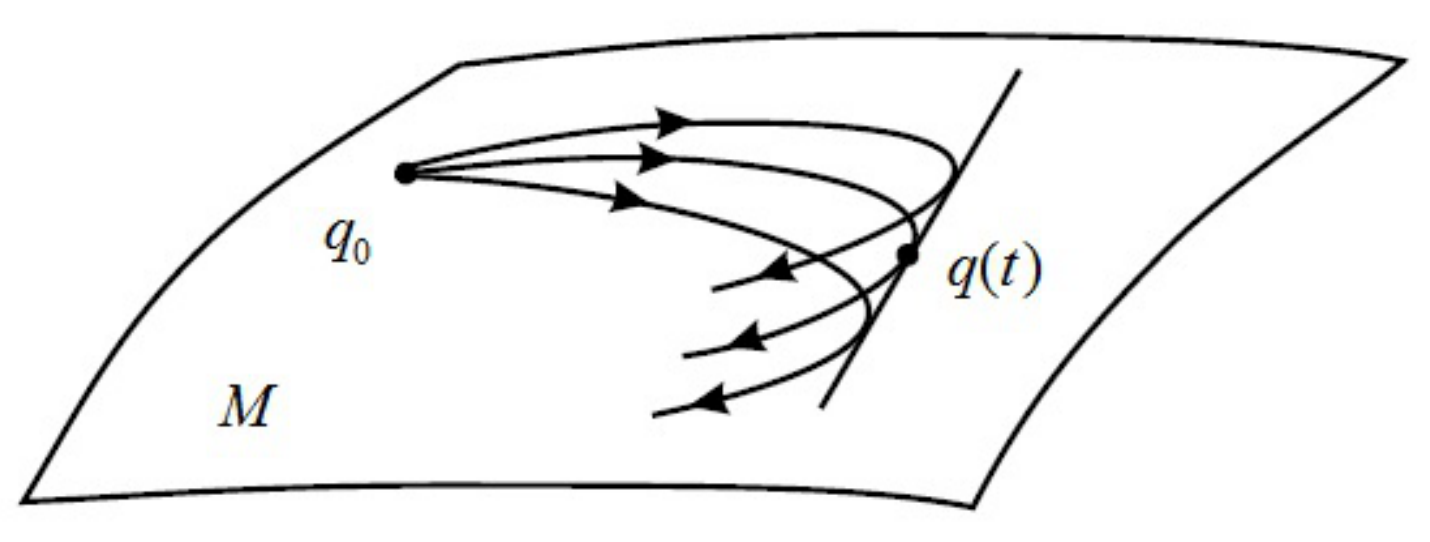}}\protect\caption{\label{fig:Conj_pts}Concept of conjugate point}
\end{figure}
In the local optimality analysis the first conjugate time is an important
notion as this is the time at which an extremal trajectory loses local
optimality. The first conjugate time is defined as:
\[
t_{1}^{\mathrm{conj}}(\lambda)=\textrm{inf}\left\{ t>0\quad\vert\quad t\textrm{ is a conjugate time along }\mathrm{Exp}(\lambda,s),\quad s\geq0\right\} .
\]

\subsection{Conjugate Points and Homotopy}

A lower bound of the form $t_{1}^{\mathrm{conj}}(\lambda)\geq t_{1}^{MAX}(\lambda)$
for all extremal trajectories $q(t)=\mathrm{Exp}(\lambda,t)$ was
proved in the Euler Elastic problem \cite{Euler_Conj}, sub-Riemannian
problem on SE(2) \cite{cut_sre1} and sub-Riemannian problem on the
Engel group \cite{engel_conj} via homotopy considering the fact that
the Maslov index (number of conjugate points along an extremal trajectory)
is invariant under homotopy \cite{Agrachev_Geo_OCP}. In order to
qualify for proof of absence of conjugate points below the lower bound
of the first conjugate time via homotopy, the optimal control problem
must satisfy a set of hypotheses \textbf{(H1})--(\textbf{H4)} \cite{Euler_Conj}
outlined below. 

Consider a general analytic optimal control problem on an analytic
manifold $M$:

\begin{eqnarray}
\dot{q} & = & f(q,u),\quad q\in M,\quad u\in U\subset\mathbb{R}^{m},\label{eq:4.1}\\
q(0) & = & q_{0},\quad q(t_{1})=q_{1},\quad t_{1}\textrm{ is fixed},\\
J & = & \intop_{0}^{t_{1}}\Phi(q(t),u(t))dt\rightarrow\min,\label{eq:4.3}
\end{eqnarray}
where $f(q,u)$ is a family of vector fields and $\Phi(q,u)$ is some
function on $M\times U$ analytic in system state $q\in M$ and control
parameter $u\in U$. Note that the sub-Riemannian problem on $M=\mathrm{SH}(2)$
(\ref{eq:2.1})--(\ref{eq:2.5}) is of this form. Let the control
dependent normal Hamiltonian of PMP for (\ref{eq:4.1})-(\ref{eq:4.3})
be given as:
\begin{equation}
h_{u}(\lambda)=\langle\lambda,f(q,u)\rangle-\Phi(q,u).
\end{equation}
Let a triple $\left(\tilde{u}(t),\lambda_{t},q(t)\right)$ represent
respectively the extremal control, extremal and extremal trajectory
corresponding to the normal Hamiltonian $h_{u}(\lambda)$. Let the
following hypotheses be satisfied for (\ref{eq:4.1})--(\ref{eq:4.3})
:

\textbf{(H1)} For all $\lambda\in T^{*}M$ and $u\in U$, the quadratic
form $\frac{\partial^{2}h_{u}}{\partial u^{2}}(\lambda)$ is negative
definite. This is the strong Legendre condition along the extremal
pair $(\tilde{u}(t),\lambda(t))$.

\textbf{(H2)} For any $\lambda\in T^{*}M$, the function $u\mapsto h_{u}(\lambda),\quad u\in U$,
has a maximum point $\bar{u}(\lambda)\in U$:
\[
h_{\bar{u}(\lambda)}(\lambda)=\max_{u\in U}h_{u}(\lambda),\quad\lambda\in T^{*}M.
\]

\textbf{(H3)} The extremal control $\tilde{u}(.)$ is a corank one
critical point of the endpoint mapping.

\textbf{(H4)} All trajectories of the Hamiltonian vector field $\overrightarrow{H}(\lambda),\quad H(\lambda)=\max_{u\in U}h_{u}(\lambda),\quad\lambda\in T^{*}M$,
are continued for $t\in[0,+\infty)$.

Under hypotheses (\textbf{H1})--(\textbf{H4}), the following is true
for the optimal control problem of the form (\ref{eq:4.1})--(\ref{eq:4.3}):
\begin{enumerate}
\item Normal extremal trajectories lose their local optimality (both strong
and weak) at the first conjugate point, see \cite{agrachev_sachkov}.
\item Along each normal extremal trajectory, conjugate times are isolated
one from another, see \cite{Euler_Conj}, \cite{cut_sre1}.
\end{enumerate}
We will apply the following statement for the proof of absence of
conjugate points via homotopy. 
\begin{proposition}
\label{prop:Coro2.2}\text{\emph{(Corollaries 2.2 and 2.3 \cite{Euler_Conj}).}}
Let $(u^{s}(t),\lambda_{t}^{s}),\, t\in[0,+\infty),\, s\in[0,1]$,
be continuous in parameter $s$ family of normal extremal pairs in
the optimal control problem \text{\emph{(\ref{eq:4.1})--(\ref{eq:4.3})}}
satisfying hypotheses \textbf{\emph{(H1)--(H4)}}. Let $s\mapsto t_{1}^{s}$
be a continuous function, $s\in[0,1]$, $t_{1}^{s}\in(0,+\infty)$.
Assume that for any $s\in[0,1]$ the instant $t=t_{1}^{s}$ is not
a conjugate time along the extremal $\lambda_{t}^{s}$ . If the extremal
trajectory $q^{0}(t)=\pi(\lambda_{t}^{0}),\, t\in(0,t_{1}^{0}],$
does not contain conjugate points, then the extremal trajectory $q^{1}(t)=\pi(\lambda_{t}^{1}),\, t\in(0,t_{1}^{1}]$,
also does not contain conjugate points.
\end{proposition}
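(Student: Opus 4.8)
The statement is Corollaries 2.2--2.3 of \cite{Euler_Conj}, so the plan is to reproduce the homotopy argument used there: one shows that the number of conjugate points, counted with multiplicity, is an integer-valued quantity that varies continuously along the family, hence is constant, so that its vanishing at $s=0$ forces it to vanish at $s=1$. Concretely, for $s\in[0,1]$ set
\[
N(s)=\#\{\, t\in(0,t_1^s] : t \text{ is a conjugate time along } \lambda^s \,\},
\]
counted with multiplicity. Since $t_1^s$ is assumed never to be a conjugate time, $N(s)$ equals the same count over the open interval $(0,t_1^s)$. Once we know $N:[0,1]\to\mathbb{Z}_{\geq 0}$ is constant, the conclusion follows: $q^0(t)=\pi(\lambda_t^0)$ having no conjugate points on $(0,t_1^0]$ means $N(0)=0$, hence $N(1)=0$; and since $t_1^1$ is not conjugate either, $q^1(t)=\pi(\lambda_t^1)$, $t\in(0,t_1^1]$, contains no conjugate point.

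First I would record the structural facts that make $N(s)$ well defined. Conjugate times along $\lambda^s$ are the zeros in $t$ of $\det d_{(\lambda^s(0),t)}\mathrm{Exp}$, equivalently the instants at which the Jacobi equation along $\lambda_t^s$ has a nontrivial solution vanishing at both $0$ and $t$. Under the strong Legendre condition \textbf{(H1)} and the corank-one hypothesis \textbf{(H3)}, the Morse-type index theorem for smooth extremals (see \cite{agrachev_sachkov}, and the discussion preceding this proposition) applies: the number of conjugate points in $(0,t)$ equals the negative index of the second variation of the problem with terminal time $t$; this index is finite and non-decreasing in $t$, and at each conjugate time it jumps by exactly the multiplicity. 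In particular conjugate times are isolated, $N(s)<\infty$, and — again by \textbf{(H1)} — there is a neighbourhood of $t=0$ free of conjugate times, which by continuity of the data may be taken uniform for $s$ near any fixed $s_0$. Hypothesis \textbf{(H4)} ensures $\lambda_t^s$, and hence the Jacobi equation, is defined on all of $[0,t_1^s]$, so no conjugate time escapes to infinity.

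Next I would prove $N$ is locally constant. Fix $s_0$ and work with the terminal time directly: the Hessian $Q^s$ of the second variation with terminal time $t_1^s$ depends continuously on $s$ (continuous dependence of the Hamiltonian flow and of Jacobi-equation solutions on $s$, together with continuity of $s\mapsto t_1^s$), and by assumption its kernel is trivial for every $s$, precisely because $t_1^s$ is never conjugate. A continuously varying family of quadratic forms whose kernel stays trivial has locally constant negative index; combined with the index theorem this shows $N(s)$ is locally constant on $[0,1]$, hence constant by connectedness. Equivalently, one can argue directly that the conjugate times in $(0,t_1^s]$ move continuously with $s$ and, being unable to cross the moving endpoint $t_1^s$ (never conjugate) or to slip out through the uniform lower bound near $t=0$, neither appear nor disappear as $s$ varies.

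I expect the decisive step to be this local constancy, i.e.\ controlling how conjugate times can be created or destroyed when $s$ changes. The delicate feature is that the interval $(0,t_1^s]$ itself moves, so one must forbid a conjugate time sliding off either end; the right end is handled by the standing assumption that $t_1^s$ is never conjugate, and the left end by the uniform-in-$s$ lower bound on the first conjugate time coming from \textbf{(H1)}--\textbf{(H3)}. Making ``the conjugate-time count is continuous'' rigorous is cleanest through the index theorem, where the quantity is visibly an integer and visibly continuous in $s$, rather than through a direct root-tracking argument; this is also why all four hypotheses enter — \textbf{(H1)} for the Legendre sign that makes the Hessian a genuine Morse form with finite negative index, \textbf{(H2)}--\textbf{(H3)} for the reduction of the optimality and conjugate-point analysis to that index, and \textbf{(H4)} for completeness of the extremal and Jacobi flows over the whole parameter interval.
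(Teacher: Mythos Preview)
The paper does not prove this proposition at all: it is quoted verbatim as Corollaries~2.2--2.3 of \cite{Euler_Conj} and used as a black box, with no argument supplied. So there is no ``paper's own proof'' to compare against; your proposal is a reconstruction of the argument from the cited reference rather than something to be matched to the present text.

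That said, your sketch is the right one and captures the content of the cited corollaries: homotopy invariance of the Maslov index (equivalently, of the number of conjugate points counted with multiplicity) under the hypotheses \textbf{(H1)}--\textbf{(H4)}, with the moving endpoint $t_1^s$ assumed non-conjugate precisely so that no root can enter or leave through the right boundary, and the strong Legendre condition \textbf{(H1)} providing the uniform conjugate-free neighbourhood of $t=0$ on the left. Framing the local constancy via the negative index of the second variation, rather than root-tracking, is exactly how \cite{Euler_Conj} and \cite{agrachev_sachkov} organize it. One small caveat: your appeal to a ``uniform-in-$s$ lower bound'' near $t=0$ deserves a line of justification (compactness of $[0,1]$ plus continuity of the family and the Legendre estimate), but this is routine and is handled in the cited sources.
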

It can be easily checked that the sub-Riemannian problem (\ref{eq:2.1})--(\ref{eq:2.5})
satisfies hypotheses (\textbf{H1})--(\textbf{H4}) and therefore Proposition
\ref{prop:Coro2.2} can be used to prove bounds of the first conjugate
time $t_{1}^{\textrm{conj}}$.

\subsection{Bounds on $t_{1}^{\textrm{conj}}$ for $\lambda\in C_{1}$}

Using the elliptic coordinates $(\varphi,k)$ defined in Section 5.3.1
\cite{Extremal_Pseudo_Euclid} and parametrization of extremal trajectories
(\ref{eq:2.8}), the Jacobian of the exponential mapping is given
as:
\begin{eqnarray}
J & = & \frac{\partial(x_{t},y_{t},z_{t})}{\partial(\varphi,k,t)}=\frac{J_{1}(p,\tau,k)}{(1-k^{2})^{2}(1-k\mathrm{sn}p\,\mathrm{sn}\tau)^{2}},\label{eq:4.5}\\
J_{1}(p,\tau,k) & = & -4k(\alpha_{1}+\alpha_{2}+\alpha_{3}),\label{eq:4.6}\\
\alpha_{1}(p,\tau,k) & = & \mathrm{sn}p\mathrm{\, cn}p\mathrm{\, dn}p\left(2\mathrm{E}(p)-p+k^{2}p\right),\nonumber \\
\alpha_{2}(p,\tau,k) & = & -\mathrm{dn}^{2}p\,\mathrm{sn}^{2}p-k^{2}\mathrm{sn}^{2}p\,\mathrm{cn}^{2}\tau,\nonumber \\
\alpha_{3}(p,\tau,k) & = & \mathrm{E}(p)\left(\mathrm{sn}^{2}p-\mathrm{sn}^{2}\tau\right)\left(\mathrm{E}(p)-p+k^{2}p\right),\nonumber 
\end{eqnarray}
where $p$ and $\tau$ for $\lambda\in C_{1}$ were defined in (\ref{eq:2.17}).
Plots of $J_{1}(p,\tau,k)$ are shown in Figures \ref{fig:-J1_5},
\ref{fig:-J1_6}.
\begin{figure}
\centering{}\includegraphics[scale=0.5]{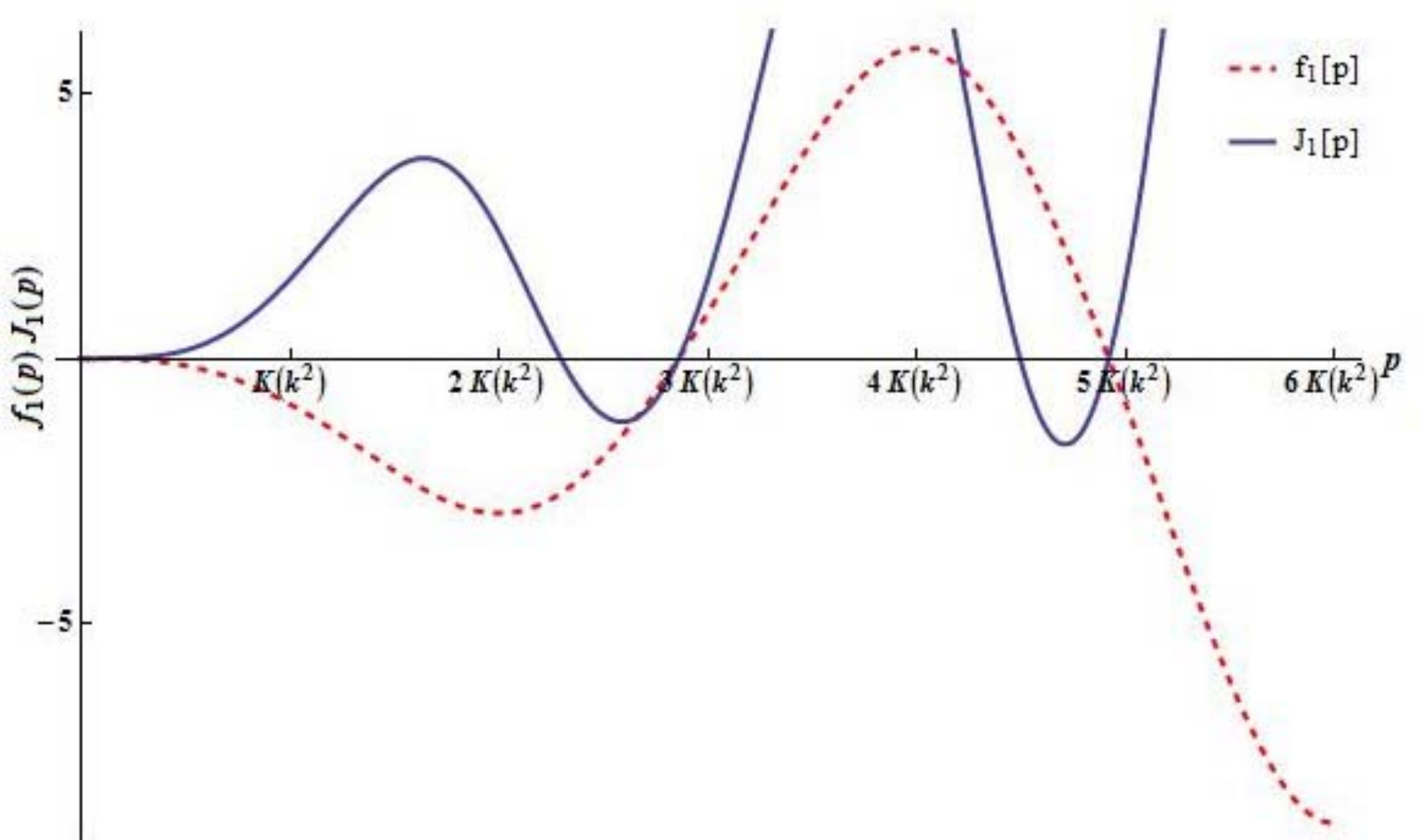}\protect\caption{\label{fig:-J1_5}$J_{1}(p,\tau,k)$ and $f_{1}(p)$ for $k$ = 0.5}
\end{figure}
\begin{figure}
\centering{}\includegraphics[scale=0.5]{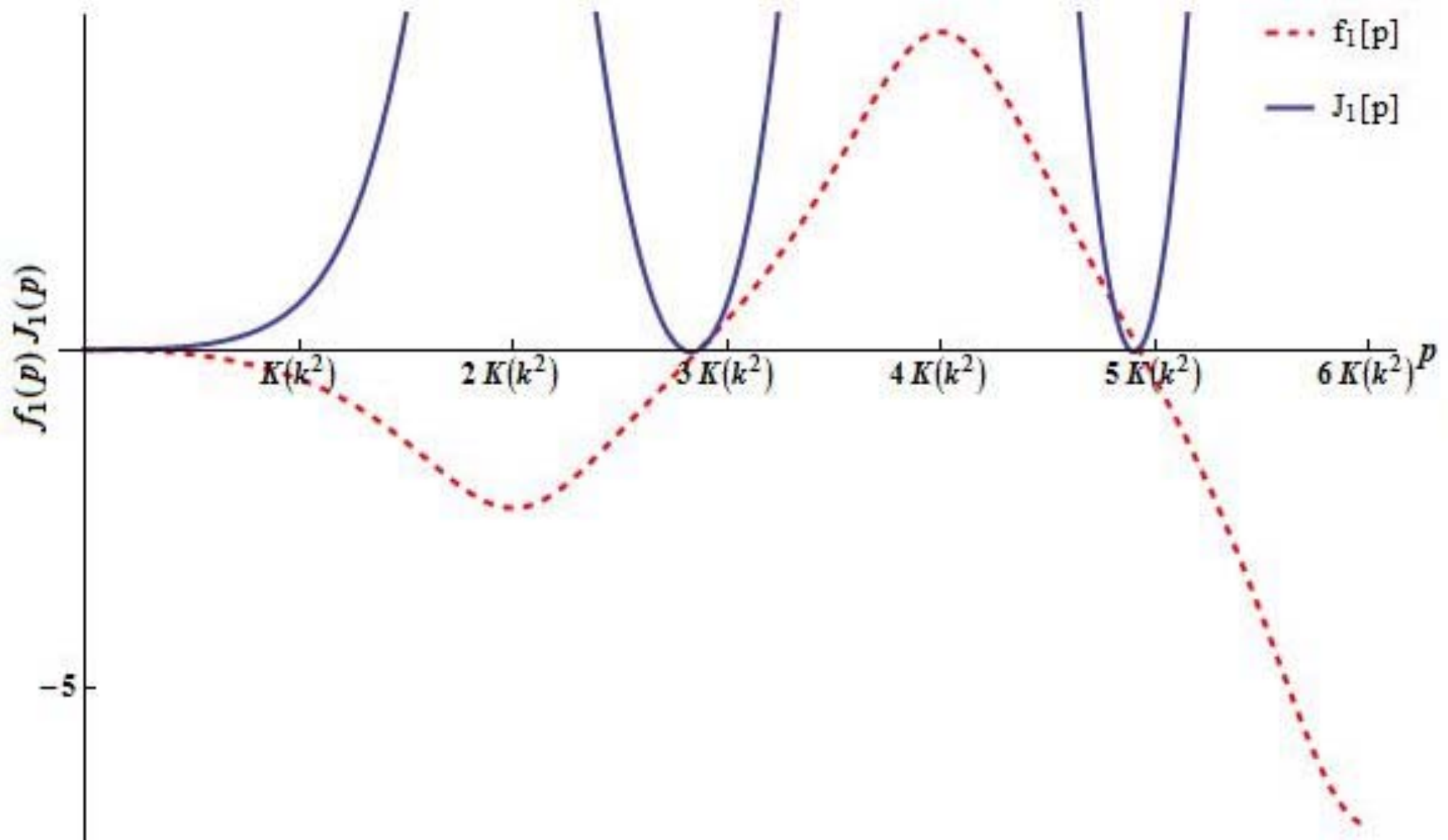}\protect\caption{\label{fig:-J1_6}$J_{1}(p,\tau,k)$ and $f_{1}(p)$ for $k$ = 0.9}
\end{figure}

\begin{lemma}
\label{lem:4.1}There exists $\widehat{k}\in(0,1)$ such that for
all $k\in(0,\widehat{k})$ and $p\in(0,\pi)$, the function $J_{1}$
is positive.\end{lemma}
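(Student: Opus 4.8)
The plan is to analyze the sign of $J_{1}(p,\tau,k)=-4k(\alpha_{1}+\alpha_{2}+\alpha_{3})$ in the regime where $k$ is small, by treating it as a perturbation of the $k=0$ case. When $k=0$ we have $K(0)=\pi/2$, $\mathrm{sn}\,p=\sin p$, $\mathrm{cn}\,p=\cos p$, $\mathrm{dn}\,p=1$, and $\mathrm{E}(p)=p$; the range $p\in(0,\pi)$ corresponds exactly to the subinterval below the first Maxwell time $4K=2\pi$ scaled by $p=t/2$. First I would substitute these limiting values into $\alpha_{1},\alpha_{2},\alpha_{3}$ to obtain $J_{1}(p,\tau,0)$ in closed form: $\alpha_{1}\to \sin p\cos p\cdot p$, $\alpha_{2}\to -\sin^{2}p$ (the $k^{2}$ term drops), and $\alpha_{3}\to p(\sin^{2}p-\sin^{2}\tau)\cdot 0=0$. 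Hence $\alpha_{1}+\alpha_{2}+\alpha_{3}\to p\sin p\cos p-\sin^{2}p=\sin p\,(p\cos p-\sin p)$. Since $p\cos p-\sin p<0$ for all $p\in(0,\pi)$ (its derivative is $-p\sin p\le 0$ and it vanishes at $p=0$) and $\sin p>0$ on $(0,\pi)$, we get $\alpha_{1}+\alpha_{2}+\alpha_{3}<0$ on $(0,\pi)$, so $J_{1}(p,\tau,0)>0$ there; moreover this bound is uniform away from the endpoints.

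The second step is to upgrade this pointwise positivity at $k=0$ to positivity on a whole strip $k\in(0,\widehat k)$, $p\in(0,\pi)$. The difficulty is that $J_{1}(p,\tau,0)$ degenerates to $0$ at $p=0$ and $p=\pi$, so a naive "continuity plus compactness" argument fails near those endpoints. I would handle this by factoring out the vanishing: write $\alpha_{1}+\alpha_{2}+\alpha_{3}=\mathrm{sn}\,p\cdot h(p,\tau,k)$ near $p=0$ where $h$ is analytic and $h(0,\tau,0)<0$ (one checks $\alpha_{2},\alpha_{3}$ each carry a factor $\mathrm{sn}^{2}p$ or $\mathrm{sn}\,p$ up to the $\mathrm{E}(p)-p+k^2 p$ terms, and $\alpha_{1}$ carries $\mathrm{sn}\,p$ explicitly), so that the sign near $p=0$ is governed by $h$, which is bounded away from $0$ for $k$ small. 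A symmetric argument, using the quasi-periodicity relations $\mathrm{sn}(2K-p)=\mathrm{sn}\,p$, $\mathrm{cn}(2K-p)=-\mathrm{cn}\,p$, etc., controls the behavior as $p\to\pi$ when $k$ is small (note $\pi<2K(k)$ for all $k<1$, with $2K(k)\to\pi$ as $k\to 0$, so the relevant subinterval is safely strictly inside $(0,2K)$ for small $k$). On the complementary compact set $p\in[\delta,\pi-\delta]$ one uses that $(p,\tau,k)\mapsto \alpha_{1}+\alpha_{2}+\alpha_{3}$ is continuous (indeed real-analytic), that it is strictly negative there at $k=0$ uniformly in $\tau$ (since $\tau$ ranges over a circle, hence compact, and the only $\tau$-dependence at $k=0$ is the $k^{2}$-suppressed term), and then a uniform continuity/compactness argument gives an $\widehat k_{\delta}$ below which it stays negative.

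Combining the endpoint analysis near $p=0$ and $p=\pi$ with the interior compact-set estimate, I would take $\widehat k$ to be the minimum of the finitely many thresholds produced, and conclude that $\alpha_{1}+\alpha_{2}+\alpha_{3}<0$, hence $J_{1}>0$, throughout $(0,\pi)\times(2S^{1}_{\gamma})\times(0,\widehat k)$. I expect the main obstacle to be the endpoint behavior: making the factorization $\alpha_{1}+\alpha_{2}+\alpha_{3}=\mathrm{sn}\,p\cdot h(p,\tau,k)$ genuinely uniform in $\tau$ and $k$ near $p=0$, and the analogous statement near $p=\pi$, requires care because the term $\alpha_{3}$ involves $\mathrm{E}(p)-p+k^{2}p$, which is $O(k^{2})$ but whose interaction with $\mathrm{sn}^{2}p-\mathrm{sn}^{2}\tau$ must be bounded on the full parameter range; once that uniformity is in hand the rest is routine compactness.
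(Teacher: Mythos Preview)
Your overall strategy coincides with the paper's: compute the $k\to 0$ limit $J_{1}=4k\sin p\,(\sin p-p\cos p)$, observe it is positive on $(0,\pi)$, and then patch the degeneration at the endpoints before invoking continuity/compactness. The paper does exactly this, handling the endpoint $p=0$ via the joint Taylor expansion $J_{1}=\tfrac{4}{3}kp^{4}+o\!\left((k^{2}+p^{2})^{4}\right)$ as $(k,p)\to(0,0)$.

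There is, however, a genuine gap in your endpoint argument at $p=0$. You propose to write $\alpha_{1}+\alpha_{2}+\alpha_{3}=\mathrm{sn}\,p\cdot h(p,\tau,k)$ with $h$ analytic and $h(0,\tau,0)<0$. But at $k=0$ you yourself computed $\alpha_{1}+\alpha_{2}+\alpha_{3}=\sin p\,(p\cos p-\sin p)$, so $h(p,\tau,0)=p\cos p-\sin p$ and hence $h(0,\tau,0)=0$, not $<0$. In fact the cancellation persists for all $k$: expanding each $\alpha_{i}$ to order $p^{2}$ gives
\[
\alpha_{1}\sim p^{2}(1+k^{2}),\qquad \alpha_{2}\sim -p^{2}(1+k^{2}\mathrm{cn}^{2}\tau),\qquad \alpha_{3}\sim -k^{2}p^{2}\,\mathrm{sn}^{2}\tau,
\]
and $\mathrm{sn}^{2}\tau+\mathrm{cn}^{2}\tau=1$ forces the $p^{2}$ coefficient of the sum to vanish identically. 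The true order of vanishing at $p=0$ is $p^{4}$ (all $\alpha_{i}$ are even in $p$), which is precisely what the paper's expansion $J_{1}=\tfrac{4}{3}kp^{4}+\ldots$ records. So your factorization must pull out $p^{4}$ (or $\mathrm{sn}^{4}p$), after which the residual factor is $-\tfrac{1}{3}+O(k^{2}+p^{2})$, uniformly in $\tau$, and your compactness argument goes through. Your treatment of the $p=\pi$ endpoint is fine as stated, since there the $k=0$ limit has only a simple zero with nonvanishing cofactor $p\cos p-\sin p\big|_{p=\pi}=-\pi$.
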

\begin{proof}
The Taylor expansions of $J_{1}$ are given as:

\begin{eqnarray}
J_{1} & = & 4k\sin p(-p\cos p+\sin p),\qquad k\rightarrow0,\label{eq:4.7}\\
J_{1} & = & \frac{4}{3}kp^{4}+o(k^{2}+p^{2})^{4},\qquad k^{2}+p^{2}\rightarrow0.\label{eq:4.8}
\end{eqnarray}
From (\ref{eq:4.7}) it can be readily seen that in limit passage
of $k\rightarrow0^{+}$, $J_{1}>0$ for $p\in(0,\pi)$. Note that
$2K(0)=\pi$. Similarly from (\ref{eq:4.8}) it follows that $J_{1}>0$
when $k^{2}+p^{2}\rightarrow0^{+}$. \hfill$\square$\end{proof}

\begin{lemma}
\label{lem:4.4}If $k\in(0,1)$ and $p=2nK(k)$ for $n\in\mathbb{Z}$,
then $J_{1}\geq0$.\end{lemma}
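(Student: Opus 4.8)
\textbf{Proof plan for Lemma \ref{lem:4.4}.}

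The goal is to evaluate $J_1(p,\tau,k)$ at $p = 2nK(k)$ and show the result is nonnegative. The plan is to substitute the special values of the Jacobi elliptic functions at $p = 2nK$ directly into the formula (\ref{eq:4.6}) for $J_1$. Recall that $\mathrm{sn}(2nK) = 0$, $\mathrm{cn}(2nK) = 1$ (for even multiples of $K$ the sign is $+1$ since $\mathrm{cn}$ has period $4K$; more precisely $\mathrm{cn}(2nK)=(-1)^n$, but it enters only through $\mathrm{sn}\,\mathrm{cn}\,\mathrm{dn}$ and $\mathrm{cn}^2$ below), $\mathrm{dn}(2nK) = 1$, and $\mathrm{E}(2nK) = 2n\mathrm{E}(K) = 2nE$, where $E = E(k)$ is the complete elliptic integral of the second kind. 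With $\mathrm{sn}\,p = 0$ the terms $\alpha_1$ and the $-\mathrm{dn}^2 p\,\mathrm{sn}^2 p - k^2\mathrm{sn}^2 p\,\mathrm{cn}^2\tau$ part of $\alpha_2$ both vanish, so only $\alpha_3$ survives.

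First I would compute $\alpha_3$ at $p = 2nK$. We get
\begin{equation*}
\alpha_3(2nK,\tau,k) = \mathrm{E}(2nK)\bigl(0 - \mathrm{sn}^2\tau\bigr)\bigl(\mathrm{E}(2nK) - 2nK + 2nk^2 K\bigr) = -2nE\,\mathrm{sn}^2\tau\,\bigl(2nE - 2nK(1-k^2)\bigr).
\end{equation*}
Factoring out $2n$ this is $-4n^2 E\,\mathrm{sn}^2\tau\,(E - K(1-k^2)) = -4n^2 E\,\mathrm{sn}^2\tau\,(E - k'^2 K)$, where $k'^2 = 1-k^2$. Hence
\begin{equation*}
J_1(2nK,\tau,k) = -4k\,\alpha_3 = 16 k n^2 E\,\mathrm{sn}^2\tau\,\bigl(E - k'^2 K\bigr).
\end{equation*}
The factors $16k$, $n^2$, $E$, and $\mathrm{sn}^2\tau$ are all nonnegative for $k\in(0,1)$, so the sign of $J_1$ is governed by the sign of $E(k) - k'^2 K(k) = E(k) - (1-k^2)K(k)$.

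The main step, then, is to show $E(k) \ge (1-k^2)K(k)$ for $k\in(0,1)$. This is a classical inequality for complete elliptic integrals; I would prove it by considering $h(k) = E(k) - (1-k^2)K(k)$, noting $h(0) = \frac{\pi}{2} - \frac{\pi}{2} = 0$, and differentiating using the standard derivatives $\frac{dE}{dk} = \frac{E - K}{k}$ and $\frac{dK}{dk} = \frac{E - (1-k^2)K}{k(1-k^2)}$. A short computation gives $h'(k) = kK(k) > 0$ on $(0,1)$, so $h(k) > 0$ for $k\in(0,1)$, and in particular $h \ge 0$, which yields $J_1(2nK,\tau,k) \ge 0$ (with equality only in the degenerate cases $n = 0$ or $\mathrm{sn}\,\tau = 0$). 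The only mild obstacle is bookkeeping the derivative identities for $K$ and $E$ correctly and checking the limiting value at $k=0$; everything else is a direct substitution. \hfill$\square$
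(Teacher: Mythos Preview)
Your proposal is correct and follows essentially the same route as the paper's own proof: direct substitution of $p=2nK(k)$ into (\ref{eq:4.6}) yields $J_1 = 16kn^2 E(k)\bigl(E(k)-(1-k^2)K(k)\bigr)\mathrm{sn}^2\tau$, and the nonnegativity then reduces to the inequality $E(k)\geq(1-k^2)K(k)$, which the paper proves exactly as you do by checking that $f(0)=0$ and $f'(k)=kK(k)>0$.
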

\begin{proof}
Direct substitution of $p=2nK(k)$ to (\ref{eq:4.6}) gives:
\begin{equation}
J_{1}=16n^{2}kE(k)\left(E(k)-(1-k^{2})K(k)\right)\mathrm{sn}^{2}\tau.\label{eq:4.9}
\end{equation}
Since $f(k)=E(k)-(1-k^{2})K(k)>0$ because $f(0)=0$ and $f^{\prime}(k)=kK(k)>0$,
therefore, $J_{1}\geq0$. \hfill$\square$\end{proof}

\begin{lemma}
\label{lem:upper_bound}The system of equations
\begin{equation}
f_{1}(p,k)=0,\qquad J=0,\label{eq:4.10}
\end{equation}
is incompatible for $k\in(0,1)$, $p>0$.\end{lemma}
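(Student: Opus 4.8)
The plan is to substitute the relation $f_1(p)=0$ into the Jacobian, bring it to a factored form, and reduce the incompatibility to a one‑variable statement about Jacobi functions. First, for $k\in(0,1)$ one has $|k\,\mathrm{sn}\,p\,\mathrm{sn}\,\tau|<1$, so the denominator in (\ref{eq:4.5}) is strictly positive and $J=0\Leftrightarrow J_1=0$. If $f_1(p)=0$ with $p>0$, then by Proposition~\ref{prop:f1p} $p=p_1^n(k)\in(2nK,(2n+1)K)$ for some $n\in\mathbb{N}$, so $\mathrm{sn}\,p\neq0$, $\mathrm{cn}\,p\neq0$ and $\mathrm{dn}\,p>0$. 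I would use $f_1(p)=0$ to write $\mathrm{E}(p)=\mathrm{sn}\,p\,\mathrm{dn}\,p/\mathrm{cn}\,p$, substitute this into $J_1=-4k(\alpha_1+\alpha_2+\alpha_3)$ from (\ref{eq:4.6}), and reduce with $\mathrm{sn}^2+\mathrm{cn}^2=1$ and $\mathrm{dn}^2+k^2\mathrm{sn}^2=1$; the expected outcome is the factored form
\begin{equation*}
J_1\big|_{f_1(p)=0}=-\frac{4k(1-k^2)\,\mathrm{sn}\,p\,\mathrm{cn}^2\tau}{\mathrm{cn}^2 p}\,\phi(p),\qquad \phi(p):=\mathrm{sn}\,p-p\,\mathrm{cn}\,p\,\mathrm{dn}\,p.
\end{equation*}
Since $\mathrm{sn}\,p\neq0$, it then remains to prove $\phi(p)\neq0$ whenever $f_1(p)=0$, $p>0$; granting this, $J_1|_{f_1(p)=0}$ vanishes only where $\mathrm{cn}\,\tau=0$, i.e. at the limit points of Section~\ref{sub:Lim_Pts_Max_Set}, which lie outside $\mathrm{MAX}^1\cap N_1$ by Theorem~\ref{thm:The-Maxwell-strata}, so $J$ does not vanish on the Maxwell stratum.

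The heart of the argument is the algebraic identity
\begin{equation*}
f_1(p)\equiv \mathrm{cn}\,p\cdot h(p)-\mathrm{dn}\,p\cdot \phi(p),\qquad h(p):=\mathrm{E}(p)-p\,\mathrm{dn}^{2}p,
\end{equation*}
which I would verify by direct expansion. On $\{f_1=0\}$ it gives $\mathrm{cn}\,p\,h(p)=\mathrm{dn}\,p\,\phi(p)$, and since $\mathrm{cn}\,p\neq0\neq\mathrm{dn}\,p$ there, $\phi$ vanishes iff $h$ vanishes. So suppose, for contradiction, that some $p^*>0$ satisfies $f_1(p^*)=\phi(p^*)=h(p^*)=0$. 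Differentiating the identity and evaluating at $p^*$ (where the $h$- and $\phi$-terms drop) gives $f_1'(p^*)=\mathrm{cn}\,p^*\,h'(p^*)-\mathrm{dn}\,p^*\,\phi'(p^*)$; plugging in the routine formulas $h'(p)=2k^2 p\,\mathrm{sn}\,p\,\mathrm{cn}\,p\,\mathrm{dn}\,p$ and $\phi'(p)=p\,\mathrm{sn}\,p\bigl((1-k^2)+2k^2\mathrm{cn}^{2}p\bigr)$ this collapses to $f_1'(p^*)=-(1-k^2)\,p^*\,\mathrm{sn}\,p^*\,\mathrm{dn}\,p^*$.

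On the other hand, a direct differentiation (using $\mathrm{E}'=\mathrm{dn}^{2}$) gives $f_1'(p)=-\mathrm{sn}\,p\bigl(\mathrm{dn}\,p\,\mathrm{E}(p)-k^2\mathrm{sn}\,p\,\mathrm{cn}\,p\bigr)=-\mathrm{sn}\,p\,f_2(p)$. Equating the two expressions for $f_1'(p^*)$ and cancelling $\mathrm{sn}\,p^*\neq0$ yields $f_2(p^*)=(1-k^2)\,p^*\,\mathrm{dn}\,p^*$. But the proof of Lemma~\ref{lem:f2p} shows $g_2(p):=f_2(p)/\mathrm{dn}\,p$ satisfies $g_2(0)=0$ and $g_2'(p)=(1-k^2)/\mathrm{dn}^{2}p$, hence $g_2(p)=(1-k^2)\int_0^p \frac{dt}{\mathrm{dn}^{2}t}$; the relation just obtained then forces $\int_0^{p^*}\frac{dt}{\mathrm{dn}^{2}t}=p^*$, which is impossible for $p^*>0$ because $1/\mathrm{dn}^{2}t\geq1$ with strict inequality except at $t\in 2K\mathbb{Z}$. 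This contradiction proves $\phi(p)\neq0$ on $\{f_1=0,\ p>0\}$, completing the argument.

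I expect the main obstacle to be finding the identity $f_1=\mathrm{cn}\,p\,h-\mathrm{dn}\,p\,\phi$ and realizing that differentiating it at a hypothetical common zero over‑determines the system down to the manifestly false relation $\int_0^{p^*}\frac{dt}{\mathrm{dn}^{2}t}=p^*$; once that structure is in place, the remaining steps — the reduction of $J_1|_{f_1=0}$ to its factored form and the derivations of $h'$, $\phi'$ and $f_1'$ — are mechanical elliptic‑function computations.
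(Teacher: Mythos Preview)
Your argument is correct, and in fact more transparent than the paper's. Both proofs reduce the claim to the same one-variable statement: the pair $f_1(p)=0$ and $\phi(p)=\mathrm{sn}\,p-p\,\mathrm{cn}\,p\,\mathrm{dn}\,p=0$ has no solution with $p>0$, $k\in(0,1)$. You derive this reduction explicitly by substituting $\mathrm{E}(p)=\mathrm{sn}\,p\,\mathrm{dn}\,p/\mathrm{cn}\,p$ into $J_1$ and factoring; the paper simply asserts that after the amplitude change $u=\mathrm{am}(p,k)$ the system becomes $E(u,k)\cos u=\sin u\sqrt{1-k^2\sin^2 u}$ and $F(u,k)\cos u\sqrt{1-k^2\sin^2 u}=\sin u$, the second of which is exactly $\phi(p)=0$. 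Your factorization also makes clear what the lemma really says: on $\{f_1=0\}$ the Jacobian vanishes only on the hypersurface $\mathrm{cn}\,\tau=0$, which is precisely the conjugate locus of Proposition~\ref{prop:conj_pt_3.6} and lies outside $\mathrm{MAX}^1\cap N_1$.

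Where the two proofs really diverge is in ruling out a common zero of $f_1$ and $\phi$. The paper works in the $u$ variable and splits into four cases ($0<u<\pi/2$; $u\in\frac{\pi}{2}\mathbb{N}$; $\pi/2<u<\pi$; $u>\pi$), handling each by a separate integral manipulation or sign argument, with the last case requiring a somewhat delicate estimate comparing $\frac{F}{1+EF}$ against $\int_0^u\frac{\cos^2\varphi}{\sqrt{1-k^2\sin^2\varphi}}\,d\varphi$. Your route is uniform: the identity $f_1=\mathrm{cn}\,p\cdot h-\mathrm{dn}\,p\cdot\phi$ shows a common zero forces $h(p^*)=0$ too, and differentiating the identity at $p^*$ over-determines the system down to $\int_0^{p^*}\!dt/\mathrm{dn}^2 t=p^*$, which is impossible since the integrand strictly exceeds $1$ off a discrete set. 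This avoids all case analysis and gives a cleaner contradiction; the paper's approach is more elementary in that it never differentiates, but it is longer and the $u>\pi$ case is noticeably more intricate.
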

\begin{proof}
We denote
\[
E(u,k)=\int\limits _{0}^{u}\sqrt{1-k^{2}\sin^{2}\varphi}\, d\varphi,\quad F(u,k)=\int\limits _{0}^{u}\frac{d\varphi}{\sqrt{1-k^{2}\sin^{2}\varphi}},
\]
The system of equations (\ref{eq:4.10}), after the change $p=\mathrm{am}(u,k)$,
turns into: 
\begin{equation}
\begin{cases}
E(u,k)\cos u=\sqrt{1-k^{2}\sin^{2}u}\,\sin u,\\
F(u,k)\sqrt{1-k^{2}\sin^{2}u}\,\cos u=\sin u.
\end{cases}\label{eq:4.11}
\end{equation}
We prove that system (\ref{eq:4.11}) is incompatible for $k\in(0,1)$,
$u>0$. 

(1)$\quad$Let $0<u<\pi/2$. System (\ref{eq:4.11}) implies the equation:
\begin{align*}
\frac{E(u,k)}{\sqrt{1-k^{2}\sin^{2}u}} & =F(u,k)\sqrt{1-k^{2}\sin^{2}u},
\end{align*}
which is equivalent to the following equations:

\begin{eqnarray*}
 &  & \int\limits _{0}^{u}\frac{\sqrt{1-k^{2}\sin^{2}\varphi}}{\sqrt{1-k^{2}\sin^{2}u}}\, d\varphi=\int\limits _{0}^{u}\frac{\sqrt{1-k^{2}\sin^{2}u}}{\sqrt{1-k^{2}\sin^{2}\varphi}}\, d\varphi,\\
 &  & \int\limits _{0}^{u}\left(\frac{\sqrt{1-k^{2}\sin^{2}\varphi}}{\sqrt{1-k^{2}\sin^{2}u}}-\frac{\sqrt{1-k^{2}\sin^{2}u}}{\sqrt{1-k^{2}\sin^{2}\varphi}}\right)\, d\varphi=0,\\
 &  & \int\limits _{0}^{u}\frac{1-k^{2}\sin^{2}\varphi-(1-k^{2}\sin^{2}u)}{\sqrt{1-k^{2}\sin^{2}u}\sqrt{1-k^{2}\sin^{2}\varphi}}\, d\varphi=0,\\
 &  & \int\limits _{0}^{u}\frac{\sin^{2}u-\sin^{2}\varphi}{1-k^{2}\sin^{2}\varphi}\, d\varphi=0.
\end{eqnarray*}
The last equality is impossible since the function under the integral
is positive for $0<\pi<u$ (when $0<u<\pi/2$).

(2)$\quad$Equations of system (\ref{eq:4.11}) are violated when
$\cos u=0$ or $\sin u=0$, i.e., at the points $u=\frac{\pi k}{2}$,
$k\in\mathbb{N}$. This is checked immediately.

(3)$\quad$For $\frac{\pi}{2}<u<\pi$ system (\ref{eq:4.11}) is incompatible
since the function $\cos$ is negative, while the functions $\sin$,
$E$ and $F$ are positive.

(4)$\quad$It remains to consider the case $u>\pi$ for $\sin u\cos u\neq0$.
In this case we multiply the equations of the system, divide the first
equation by the second one, and get the following system: 
\[
\left\{ \begin{matrix} & \cos^{2}u\, E(u,k)F(u,k)=\sin^{2}u,\\
 & \frac{E(u,k)}{F(u,k)\sqrt{1-k^{2}\sin^{2}u}}=\sqrt{1-k^{2}\sin^{2}u}.
\end{matrix}\right.\quad\Leftrightarrow\quad\left\{ \begin{matrix} & E(u,k)F(u,k)=\tan^{2}u,\\
 & E(u,k)=F(u,k)(1-k^{2}\sin^{2}u).
\end{matrix}\right.
\]
The equality $1+\tan^{2}u=\cos^{-2}u$ and the equation $E(u,k)F(u,k)=\tan^{2}u$
imply: 
\[
\cos^{2}u=\frac{1}{1+E(u,k)F(u,k)}.
\]
 Since $1-k^{2}\sin^{2}u=1-k^{2}+k^{2}\cos^{2}u=1-k^{2}+\frac{k^{2}}{1+E(u,k)F(u,k)}$,
then the equation $E(u,k)=F(u,k)(1-k^{2}\sin^{2}u)$ is rewritten
as: 
\begin{equation}
E(u,k)=F(u,k)(1-k^{2})+\frac{k^{2}F(u,k)}{1+E(u,k)F(u,k)}.\label{eq:4.12}
\end{equation}
We have 
\begin{align*}
E(u,k)-(1-k^{2})F(u,k) & =\int\limits _{0}^{u}\left(\sqrt{1-k^{2}\sin^{2}\varphi}-\frac{1-k^{2}}{\sqrt{1-k^{2}\sin^{2}\varphi}}\right)\, d\varphi\\
 & =\int\limits _{0}^{u}\frac{1-k^{2}\sin^{2}\varphi-(1-k^{2})}{\sqrt{1-k^{2}\sin^{2}\varphi}}\, d\varphi=\int\limits _{0}^{u}\frac{k^{2}-k^{2}\sin^{2}\varphi}{\sqrt{1-k^{2}\sin^{2}\varphi}}\, d\varphi\\
 & =k^{2}\int\limits _{0}^{u}\frac{\cos^{2}\varphi}{\sqrt{1-k^{2}\sin^{2}\varphi}}\, d\varphi.
\end{align*}
Consequently, equation (\ref{eq:4.12}) takes the form: 
\[
k^{2}\int\limits _{0}^{u}\frac{\cos^{2}\varphi}{\sqrt{1-k^{2}\sin^{2}\varphi}}\, d\varphi=\frac{k^{2}F(u,k)}{1+E(u,k)F(u,k)},
\]
and after dividing both sides by $k^{2}$ we get: 
\[
\int\limits _{0}^{u}\frac{\cos^{2}\varphi}{\sqrt{1-k^{2}\sin^{2}\varphi}}\, d\varphi=\frac{F(u,k)}{1+E(u,k)F(u,k)}.
\]
Since $\frac{1}{\sqrt{1-k^{2}\sin^{2}\varphi}}>1$, $u>\pi$ then
there hold the inequalities: 
\[
\frac{\cos^{2}\varphi}{\sqrt{1-k^{2}\sin^{2}\varphi}}>\int\limits _{0}^{u}\cos^{2}\varphi\, d\varphi>\int\limits _{0}^{\pi}\cos^{2}\varphi\, d\varphi=\frac{\pi}{2}.
\]
Consequently, 
\begin{equation}
\frac{F(u,k)}{1+E(u,k)F(u,k)}=\int\limits _{0}^{u}\frac{\cos^{2}\varphi}{\sqrt{1-k^{2}\sin^{2}\varphi}}\, d\varphi>\frac{\pi}{2}.\label{eq:4.13}
\end{equation}
On the other hand, for $u\geq\pi/2$ we have $E(u,k)\geq E(k)>1$.
Consequently, 
\begin{equation}
\frac{F(u,k)}{1+E(u,k)F(u,k)}<\frac{F(u,k)}{1+F(u,k)}<1.\label{eq:4.14}
\end{equation}
Inequalities (\ref{eq:4.13}) and (\ref{eq:4.14}) contradict one
to another. This completes the proof of this lemma. \hfill$\square$\end{proof}

\begin{theorem}
\textcolor{red}{\label{thm:Conj_C1}}The first conjugate time for
$\lambda\in C_{1}$ is bounded as $4K(k)\leq t_{1}^{\mathrm{conj}}(\lambda)\leq2p_{1}^{1}(k)$.
Moreover, 
\begin{eqnarray*}
\lim_{k\rightarrow0^{+}}t_{1}^{\mathrm{conj}}(\lambda) & = & 2\pi,\\
\lim_{k\rightarrow1-0}t_{1}^{\mathrm{conj}}(\lambda) & = & +\infty.
\end{eqnarray*}
\end{theorem}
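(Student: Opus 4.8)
The plan is to combine the lower bound, which follows from the homotopy machinery, with the upper bound, which follows from the Maxwell/conjugate point analysis already in hand, and then to compute the two limits of $2p_1^1(k)$ as $k\to 0^+$ and $k\to 1-0$ using the asymptotics of $K(k)$ and the localization of $p_1^1(k)$ from Proposition~\ref{prop:f1p}.

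\textbf{Lower bound $t_1^{\mathrm{conj}}(\lambda)\ge 4K(k)$.} First I would verify that the sub-Riemannian problem on $\mathrm{SH}(2)$ satisfies hypotheses \textbf{(H1)}--\textbf{(H4)} (this is already asserted in the text after Proposition~\ref{prop:Coro2.2}), so that Proposition~\ref{prop:Coro2.2} applies. The idea is a homotopy in the modulus $k$: for $\lambda\in C_1$ with fixed $k$, consider the family of extremal pairs obtained by letting $k$ vary toward $0$, keeping $t=t_1^{s}$ proportional to $4K(k)$ but stopped strictly before $4K(k)$, say $t_1^s = 4K(k_s)-\epsilon$ or more naturally parametrize so that the endpoint stays at a fixed fraction of $4K$. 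By Lemma~\ref{lem:4.1}, for $k$ near $0$ and $p\in(0,\pi)$ the Jacobian $J_1$ is positive, hence there are no conjugate points on $(0,4K(k))$ for small $k$; along the homotopy, the endpoint $t=t_1^s$ is never a conjugate time because on the half-open interval up to but not including $4K(k)$ one has $p<2K(k)$, where by Lemma~\ref{lem:4.4} (applied with the knowledge that the first positive root of $f_1$ lies in $(2K,3K)$ by Proposition~\ref{prop:f1p}, so $f_1\ne 0$ on $(0,2K)$) together with Lemma~\ref{lem:upper_bound} we can exclude $J=0$. Transporting along the homotopy via Proposition~\ref{prop:Coro2.2} then shows no conjugate points occur on $(0,4K(k))$ for any $k\in(0,1)$, giving $t_1^{\mathrm{conj}}(\lambda)\ge 4K(k)$.

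\textbf{Upper bound $t_1^{\mathrm{conj}}(\lambda)\le 2p_1^1(k)$.} Here I would invoke Proposition~\ref{prop:conj_pt_3.6}(1): when $\nu\in N_1$, $p=p_1^1(k)$ and $\mathrm{cn}\,\tau=0$, the point $q_t=\mathrm{Exp}(\nu)$ is conjugate to $q_0$. Since $p=t/2$ for $\nu\in N_1\cup N_3$ by~(\ref{eq:2.17}), the condition $p=p_1^1(k)$ means $t=2p_1^1(k)$, and one can always choose the initial point $\varphi$ (equivalently $\tau-p$) on a given extremal so that $\mathrm{cn}\,\tau=0$ is attained at that time — or rather, the point $\mathrm{Exp}(\lambda,2p_1^1(k))$ is conjugate for those $\lambda\in C_1$ whose initial data make $\mathrm{cn}\,\tau=0$; for a general geodesic the conjugate point occurs no later than this by the isolation-of-conjugate-times fact and monotonicity. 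The cleanest route is: the conjugate locus described in Proposition~\ref{prop:conj_pt_3.6} contains points with $t=2p_1^1(k)$, hence $t_1^{\mathrm{conj}}(\lambda)\le 2p_1^1(k)$ for the relevant $\lambda$, and by genericity/continuity this bound holds on all of $C_1$.

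\textbf{The two limits.} For $k\to 0^+$: from~(\ref{eq:4.7}), $J_1 = 4k\sin p\,(\sin p - p\cos p)$, whose first positive zero of the factor $\sin p - p\cos p$ — equivalently of $f_1$ in the limit — occurs at $p=\pi$ (since $\tan p = p$ has no root in $(0,\pi)$ and $\sin p - p\cos p\to 0$ as $p\to\pi^-$ gives $\sin\pi - \pi\cos\pi = \pi>0$; the correct limiting root of $f_1(p,k)$ as $k\to 0$ is $p=\pi$, consistent with $2K(0)=\pi$ and $p_1^1\in(2K,3K)$). Thus $p_1^1(k)\to\pi$ and $t_1^{\mathrm{conj}}\to 2\pi$; note this is squeezed between $4K(k)\to 2\pi$ and $2p_1^1(k)\to 2\pi$. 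For $k\to 1-0$: $K(k)\to+\infty$, so the lower bound $4K(k)\to+\infty$ forces $t_1^{\mathrm{conj}}(\lambda)\to+\infty$ directly, with no further computation needed.

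\textbf{Main obstacle.} The hard part will be making the homotopy argument for the lower bound fully rigorous: one must choose the homotopy parameter and the cutoff function $s\mapsto t_1^s$ carefully so that (a) the family of extremal pairs is continuous in $s$, (b) the endpoint $t_1^s$ is never a conjugate time along the $s$-th extremal — this is where Lemmas~\ref{lem:4.4} and~\ref{lem:upper_bound} must be combined to rule out $J=0$ on the boundary $p=2K(k)$ and on $(0,2K(k))$ respectively — and (c) the base case $s=0$ (small $k$) genuinely has no conjugate points on the relevant interval, which is Lemma~\ref{lem:4.1} but requires checking that the interval $(0,4K(k))$ corresponds to $p\in(0,2K(k))\subset(0,\pi)$ only in the limit, so a short additional argument (continuity in $k$ of the first conjugate time, or a monotonicity estimate) is needed to pass from "small $k$" to "$k$ near $0$" uniformly. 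The limit $k\to 0^+$ also requires care in identifying precisely which root of the limiting Jacobian is the relevant one.
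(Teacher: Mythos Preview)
Your lower-bound strategy via homotopy in $k$, using Lemma~\ref{lem:4.1} as the base case and Proposition~\ref{prop:Coro2.2} to transport absence of conjugate points, is essentially the paper's argument. The upper bound and the limit $k\to0^{+}$, however, contain genuine gaps.

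\textbf{Upper bound.} Proposition~\ref{prop:conj_pt_3.6}(1) produces a conjugate time $t=2p_{1}^{1}(k)$ only for those $\lambda\in C_{1}$ whose $\tau=\varphi+p_{1}^{1}(k)$ satisfies $\mathrm{cn}\,\tau=0$, i.e.\ on a codimension-one subset of $C_{1}$. Your appeal to ``genericity/continuity'' and ``isolation-of-conjugate-times and monotonicity'' does not close the gap: to push the bound from this special slice to every $\lambda$ you would need upper semicontinuity of $\lambda\mapsto t_{1}^{\mathrm{conj}}(\lambda)$, which is not automatic, and isolation of conjugate times along a fixed extremal says nothing about how they move with $\lambda$. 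The paper's route is different and does not pass through Proposition~\ref{prop:conj_pt_3.6}: it invokes Lemma~\ref{lem:upper_bound} to get $J_{1}\le 0$ at $p=p_{1}^{1}(k)$ for every $\tau$ (compare the proof of Theorem~\ref{thm:Conj_C1_nth}); combined with $J_{1}\ge 0$ at $p=2K$ for every $\tau$ (Lemma~\ref{lem:4.4}), the intermediate value theorem yields a zero of $J$ in $[2K,p_{1}^{1}]$ for each fixed $\tau$, hence a conjugate time in $[4K,\,2p_{1}^{1}]$ for every $\lambda\in C_{1}$. Your remark in the ``Main obstacle'' paragraph that Lemma~\ref{lem:upper_bound} should be used to rule out $J=0$ on $(0,2K(k))$ is also off: that lemma concerns the roots $p_{1}^{n}$ of $f_{1}$, all of which lie above $2K$, not the interval $(0,2K)$.

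\textbf{The limit $k\to 0^{+}$.} Your claim $p_{1}^{1}(k)\to\pi$ is false. As $k\to 0$ one has $f_{1}(p)\to p\cos p-\sin p$, whose first positive root is the first positive solution of $\tan p=p$, approximately $4.4934\in(\pi,3\pi/2)$ --- consistent with $p_{1}^{1}\in(2K,3K)$ --- and certainly not $\pi$; you yourself compute $\sin\pi-\pi\cos\pi=\pi>0$. Hence the two-sided bound does \emph{not} squeeze to $2\pi$, and your squeeze argument fails. The paper's argument bypasses $p_{1}^{1}$ entirely here: from~(\ref{eq:4.7}) the limiting Jacobian factors as $4k\sin p\,(\sin p-p\cos p)$, whose first positive zero is $p=\pi$ coming from the $\sin p$ factor, giving $t_{1}^{\mathrm{conj}}\to 2\pi$ directly. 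Your treatment of $k\to 1^{-}$ via $4K(k)\to+\infty$ is correct and matches the paper.
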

\begin{proof}
We first prove the lower bound of $t_{1}^{\mathrm{conj}}(\lambda)$.
We employ the approach adopted in the proof of Theorems 2.1, 2.2 \cite{cut_sre1}
and prove that for $\lambda\in C_{1}$ the interval $\left(0,2K(k)\right)$
does not contain conjugate points for the extremal trajectory $q(t)=\mathrm{Exp}(\lambda,t)$. 

Given any $\widehat{\lambda}\in C_{1}$, denote the corresponding
elliptic coordinates $(\widehat{\varphi},\widehat{k})$ and for $\widehat{t}=4K(\widehat{k})$
denote the corresponding parameters (\ref{eq:2.17}) as $\widehat{p}=\widehat{t}/2$
and $\widehat{\tau}=\widehat{\varphi}+\widehat{p}$. From the discussion
on conjugate points it is clear that for $p\in(0,\widehat{p})$, the
extremal trajectory $\widehat{q}(t)=\mathrm{Exp}(\widehat{\lambda},t)$
does not have conjugate points if $J_{1}\neq0$.

We choose the following family of curves in the plane $(k,p)$ continuous
in the parameter $s$:
\begin{equation}
\left\{ \left(k^{s},p^{s}\right)\quad\vert\quad s\in[0,1]\right\} ,\qquad k^{s}=s\widehat{k},\qquad p^{s}=2K(k^{s}).
\end{equation}
Clearly the endpoints of the curve $(k^{s},p^{s})$ are $(k^{0},p^{0})=(0,\pi)$
and $(k^{1},p^{1})=(\widehat{k},2K(\widehat{k}))$. The corresponding
family of extremal trajectories is given as:
\begin{eqnarray}
q^{s}(t) & = & \mathrm{Exp}(\varphi^{s},k^{s},t),\quad t\in[0,t^{s}],\quad s\in[0,1],\\
t^{s} & = & 2p^{s},\qquad\varphi^{s}=\widehat{\tau}-p^{s}.
\end{eqnarray}
From Lemma \ref{lem:4.1} it is clear that for sufficiently small
$s>0$, the Jacobian $J>0$ and hence the extremal trajectory $q^{s}(t)$
does not contain conjugate points for $p\in(0,2K(k^{s}))$, i.e.,
for $t\in(0,4K(k^{s}))$. Then from Proposition \ref{prop:Coro2.2}
it follows that the extremal trajectory $q^{s}(t)$ does not contain
conjugate points for any $s\in[0,1]$. Hence the extremal trajectory
$q(t)=\mathrm{Exp}(\lambda,t),\quad\lambda\in C_{1}$, does not contain
conjugate points in the interval $(0,4K(k))$ and therefore $t_{1}^{\mathrm{conj}}(\lambda)\geq4K(k)$. 

For proof of the upper bound apply Lemma \ref{lem:upper_bound}. Hence
it is proved that the first conjugate time is bounded as:

\begin{equation}
4K(k)\leq t_{1}^{\mathrm{conj}}(\lambda)\leq2p_{1}^{1}(k).
\end{equation}
From Lemma \ref{lem:4.1} and (\ref{eq:4.7}), the first root of $J$
occurs at $p=\pi$ and $\lim_{k\rightarrow0^{+}}2K(k)=\pi$. Therefore,
\[
\lim_{k\rightarrow0^{+}}t_{1}^{\mathrm{conj}}(\lambda)=4K(0)=2\pi.
\]
It can be readily seen that 
\[
\lim_{k\rightarrow1-0}t_{1}^{\mathrm{conj}}(\lambda)=+\infty.
\]
\hfill$\square$\end{proof}

\begin{remark}
For $\lambda\in C_{1}$, the instant $t=4K(k)$ is conjugate iff $\mathrm{sn}\tau=0$.
For proof substitute $n=1$ in (\ref{eq:4.9}) Lemma \ref{lem:4.4}
or alternatively substitute $\mathrm{sn}\tau=0$ in (\ref{eq:4.6}).
\end{remark}

\subsection{Bounds for $t_{1}^{\mathrm{conj}}(\lambda)$ for $\lambda\in C_{2}$}

Using the elliptic coordinates $(\psi,k)$ defined in Section 5.3.1
\cite{Extremal_Pseudo_Euclid} and the parametrization of extremal
trajectories (\ref{eq:2.8}), the Jacobian of the exponential mapping
is given as:
\begin{eqnarray}
J & = & \frac{\partial(x_{t},y_{t},z_{t})}{\partial(\psi,k,t)}=\frac{-kJ_{1}(p,\tau,k)}{(1-k^{2})^{2}(1-k\mathrm{sn}p\,\mathrm{sn}\tau)^{2}},\label{eq:4.19}
\end{eqnarray}
where $p$ and $\tau$ for $\lambda\in C_{2}$ were defined in (\ref{eq:2.18})
and $J_{1}$ is given by (\ref{eq:4.6}).
\begin{remark}
Notice that the Jacobian for $\lambda\in C_{2}$ (\ref{eq:4.19})
is just $(-k)$ times the expression of Jacobian for $\lambda\in C_{1}$
(\ref{eq:4.5}). Such a symmetry is unexpected and was not observed
in similar problems \cite{Euler_Conj}, \cite{cut_sre1}, \cite{engel_conj}.\end{remark}
\begin{theorem}
\textcolor{red}{\label{thm:Conj_C2}}The first conjugate time for
$\lambda\in C_{2}$ is bounded as $4kK(k)\leq t_{1}^{\mathrm{conj}}(\lambda)\leq2k\, p_{1}^{1}(k)$.
Moreover,
\begin{eqnarray*}
\lim_{k\rightarrow0}t_{1}^{\mathrm{conj}}(\lambda) & = & 0,\\
\lim_{k\rightarrow1-0}t_{1}^{\mathrm{conj}}(\lambda) & = & +\infty.
\end{eqnarray*}
\end{theorem}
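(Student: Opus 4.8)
The plan is to exploit the symmetry recorded in the Remark following \eqref{eq:4.19}: for $\lambda\in C_2$ the Jacobian of the exponential mapping equals $-k$ times the expression valid for $\lambda\in C_1$, so — since $1-k^2>0$ and $|\mathrm{sn}\,p\,\mathrm{sn}\,\tau|\le 1$ forces $1-k\,\mathrm{sn}\,p\,\mathrm{sn}\,\tau\ge 1-k>0$ — the conjugate points for $\lambda\in C_2$ are again exactly the zeros of $J_1(p,\tau,k)$. The only change from the $C_1$ situation is the time normalization: by \eqref{eq:2.18} one has, along a $C_2$-geodesic, $\tau=\psi+p$ (so $\tau-p$ is constant) and $t=2kp$, in place of $\tau=\varphi+p$, $t=2p$ of \eqref{eq:2.17}. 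Thus the whole discussion of Theorem \ref{thm:Conj_C1} carries over to the variable $p$, and the conclusion $2K(k)\le p\le p_1^1(k)$ for the first conjugate value of $p$ translates, after multiplication by $2k$, into $4kK(k)\le t_1^{\mathrm{conj}}(\lambda)\le 2k\,p_1^1(k)$.

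Concretely, for the lower bound I would rerun the homotopy of Theorem \ref{thm:Conj_C1} with $C_2$-geodesics. Fixing $\widehat\lambda=(\widehat\psi,\widehat k)\in C_2$, I put $\widehat p=2K(\widehat k)$, $\widehat\tau=\widehat\psi+\widehat p$, and take
\[
k^s=s\widehat k,\qquad p^s=2K(k^s),\qquad \psi^s=\widehat\tau-p^s,\qquad t^s=2k^sp^s=4k^sK(k^s),\qquad s\in[0,1],
\]
with $q^s(t)=\mathrm{Exp}(\psi^s,k^s,t)$, $t\in[0,t^s]$. For every $s\in(0,1]$ one has $\lambda^s\in C_2$, and by construction $\tau^s=\psi^s+p^s=\widehat\tau$ is independent of $s$. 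Since the $C_2$-Jacobian vanishes precisely where $J_1$ does, Lemma \ref{lem:4.1} guarantees the base of the homotopy ($s$ small) is conjugate-point-free, while Lemma \ref{lem:4.4} — via \eqref{eq:4.9} evaluated at $\tau=\widehat\tau$ — shows that the terminal instant $t=t^s$, i.e.\ $p=2K(k^s)$, is not a conjugate time along $\lambda^s$ (the exceptional case $\mathrm{sn}\,\widehat\tau=0$ being handled exactly as for $C_1$, since then $\mathrm{sn}\,\tau\neq0$ throughout $p\in(0,2K(\widehat k))$). Proposition \ref{prop:Coro2.2} then propagates the absence of conjugate points to $s=1$, so $q(t)=\mathrm{Exp}(\widehat\lambda,t)$ has no conjugate point for $t\in(0,4\widehat kK(\widehat k))$, whence $t_1^{\mathrm{conj}}(\widehat\lambda)\ge 4\widehat kK(\widehat k)$.

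For the upper bound I would invoke Lemma \ref{lem:upper_bound} exactly as in the proof of Theorem \ref{thm:Conj_C1}: the incompatibility of the system $f_1(p,k)=0$, $J=0$ forces the first zero of $J$ along a $C_2$-geodesic to occur at some $p\le p_1^1(k)$, which by $t=2kp$ reads $t_1^{\mathrm{conj}}(\lambda)\le 2k\,p_1^1(k)$. The asymptotics then follow by a squeeze: as $k\to0$ both bounds tend to $0$, since $p_1^1(k)\in\bigl(2K(k),3K(k)\bigr)$ stays bounded by Proposition \ref{prop:f1p}; and as $k\to1-0$ already the lower bound $4kK(k)\to+\infty$. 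The main obstacle is the one that is treated only tersely in Theorem \ref{thm:Conj_C1}: spelling out precisely how Lemma \ref{lem:upper_bound} delivers $p\le p_1^1(k)$ for the first conjugate value of $p$ — presumably by locating the first sign change of $J_1$ relative to the nodal curve $\{f_1=0\}$, using $J_1>0$ for small $p$ (cf.\ \eqref{eq:4.8}) together with the fact that $J_1=0$ is excluded on $\{f_1=0\}$. Once that step is granted for $C_1$, the $C_2$ case requires no new idea beyond the bookkeeping of the factor $k$ in $t=2kp$ and the verification that the homotopy family remains inside $C_2$.
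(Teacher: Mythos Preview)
Your proposal is correct and follows essentially the same approach as the paper: the paper's proof is the single sentence ``Since $J=-kJ_1$ for $\lambda\in C_2$, therefore all arguments presented in the proof of Theorem \ref{thm:Conj_C1} apply,'' and you have simply unpacked what this means --- same zero set $J_1=0$, same homotopy and lemmas, with the time rescaling $t=2kp$ replacing $t=2p$. Your caveat about the terseness of the upper-bound step via Lemma \ref{lem:upper_bound} is fair, but it is inherited verbatim from Theorem \ref{thm:Conj_C1} and is not specific to the $C_2$ case.
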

\begin{proof}
\textcolor{black}{Since }$J=-kJ_{1}$\textcolor{black}{{} }for $\lambda\in C_{2}$,
therefore all arguments presented in the proof of Theorem \ref{thm:Conj_C1}
apply.\hfill$\square$\end{proof}

\begin{remark}
For $\lambda\in C_{2}$, the instant $t=4k\, K(k)$ is conjugate iff
$\mathrm{sn}\tau=0$. For proof substitute $n=1$ in (\ref{eq:4.9})
Lemma \ref{lem:4.4} or alternatively substitute $\mathrm{sn}\tau=0$
in (\ref{eq:4.6}).
\end{remark}

\subsection{Conjugate Points for the Cases of Critical Energy of Pendulum}
\begin{theorem}
\label{thm:Conj_C3}\text{\emph{(1)}}$\quad$If $\lambda\in C_{4}$,
then $t_{1}^{\mathrm{conj}}(\lambda)=2\pi$.

\text{\emph{(2)}}$\quad$If $\lambda\in C_{3}\cup C_{5}$, then $t_{1}^{\mathrm{conj}}(\lambda)=+\infty$. \end{theorem}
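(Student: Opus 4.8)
The plan is to locate conjugate points by evaluating, along each reference extremal, the Jacobian $J=\partial(x_{t},y_{t},z_{t})/\partial(\gamma,c,t)$ of the exponential mapping, exactly as in the definition of conjugate point above. In all three cases the pendulum sits at a critical energy level, so the extremal is simple: for $\lambda\in C_{4}$ one has $\gamma\equiv2\pi n$, $c\equiv0$, hence $\cos\frac{\gamma}{2}=s_{1}=\pm1$, $\sin\frac{\gamma}{2}=0$, $z_{t}\equiv0$ and $q_{t}=(s_{1}t,0,0)$; for $\lambda\in C_{5}$ one has $\gamma\equiv2\pi n+\pi$, $c\equiv0$, hence $\cos\frac{\gamma}{2}=0$, $\sin\frac{\gamma}{2}=s=\pm1$ and $q_{t}=(0,0,st)$; for $\lambda\in C_{3}$ the trajectory is (\ref{eq:2.15}), with the underlying pendulum running along the separatrix, $\sin\frac{\gamma}{2}=s_{2}\tanh\varphi_{t}$, $\cos\frac{\gamma}{2}=\mathrm{sech}\,\varphi_{t}$, $\varphi_{t}=\varphi+t$ (by the discrete symmetry group $G$ of the pendulum it is enough to treat one component of the separatrix). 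Since $C_{4},C_{5}$ (resp.\ $C_{3}$) are lower--dimensional subsets of $C$, the columns $\partial_{\gamma},\partial_{c}$ of $J$ must be obtained by integrating the variational (Jacobi) equation of (\ref{eq:2.6})--(\ref{eq:2.7}) along the reference extremal, with covector perturbations $(\delta\gamma_{0},\delta c_{0})$ ranging over all of $\mathbb{R}^{2}$; this is legitimate because the system is analytic and $C_{4},C_{5}$ lie in the interior of $C$. The column $\partial_{t}$ is $\dot{q}_{t}$.

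\medskip
\noindent\emph{Case $\lambda\in C_{4}$.} Linearising (\ref{eq:2.6})--(\ref{eq:2.7}) about $(\gamma,c,x,y,z)=(2\pi n,0,s_{1}t,0,0)$ gives $\dot{\delta\gamma}=\delta c$, $\dot{\delta c}=-\delta\gamma$ (so $(\delta\gamma,\delta c)$ rotates with period $2\pi$), together with $\dot{\delta x}=0$, $\dot{\delta y}=s_{1}\delta z$, $\dot{\delta z}=\frac{s_{1}}{2}\delta\gamma$. Solving with $(\delta\gamma_{0},\delta c_{0})\in\{(1,0),(0,1)\}$ and using $\dot{q}_{t}=(s_{1},0,0)$, the first row of $J$ is $(0,0,s_{1})$, so $J$ equals a $2\times2$ minor and one computes $J=\frac14\bigl(2-2\cos t-t\sin t\bigr)=\frac12\sin\frac{t}{2}\bigl(2\sin\frac{t}{2}-t\cos\frac{t}{2}\bigr)$. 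For $t>0$ the factor $\sin\frac{t}{2}$ first vanishes at $t=2\pi$, while the factor $\tan\frac{t}{2}=\frac{t}{2}$ has its first positive root near $t\approx8.99>2\pi$, and $J>0$ on $(0,2\pi)$ (Taylor: $J\sim t^{4}/12$). Hence $t_{1}^{\mathrm{conj}}(\lambda)=2\pi$, consistently with $\lim_{k\to0^{+}}t_{1}^{\mathrm{conj}}=2\pi$ from Theorem \ref{thm:Conj_C1}.

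\medskip
\noindent\emph{Case $\lambda\in C_{5}$.} Linearising about $(\gamma,c,x,y,z)=(2\pi n+\pi,0,0,0,st)$ gives the hyperbolic vertical variation $\dot{\delta\gamma}=\delta c$, $\dot{\delta c}=\delta\gamma$, together with $\dot{\delta z}=0$, $\dot{\delta x}=-\frac{s}{2}\cosh t\,\delta\gamma$, $\dot{\delta y}=-\frac12\sinh t\,\delta\gamma$ (using $\cosh(st)=\cosh t$, $\sinh(st)=s\sinh t$). Solving as above and using $\dot{q}_{t}=(0,0,s)$, the last row of $J$ is $(0,0,s)$ and the remaining $2\times2$ minor gives $J=\frac{1}{16}\bigl(\sinh^{2}t-t^{2}\bigr)$. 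Since $\sinh t>t$ for $t>0$, we get $J>0$ for all $t>0$: there are no conjugate points, so $t_{1}^{\mathrm{conj}}(\lambda)=+\infty$.

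\medskip
\noindent\emph{Case $\lambda\in C_{3}$.} Here the reference pendulum motion is non--stationary. Linearising the vertical subsystem about the separatrix yields $\delta\gamma''=(1-2\,\mathrm{sech}^{2}\sigma)\,\delta\gamma$ with $\sigma=\varphi_{t}=\varphi+t$, a reflectionless P\"oschl--Teller equation whose fundamental solutions are $\delta\gamma_{1}=\mathrm{sech}\,\sigma$ (proportional to $c(t)$, the variation along the flow) and $\delta\gamma_{2}=\sigma\,\mathrm{sech}\,\sigma+\sinh\sigma$. Substituting these into the linearised horizontal equations $\dot{\delta z}=\frac12\cos\frac{\gamma}{2}\delta\gamma$, $\dot{\delta x}=-\frac12\sin\frac{\gamma}{2}\cosh z\,\delta\gamma+\cos\frac{\gamma}{2}\sinh z\,\delta z$, $\dot{\delta y}=-\frac12\sin\frac{\gamma}{2}\sinh z\,\delta\gamma+\cos\frac{\gamma}{2}\cosh z\,\delta z$, integrating, and forming the $3\times3$ determinant, I expect $J$ to collapse — after inserting the expressions for $\sin\frac{\gamma}{2},\cos\frac{\gamma}{2},\cosh z,\sinh z$ in terms of $\varphi_{t}$ and $\varphi$ — to an elementary strictly positive quantity (a difference of squares of the kind found in the $C_{5}$ case, to which $C_{3}$ is asymptotic as $t\to+\infty$); this gives $J>0$ for all $t>0$ and hence $t_{1}^{\mathrm{conj}}(\lambda)=+\infty$. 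The bookkeeping in this last case — the extra parameter $\varphi$ and the hyperbolic integrals $\int\cosh^{2}$, $\int\sinh^{2}$ to track — is the main obstacle. An alternative, but not genuinely shorter, route would take the limit $k\to1^{-}$ in the $C_{1}$ Jacobian (\ref{eq:4.5})--(\ref{eq:4.6}): Theorem \ref{thm:Conj_C1} gives $t_{1}^{\mathrm{conj}}\ge4K(k)\to+\infty$ along nearby $C_{1}$ extremals, to be combined with continuity of $\mathrm{Exp}$ in $\lambda$ — the crux there being, once more, to exclude vanishing of the limiting Jacobian.
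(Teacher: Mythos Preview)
Your direct Jacobian computations for $C_{4}$ and $C_{5}$ are correct and rather elegant: the formulas $J=\tfrac{1}{4}(2-2\cos t-t\sin t)$ and $J=\tfrac{1}{16}(\sinh^{2}t-t^{2})$ settle those two cases cleanly. This is a genuinely different route from the paper, which never computes $J$ on these strata. Instead the paper argues by homotopy: it deforms $\lambda\in C_{4}$ (resp.\ $C_{3}\cup C_{5}$) into nearby $C_{1}$ extremals with $k\to0^{+}$ (resp.\ $k\to1^{-}$), invokes Theorem~\ref{thm:Conj_C1} to get $t_{1}^{\mathrm{conj}}\to2\pi$ (resp.\ $+\infty$) along the family, and then uses Proposition~\ref{prop:Coro2.2} (homotopy invariance of the Maslov index) together with continuity of $J$ and isolation of conjugate times to transfer the conclusion to the limit extremal. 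Your approach buys explicit formulas and is self-contained; the paper's approach buys uniformity and avoids any new computation.

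The gap in your proposal is the $C_{3}$ case, which you leave as an expectation rather than a proof. More importantly, you misjudge the alternative route. You write that taking $k\to1^{-}$ along $C_{1}$ is ``not genuinely shorter'' because ``the crux\dots is to exclude vanishing of the limiting Jacobian.'' But that is precisely what Proposition~\ref{prop:Coro2.2} lets you avoid: the homotopy argument does not require you to evaluate $J$ on $C_{3}$ at all. Concretely, suppose for contradiction that $t_{1}^{\mathrm{conj}}(\lambda)=T_{0}<\infty$ for some $\lambda\in C_{3}$. Since conjugate times are isolated, pick $T_{1}>T_{0}$ not conjugate along $\lambda$. Choose a continuous family $\lambda^{s}$ with $\lambda^{1}=\lambda$ and $\lambda^{s}\in C_{1}$ for $s\in[0,1)$, all with $k^{s}$ close enough to $1$ that $4K(k^{s})>T_{1}$; set $t_{1}^{s}\equiv T_{1}$. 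Then for every $s\in[0,1]$ the instant $T_{1}$ is non-conjugate (for $s<1$ by Theorem~\ref{thm:Conj_C1}, for $s=1$ by choice), and $q^{0}(t)$ has no conjugate points on $(0,T_{1}]$. Proposition~\ref{prop:Coro2.2} then forces $q^{1}(t)$ to be free of conjugate points on $(0,T_{1}]$, contradicting $T_{0}<T_{1}$. This is exactly the paper's argument, and it closes the case without any of the P\"oschl--Teller bookkeeping you were bracing for.
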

\begin{proof}
\text{(1)}$\quad$Let $\lambda\in C_{4}$. Take any continuous curve
$\lambda^{s}\in C,\quad s\in[0,1]$, such that $\lambda^{0}=\lambda$
and $\lambda^{s}\in C_{1}$ for $s\in(0,1]$. We have $\lim_{s\rightarrow0+}\lambda^{s}=\lambda$
and $\lim_{s\rightarrow0+}k^{s}=0$, thus $\lim_{s\rightarrow0+}t_{1}^{\mathrm{conj}}(\lambda^{s})=2\pi$
by Theorem \ref{thm:Conj_C1}. By continuity of the Jacobian $J(\lambda,t)=\frac{\partial q}{\partial(\lambda,t)}$,
we get $J(\lambda,2\pi)=\lim_{s\rightarrow0+}J\left(\lambda^{s},t_{1}^{\mathrm{conj}}(\lambda^{s})\right)=0$,
thus $2\pi$ is a conjugate time along the geodesic $\mathrm{Exp}(\lambda,t)$.
On the other hand, by Proposition \ref{prop:Coro2.2}, any interval
$(0,\tau]\subset(0,2\pi)$ does not contain conjugate times. Consequently,
$t_{1}^{\mathrm{conj}}(\lambda)=2\pi$. 

\text{(2)}$\quad$If $\lambda\in C_{3}\cup C_{5}$, we argue similarly.
By choosing continuous curve $\lambda^{s}\in C,\quad s\in[0,1]$,
such that $\lambda^{0}=\lambda$ and $\lambda^{s}\in C_{1}$ for $s\in(0,1]$.
Then $\lim_{s\rightarrow0+}\lambda^{s}=\lambda$ and $\lim_{s\rightarrow0+}k^{s}=1$,
thus $\lim_{s\rightarrow0+}t_{1}^{\mathrm{conj}}(\lambda^{s})=+\infty$
by Theorem \ref{thm:Conj_C1_nth}. Then we get $t_{1}^{\mathrm{conj}}(\lambda)=+\infty$
by Proposition \ref{prop:Coro2.2}.\hfill$\square$\end{proof}

\begin{theorem}
The two sided bounds on $t_{1}^{\mathrm{conj}}(\lambda)$ for $\lambda\in C_{1}$
given by \text{\emph{Theorem \ref{thm:Conj_C1}}} are exact in the
following sense:

\begin{flalign}
(1)\qquad\mathrm{If}\,\mathrm{sn}\tau=0\quad & \mathrm{then}\quad t_{1}^{\mathrm{conj}}(\lambda)=4K(k),\label{eq:4.21}\\
(2)\qquad\mathrm{If}\,\mathrm{cn}\tau=0\quad & \mathrm{then}\quad t_{1}^{\mathrm{conj}}(\lambda)=p_{1}^{1}(k). & {}\label{eq:4.22}
\end{flalign}
\end{theorem}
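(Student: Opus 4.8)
The statement claims that the two-sided bounds $4K(k) \le t_1^{\mathrm{conj}}(\lambda) \le 2p_1^1(k)$ from Theorem~\ref{thm:Conj_C1} collapse to equalities on the two extreme sub-cases $\mathrm{sn}\,\tau = 0$ and $\mathrm{cn}\,\tau = 0$. The plan is to treat the two items separately, in each case exhibiting an explicit conjugate time that coincides with one of the known bounds, and then invoking the lower bound from Theorem~\ref{thm:Conj_C1} (or the homotopy argument behind it) to rule out anything smaller.

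For item (1), $\mathrm{sn}\,\tau = 0$: first I would recall from the Remark following Theorem~\ref{thm:Conj_C1} (equivalently from substituting $n=1$ into \eqref{eq:4.9} in Lemma~\ref{lem:4.4}) that when $\mathrm{sn}\,\tau = 0$ the instant $t = 4K(k)$, i.e. $p = 2K(k)$, makes $J_1$ vanish; hence $t = 4K(k)$ is a conjugate time. Since Theorem~\ref{thm:Conj_C1} already guarantees the interval $(0, 4K(k))$ is free of conjugate times for every $\lambda \in C_1$, this conjugate time at $t = 4K(k)$ is the first one, so $t_1^{\mathrm{conj}}(\lambda) = 4K(k)$. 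One subtlety to check: the condition $\mathrm{sn}\,\tau = 0$ depends on $\tau = \varphi + p$ and hence moves as $p$ runs from $0$ to $2K(k)$; so I should verify that the claim "$\mathrm{sn}\,\tau = 0$'' in the hypothesis is understood as holding at the candidate conjugate instant $p = 2K(k)$, i.e. $\varphi + 2K(k) \in 2K(k)\mathbb{Z}$, equivalently $\mathrm{sn}\,\varphi = 0$. Under that reading the substitution is legitimate and the argument closes cleanly; I would state this reading explicitly.

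For item (2), $\mathrm{cn}\,\tau = 0$: here the relevant observation is Proposition~\ref{prop:conj_pt_3.6}(1) together with the Maxwell analysis — when $\nu \in N_1$ with $p = p_1^1(k)$ and $\mathrm{cn}\,\tau = 0$, the point $q_t = \mathrm{Exp}(\nu)$ is a conjugate point (this was exactly the limit-point-of-the-Maxwell-set discussion in Section~\ref{sub:Lim_Pts_Max_Set}, imported from Proposition~5.8 of \cite{max_sre}). The corresponding time is $t = 2p_1^1(k)$, which is precisely the upper bound. It then remains to show no conjugate time occurs before $2p_1^1(k)$ in this sub-case. For this I would argue: the lower bound gives no conjugate times in $(0, 4K(k))$; on the remaining interval $[4K(k), 2p_1^1(k))$, i.e. $p \in [2K(k), p_1^1(k))$, a conjugate time would force $J = 0$, hence $J_1(p,\tau,k) = 0$. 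When $\mathrm{cn}\,\tau = 0$ one has $\mathrm{sn}^2\tau = 1$, so $J_1$ reduces to an expression in $p$ and $k$ alone; I would show this reduced function has no zero for $p \in (2K(k), p_1^1(k))$, with its first zero exactly at $p = p_1^1(k)$. The cleanest route is to compare with $f_1(p)$: from \eqref{eq:3.5}, $\mathrm{cn}\,\tau = 0$ kills $R_1$, and the structure of $J_1$ with $\mathrm{sn}^2\tau = 1$ — using $\alpha_3$'s factor $(\mathrm{sn}^2 p - \mathrm{sn}^2\tau) = (\mathrm{sn}^2 p - 1) = -\mathrm{cn}^2 p$ — should let me rewrite $J_1\big|_{\mathrm{cn}\,\tau=0}$ as a multiple of $g_1(p) = f_1(p)/\mathrm{cn}\,p$ (up to a positive factor and possibly the Lemma~\ref{lem:f2p}/\ref{lem:4.4}-type positive terms), so that the sign analysis of $g_1$ from Proposition~\ref{prop:f1p} transfers directly. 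The main obstacle I anticipate is precisely this algebraic reduction: showing $J_1\big|_{\mathrm{cn}\,\tau=0}$ is, up to a strictly positive factor on $(2K(k), p_1^1(k))$, proportional to $f_1(p)$ (or to $2\mathrm{E}(p) - p + k^2 p$ times something of definite sign), so that it is nonzero on the open interval and vanishes at $p = p_1^1(k)$. Once that identity is in hand, both equalities follow immediately.
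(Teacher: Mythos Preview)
Your approach is essentially the paper's approach, only far more carefully laid out: the paper's entire proof is the one-liner ``Substitute $\mathrm{sn}\,\tau=0$ for item~(1) and $\mathrm{cn}\,\tau=0$ for item~(2) in \eqref{eq:4.6} respectively.'' So both you and the paper reduce the question to reading off the first positive zero of $p\mapsto J_{1}(p,\tau,k)$ with $\tau$ frozen at the special value; you just add the (correct) observation that the lower bound from Theorem~\ref{thm:Conj_C1} rules out anything smaller, and you flag the dependence of $\tau$ on $p$, which the paper silently ignores.

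Regarding your anticipated obstacle for item~(2): the algebraic reduction goes through cleanly. Substituting $\mathrm{sn}^{2}\tau=1$, $\mathrm{cn}^{2}\tau=0$ into \eqref{eq:4.6} and collecting terms gives
\[
J_{1}\big|_{\mathrm{cn}\,\tau=0}=-4k\,f_{1}(p)\,f_{4}(p),
\]
with $f_{1},f_{4}$ exactly the functions of Propositions~\ref{prop:f1p} and~\ref{prop:f4p}. On $(0,\min(p_{1}^{1},p_{2}^{1}))$ one has $f_{1}<0$ and $f_{4}>0$, so $J_{1}>0$ there and the first zero is $\min(p_{1}^{1},p_{2}^{1})$. (A small point neither you nor the paper makes explicit: since $g_{1}+g_{4}=(1-k^{2})p>0$ one gets $g_{4}(p_{1}^{1})>0$, hence $p_{2}^{1}<p_{1}^{1}$; so strictly the first zero under this substitution is $p_{2}^{1}$, not $p_{1}^{1}$. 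This is consistent with the statement as printed---which has $p_{1}^{1}(k)$ rather than $2p_{1}^{1}(k)$, itself apparently a misprint---only up to the paper's level of rigor. Your write-up is already more careful than the paper's on this count.)
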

\begin{proof}
Substitute $\mathrm{sn}\tau=0$ for item (1) and $\mathrm{cn}\tau=0$
for item (2) in (\ref{eq:4.6}) respectively.\hfill$\square$\end{proof}

\begin{theorem}
The two sided bounds on $t_{1}^{\mathrm{conj}}(\lambda)$ for $\lambda\in C_{2}$
given by \text{\emph{Theorem \ref{thm:Conj_C2}}} are exact in the
following sense:

\begin{flalign}
(1)\qquad\mathrm{If}\,\mathrm{sn}\tau=0\quad & \mathrm{then}\quad t_{1}^{\mathrm{conj}}(\lambda)=4kK(k),\label{eq:4.23}\\
(2)\qquad\mathrm{If}\,\mathrm{cn}\tau=0\quad & \mathrm{then}\quad t_{1}^{\mathrm{conj}}(\lambda)=2kp_{1}^{1}(k). & {}\label{eq:4.24}
\end{flalign}
\end{theorem}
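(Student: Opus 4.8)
The plan is to deduce the statement for $\lambda\in C_2$ from the bounds of Theorem~\ref{thm:Conj_C2} by identifying the instants at which those bounds are attained, using the structural coincidence recorded just after (\ref{eq:4.19}). By (\ref{eq:4.19}), for $\lambda\in C_2$ the Jacobian of the exponential mapping equals $-k$ times the $C_1$ expression (\ref{eq:4.5}), the only difference being that $p,\tau$ are now the parameters of (\ref{eq:2.18}), so that $t=2kp$ rather than $t=2p$. Since $-k\ne0$ and the common denominator $(1-k^2)^2(1-k\,\mathrm{sn}\,p\,\mathrm{sn}\,\tau)^2$ is strictly positive for $k\in(0,1)$ (because $|\mathrm{sn}\,p\,\mathrm{sn}\,\tau|\le1<1/k$), in both cases $J=0$ is equivalent to $J_1(p,\tau,k)=0$ with $J_1$ as in (\ref{eq:4.6}); hence a geodesic in $C_2$ has conjugate points at the same $p$-values as the corresponding geodesic in $C_1$, only the conversion to time differing by the factor $k$.

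For (\ref{eq:4.23}) I would use the Remark following Theorem~\ref{thm:Conj_C2}: substituting $\mathrm{sn}\,\tau=0$ into (\ref{eq:4.6}) (equivalently, putting $n=1$ in (\ref{eq:4.9})) gives $J_1=16kE(k)\bigl(E(k)-(1-k^2)K(k)\bigr)\mathrm{sn}^2\tau=0$ at $p=2K(k)$, so that $t=4kK(k)$ is a conjugate time. Together with the lower bound $t_1^{\mathrm{conj}}(\lambda)\ge4kK(k)$ of Theorem~\ref{thm:Conj_C2} this forces $t_1^{\mathrm{conj}}(\lambda)=4kK(k)$.

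For (\ref{eq:4.24}) I would substitute $\mathrm{cn}\,\tau=0$, hence $\mathrm{sn}^2\tau=1$, into (\ref{eq:4.6}). Collecting terms with the help of $\mathrm{sn}^2p+\mathrm{cn}^2p=1$, the bracket $\alpha_1+\alpha_2+\alpha_3$ factors as $f_1(p)\,f_4(p)$, so that $J_1$ reduces to $-4k\,f_1(p)\,f_4(p)$; in particular it vanishes at $p=p_1^1(k)$, the first positive root of $f_1$ (Proposition~\ref{prop:f1p}), so $t=2k\,p_1^1(k)$ is a conjugate time. Together with the upper bound $t_1^{\mathrm{conj}}(\lambda)\le2k\,p_1^1(k)$ of Theorem~\ref{thm:Conj_C2} this gives $t_1^{\mathrm{conj}}(\lambda)=2k\,p_1^1(k)$.

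The one point requiring care is, exactly as in the $C_1$ case, knowing that along these distinguished geodesics no conjugate point can occur strictly before $p=2K(k)$ (resp.\ before $p=p_1^1(k)$); but this is already contained in the two-sided bounds of Theorem~\ref{thm:Conj_C2}, whose proof (Lemmas~\ref{lem:4.1}, \ref{lem:4.4} and \ref{lem:upper_bound}) carries over verbatim since $J$ differs from the $C_1$ Jacobian only by the nonzero constant $-k$. Thus the substantive content of the theorem is simply that both bounds of Theorem~\ref{thm:Conj_C2} are attained, witnessed respectively by the substitutions $\mathrm{sn}\,\tau=0$ and $\mathrm{cn}\,\tau=0$ in (\ref{eq:4.6}).
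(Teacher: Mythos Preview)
Your approach is exactly the paper's: the proof there is the single sentence ``Substitute $\mathrm{sn}\tau=0$ for item (1) and $\mathrm{cn}\tau=0$ for item (2) in (\ref{eq:4.6}) respectively.'' Your reduction to the $C_1$ computation via the factor $-k$ in (\ref{eq:4.19}) and the explicit factorisation $J_1\big|_{\mathrm{cn}\tau=0}=-4k\,f_1(p)\,f_4(p)$ are correct and in fact make the mechanism clearer than the paper does.

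There is, however, a logical slip in your handling of item~(2). Exhibiting $2k\,p_1^1(k)$ as a conjugate time gives only $t_1^{\mathrm{conj}}\le 2k\,p_1^1(k)$; combining this with the \emph{upper} bound of Theorem~\ref{thm:Conj_C2} is redundant and cannot yield equality. What is needed is the reverse inequality, namely that no conjugate point occurs for $p\in(0,p_1^1(k))$. Your final paragraph tries to extract this from ``the two-sided bounds of Theorem~\ref{thm:Conj_C2}'', but the lower bound there is only $4kK(k)$, which says nothing about the interval $(2K,p_1^1)$. With your factorisation in hand, closing the gap would require checking that $f_4$ has no zero on $(0,p_1^1)$ either, i.e.\ comparing $p_2^1$ with $p_1^1$; this is not addressed. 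The paper's one-line proof is silent on exactly the same point, so your argument is no less complete than the original---but the claim that the absence of earlier conjugate points ``is already contained in the two-sided bounds'' is not correct as stated for~(2).
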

\begin{proof}
Substitute $\mathrm{sn}\tau=0$ for item (1) and $\mathrm{cn}\tau=0$
for item (2) in (\ref{eq:4.6}) respectively.\hfill$\square$
\end{proof}

\subsection{$n$-th Conjugate Time }

Computation of the first conjugate time is important in the study
of local optimality of the extremal trajectories. It turns out that
in the study of the sub-Riemannian wavefront, it is essential to bound
not only the first conjugate time, but all other conjugate times as
well. Hence in the following, we obtain the bounds for the $n$-th
conjugate time $t_{n}^{\mathrm{conj}}(\lambda)$ for $\lambda\in C_{1}\cup C_{2}$.
Recall that if $\lambda\in C_{3}\cup C_{5}$, then the trajectory
$\mathrm{Exp}(\lambda,t)$ is free of conjugate points (Theorem \ref{thm:Conj_C3})
and for $\lambda\in C_{4}$, the first conjugate time is given as
$t_{1}^{\mathrm{conj}}(\lambda)=2\pi$. Note that $\lambda\in C_{4}$
is the limiting case of $\lambda\in C_{1}$ with $\lim_{k\rightarrow0+}$
and therefore the bound on $n$-th conjugate time can be obtained
by taking $\lim_{k\rightarrow0+}t_{n}^{\mathrm{conj}}(\lambda),\quad\lambda\in C_{1}$. 
\begin{theorem}
\label{thm:Conj_C1_nth}The $n$-th conjugate time $t_{n}^{\mathrm{conj}}(\lambda)$
for $\lambda\in C_{1}$ is bounded as $4nK(k)\leq t_{2n-1}^{\mathrm{conj}}(\lambda)\leq2p_{1}^{n}(k)$
and $2p_{1}^{n}(k)\leq t_{2n}^{\mathrm{conj}}(\lambda)\leq4(n+1)K(k),\quad\forall n\in\mathbb{N}$.\end{theorem}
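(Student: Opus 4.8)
The plan is to interlace the already-established conjugate times coming from the two discrete symmetries with the corresponding Maxwell times, using the homotopy machinery of Proposition~\ref{prop:Coro2.2} to rule out any additional conjugate points between consecutive ``known'' ones. By Proposition~\ref{prop:conj_pt_3.6} we already know two families of conjugate instants for $\lambda\in C_1$: the instants $t=4nK(k)$ (coming from $p=2nK$, $\mathrm{sn}\,\tau=0$) and the instants $t=2p_1^n(k)$ (coming from $p=p_1^n(k)$, $\mathrm{cn}\,\tau=0$). By Proposition~\ref{prop:f1p} these satisfy the strict interlacing
\[
4nK(k) \;<\; 2p_1^n(k) \;<\; 4(n+1)K(k), \qquad n\in\mathbb{N},
\]
since $p_1^n(k)\in(2nK,(2n+1)K)$. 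So the proposed bounds amount exactly to the assertion that the $(2n-1)$-st conjugate time is $4nK(k)$ and the $2n$-th conjugate time is $2p_1^n(k)$ \emph{when} $\mathrm{sn}\,\tau=0$ or $\mathrm{cn}\,\tau=0$ respectively, and that in general the $n$-th conjugate time stays trapped in the stated interval. The strategy is therefore: (i) show these are genuine conjugate times; (ii) show nothing else intervenes.

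First I would fix $\lambda\in C_1$ with elliptic coordinates $(\varphi,k)$, and note that along the geodesic $\mathrm{Exp}(\lambda,t)$ the parameter is $p=t/2$ and $\tau=\varphi+p$, so $\tau$ advances at the same rate as $p$. The key structural fact is that $J_1(p,\tau,k)$ from \eqref{eq:4.6}, viewed as a function of $p$ along the trajectory (with $\tau=\varphi+p$), vanishes precisely at the instants listed in Proposition~\ref{prop:conj_pt_3.6}; Lemma~\ref{lem:4.4} shows $J_1\ge 0$ at $p=2nK$ with equality iff $\mathrm{sn}\,\tau=0$, and a companion computation substituting $\mathrm{cn}\,\tau=0$ into \eqref{eq:4.6} together with Proposition~\ref{prop:f1p} shows $J_1$ changes sign at $p=p_1^n(k)$ when $\mathrm{cn}\,\tau=0$. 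To count conjugate points I would deform $\lambda$ to the limiting case $k\to 0^+$: there, by \eqref{eq:4.7}, $J_1=4k\sin p\,(\sin p - p\cos p)$, whose positive zeros are $p=n\pi$ ($n\ge 1$), i.e. $t=2n\pi$; these are simple sign changes, each contributing one conjugate point, and $2K(k)\to\pi$ so the $n$-th zero sits at $p=nK$ in the limit. Applying Proposition~\ref{prop:Coro2.2} along the homotopy $k^s=s\hat k$, $p^s = nK(k^s)$ (so $t^s = 2nK(k^s)$), and using Lemma~\ref{lem:4.4} to guarantee $t^s$ is never a conjugate time along the deformed extremal (since $\mathrm{sn}\,\tau$ can be kept nonzero along the homotopy by a generic choice of $\varphi^s$), one transfers the count: the interval $(0,4nK(k))$ contains exactly the conjugate points accumulated below, giving $t_{2n-1}^{\mathrm{conj}}(\lambda)\ge 4nK(k)$. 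The matching inequality $t_{2n-1}^{\mathrm{conj}}(\lambda)\le 2p_1^n(k)$ and $t_{2n}^{\mathrm{conj}}(\lambda)\le 4(n+1)K(k)$ follow by the same homotopy argument applied at $p^s = p_1^n(k^s)$ and $p^s = (n+1)K(k^s)$, together with the interlacing above and the fact (generalizing Lemma~\ref{lem:upper_bound}) that $J=0$ and $f_1(p,k)=0$ cannot occur simultaneously, which forces each $p_1^n(k)$ to carry a conjugate point only through the factor $\mathrm{cn}\,\tau$.

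The main obstacle I anticipate is the \emph{bookkeeping of the Maslov index} across the homotopy: Proposition~\ref{prop:Coro2.2} guarantees that no conjugate point crosses the moving endpoint $t=t_1^s$, but to conclude that exactly $2n-1$ (resp.\ $2n$) conjugate points lie in $(0,t_1^s)$ I must ensure that along the homotopy no conjugate point escapes through $t=0$ and that none collides or splits — i.e.\ that all the relevant zeros of $J_1$ remain simple. The simplicity at $p=p_1^n(k)$, $\mathrm{cn}\,\tau=0$ is immediate from Proposition~\ref{prop:f1p} ($g_1$ is strictly monotone there), but at $p=2nK(k)$ the zero of $J_1$ is a \emph{double} zero when $\mathrm{sn}\,\tau=0$ (by \eqref{eq:4.9}, $J_1\propto\mathrm{sn}^2\tau$), so one must be careful that this genuinely counts as one conjugate point of multiplicity one in the Maslov sense and not two, and that a small perturbation of $\varphi$ resolves it into a transverse crossing without changing the parity. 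Handling this degenerate-endpoint case — essentially showing the homotopy can be chosen to keep $\mathrm{sn}\,\tau\ne 0$ strictly inside $(0,1)$ while only the limiting trajectory hits $\mathrm{sn}\,\tau=0$ — is where the real work lies; everything else is the interlacing from Propositions~\ref{prop:f1p} and Lemmas~\ref{lem:4.1}, \ref{lem:4.4}, \ref{lem:upper_bound} that has already been set up.
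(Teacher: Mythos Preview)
Your approach is considerably more elaborate than the paper's. The paper does not redo any homotopy for $n\ge 2$; it gives a three-line sign/IVT argument: Lemma~\ref{lem:4.4} yields $J_1\ge 0$ at every $p=2nK(k)$, Lemma~\ref{lem:upper_bound} (together with a continuity-in-$\tau$ sign determination) yields $J_1\le 0$ at every $p=p_1^n(k)$, and hence $J$ changes sign on each of the intervals $[2nK,p_1^n]$ and $[p_1^n,2(n+1)K]$, forcing a conjugate time in each. The only homotopy in the paper is the one already carried out in Theorem~\ref{thm:Conj_C1} for the base case $t_1^{\mathrm{conj}}\ge 4K$. So the intended argument is a direct barrier argument on $J_1$, not a Maslov-index transport.

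Your homotopy route could in principle be made to work, but it contains a concrete error that would derail the count. At $k\to 0$ you read off from \eqref{eq:4.7} that $J_1/(4k)\to \sin p\,(\sin p - p\cos p)$ and assert that the positive zeros are only $p=n\pi$. This misses half of them: the factor $\sin p - p\cos p$ vanishes at the roots of $\tan p = p$, which are precisely the $k\to 0$ limits $p_1^n(0)$ of the $p_1^n(k)$ (since $f_1(p)\to p\cos p-\sin p$ as $k\to 0$). Thus in the limit the zeros of $J_1$ already come in two interlaced families, $n\pi$ and $p_1^n(0)$, with the $(2n-1)$-st zero at $n\pi=2nK(0)$ and the $2n$-th at $p_1^n(0)$ --- exactly the pattern the theorem asserts. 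With only the $n\pi$ family your Maslov count would be off by a factor of two. Even after correcting this, you would still need to upgrade Proposition~\ref{prop:Coro2.2} from an ``absence of conjugate points'' statement to an honest index-preservation statement along the family, and to control the barrier instants $p=2nK(k^s)$ and $p=p_1^n(k^s)$ simultaneously; you correctly identify this bookkeeping as the crux, but the paper's sign argument avoids it entirely.
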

\begin{proof}
From Lemma \ref{lem:4.4} it is readily seen that $\forall p=2nK(k)$,
the expression of the Jacobian $J_{1}\geq0$. From Lemma \ref{lem:upper_bound},
$J_{1}\leq0$ at $p=p_{1}^{n}(k)$ i.e., the $n$-th root of the function
$f_{1}(p)$. Hence Jacobian $J$ (\ref{eq:4.5}) takes values of opposite
signs at the endpoints of the intervals $[2nK(k),p_{1}^{n}(k)]$ and
$[p_{1}^{n}(k),2(n+1)K(k)]$. Therefore, the $n$-th conjugate time
$t_{n}^{\mathrm{conj}}(\lambda)$is bounded as $4nK(k)\leq t_{2n-1}^{\mathrm{conj}}(\lambda)\leq2p_{1}^{n}(k)$
and $2p_{1}^{n}(k)\leq t_{2n}^{\mathrm{conj}}(\lambda)\leq4(n+1)K(k)\quad\forall n\in\mathbb{N}$.\hfill$\square$\end{proof}

\begin{corollary}
\label{cor:C2_conj_nth}From \text{\emph{Theorem \ref{thm:Conj_C2}}}
and \text{\emph{Theorem \ref{thm:Conj_C1_nth}}} we see that the $n$-th
conjugate time $t_{n}^{\mathrm{conj}}(\lambda)$ for $\lambda\in C_{2}$
is bounded as $4nkK(k)\leq t_{2n-1}^{\mathrm{conj}}(\lambda)\leq2kp_{1}^{n}(k)$
and $2kp_{1}^{n}(k)\leq t_{2n}^{\mathrm{conj}}(\lambda)\leq4(n+1)kK(k)$.\end{corollary}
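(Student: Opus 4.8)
The plan is to reuse, for the $n$-th conjugate time, the rescaling observation that already underlies Theorem~\ref{thm:Conj_C2}. The starting point is the remark made right after (\ref{eq:4.19}): for $\lambda\in C_2$ the Jacobian is $J=-k\,J_1(p,\tau,k)/\bigl((1-k^2)^2(1-k\,\mathrm{sn}\,p\,\mathrm{sn}\,\tau)^2\bigr)$, with the \emph{same} function $J_1$ as in (\ref{eq:4.6}); the only change relative to $\lambda\in C_1$ is that $p$ and $\tau$ are now given by (\ref{eq:2.18}), i.e. $p=t/(2k)$ and $\tau=\psi+p$ with $\psi=\varphi/k$, rather than by (\ref{eq:2.17}), i.e. $p=t/2$ and $\tau=\varphi+p$.

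First I would note that for $k\in(0,1)$ neither the prefactor $-k$ nor the denominator $(1-k^2)^2(1-k\,\mathrm{sn}\,p\,\mathrm{sn}\,\tau)^2$ vanishes (the latter since $|k\,\mathrm{sn}\,p\,\mathrm{sn}\,\tau|<1$, exactly as in the $C_1$ case), so the conjugate times along a geodesic with $\lambda=(\psi,k)\in C_2$ are precisely the instants $t>0$ at which $J_1(t/(2k),\psi+t/(2k),k)=0$. Comparing with the geodesic $\tilde\lambda=(\varphi,k)\in C_1$ having $\varphi=\psi$ and the same modulus $k$: there the conjugate times are the instants at which $J_1(t/2,\psi+t/2,k)=0$. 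Along both geodesics the pair $(p,\tau)$ sweeps the same curve $\{(p,\psi+p):p>0\}$ in the $(p,\tau)$-plane, so $J_1$ has the same positive zeros $p_1^\star<p_2^\star<\cdots$ in the variable $p$; hence $t_n^{\mathrm{conj}}(\tilde\lambda)=2p_n^\star$ while $t_n^{\mathrm{conj}}(\lambda)=2k\,p_n^\star$, so that $t_n^{\mathrm{conj}}(\lambda)=k\,t_n^{\mathrm{conj}}(\tilde\lambda)$ for every $n$.

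It then remains only to substitute the $C_1$ bounds of Theorem~\ref{thm:Conj_C1_nth}, namely $4nK(k)\le t_{2n-1}^{\mathrm{conj}}(\tilde\lambda)\le 2p_1^n(k)$ and $2p_1^n(k)\le t_{2n}^{\mathrm{conj}}(\tilde\lambda)\le 4(n+1)K(k)$, and multiply through by $k>0$, which yields $4nkK(k)\le t_{2n-1}^{\mathrm{conj}}(\lambda)\le 2kp_1^n(k)$ and $2kp_1^n(k)\le t_{2n}^{\mathrm{conj}}(\lambda)\le 4(n+1)kK(k)$, as claimed; the limiting behaviour is inherited from Theorem~\ref{thm:Conj_C2} in the same way. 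This is a genuine corollary with no substantive obstacle; the only point requiring care is the bookkeeping of the reparametrization — checking that $J_1$ in (\ref{eq:4.19}) is literally the function in (\ref{eq:4.6}), that only the $t\leftrightarrow(p,\tau)$ relation changes by the factor $k$, and hence that the $n$-th conjugate time in $C_2$ is exactly $k$ times that of the matched $C_1$ geodesic.
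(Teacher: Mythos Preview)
Your argument is correct and follows the paper's own approach: the paper states this as a corollary with no separate proof, relying precisely on the observation (already used for Theorem~\ref{thm:Conj_C2}) that for $\lambda\in C_2$ one has $J=-kJ_1$ with the same $J_1$ and only the time--$p$ relation rescaled by $k$, so the $C_1$ bounds of Theorem~\ref{thm:Conj_C1_nth} carry over after multiplication by $k$. Your explicit matching $\tilde\lambda=(\varphi,k)\in C_1$ with $\varphi=\psi$ is a clean way to spell this out, and the bookkeeping you flag (nonvanishing prefactor and denominator, identical trajectory $p\mapsto(p,\psi+p)$ in the $(p,\tau)$-plane) is exactly what is needed.
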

\begin{theorem}
\label{thm:nth_Conj}The n-th conjugate times are bounded as: 
\begin{eqnarray*}
\lambda\in C_{1} & \implies & 4nK(k)\leq t_{2n-1}^{\mathrm{conj}}(\lambda)\leq2p_{1}^{n}(k),\quad2p_{1}^{n}(k)\leq t_{2n}^{\mathrm{conj}}(\lambda)\leq4(n+1)K(k),\\
\lambda\in C_{2} & \implies & 4nkK(k)\leq t_{2n-1}^{\mathrm{conj}}(\lambda)\leq2kp_{1}^{n}(k),\quad2kp_{1}^{n}(k)\leq t_{2n}^{\mathrm{conj}}(\lambda)\leq4(n+1)kK(k),\\
\lambda\in C_{4} & \implies & 2n\pi\leq t_{2n-1}^{\mathrm{conj}}(\lambda)\leq2p_{1}^{n}(0),\quad2p_{1}^{n}(0)\leq t_{2n}^{\mathrm{conj}}(\lambda)\leq2(n+1)\pi.
\end{eqnarray*}
\end{theorem}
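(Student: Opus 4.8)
The statement is a compilation, so the plan is to treat the three cases separately, with nearly all of the work already in place. The bound for $\lambda\in C_{1}$ is verbatim Theorem~\ref{thm:Conj_C1_nth}, and the bound for $\lambda\in C_{2}$ is verbatim Corollary~\ref{cor:C2_conj_nth}; for these two lines nothing remains to prove. The only genuinely new line is the case $\lambda\in C_{4}$, and for it I would reproduce the limiting device used in the proof of Theorem~\ref{thm:Conj_C3}(1), since $C_{4}$ is precisely the boundary stratum $k\to0^{+}$ of $C_{1}$.

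Concretely, given $\lambda\in C_{4}$ I would choose a continuous path $\lambda^{s}\in C$, $s\in[0,1]$, with $\lambda^{0}=\lambda$ and $\lambda^{s}\in C_{1}$ for $s>0$, arranged so that $k^{s}\to0^{+}$ as $s\to0^{+}$. For every $s>0$ Theorem~\ref{thm:Conj_C1_nth} gives $4nK(k^{s})\le t_{2n-1}^{\mathrm{conj}}(\lambda^{s})\le2p_{1}^{n}(k^{s})$ and $2p_{1}^{n}(k^{s})\le t_{2n}^{\mathrm{conj}}(\lambda^{s})\le4(n+1)K(k^{s})$. Passing to the limit $s\to0^{+}$: continuity of $K$ gives $4nK(k^{s})\to4nK(0)=2n\pi$ and $4(n+1)K(k^{s})\to2(n+1)\pi$, while continuity of the root $p_{1}^{n}$ in $k$ --- which follows from the strict monotonicity established in the proof of Proposition~\ref{prop:f1p} together with the implicit function theorem --- gives $p_{1}^{n}(k^{s})\to p_{1}^{n}(0)$, so the bounding values converge to exactly those asserted for $C_{4}$. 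To carry the conjugate times themselves through the limit I would invoke continuity of the Jacobian $J(\lambda,t)$ jointly in $(\lambda,t)$ in the natural coordinates $(\gamma,c)$ on $C$, as in Theorem~\ref{thm:Conj_C3}: the sign of $J$ alternates across the points $p=2nK(k^{s})$ and $p=p_{1}^{n}(k^{s})$ --- this is exactly the sign information extracted in the proof of Theorem~\ref{thm:Conj_C1_nth} from Lemmas~\ref{lem:4.4} and~\ref{lem:upper_bound} --- and the strict sign changes persist at $s=0$, forcing a conjugate time of $\lambda$ in each of the limiting subintervals $(2n\pi,\,2p_{1}^{n}(0))$ and $(2p_{1}^{n}(0),\,2(n+1)\pi)$. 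Proposition~\ref{prop:Coro2.2} applied along the homotopy, together with isolation of conjugate times under hypotheses (H1)--(H4), prevents conjugate times from crossing the sign-change points and so fixes the enumeration, which gives the stated two-sided bounds for $C_{4}$.

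The main obstacle is making this limiting argument rigorous for the $n$-th conjugate time, not merely the first (which is all that Theorem~\ref{thm:Conj_C3} needed): one has to exclude conjugate times of $\lambda^{s}$ escaping to, or emerging from, the boundary of the relevant $p$-intervals as $s\to0^{+}$, and to handle the degenerate subcase $\mathrm{sn}\tau=0$, in which the left endpoint $p=2nK$ is itself a conjugate point and the sign change there is not strict. Once strictness and stability of all $n$ sign changes of $J$ along the homotopy are in hand --- which reduces entirely to the sign estimates in Lemmas~\ref{lem:4.4} and~\ref{lem:upper_bound} --- the remainder is a routine continuity and homotopy argument.
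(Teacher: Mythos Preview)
Your proposal is correct and follows exactly the paper's approach: the paper's proof is two sentences, citing Theorem~\ref{thm:Conj_C1_nth} and Corollary~\ref{cor:C2_conj_nth} for $\lambda\in C_{1}\cup C_{2}$ and then simply stating ``For $\lambda\in C_{4}$, apply $\lim_{k\rightarrow0+}$ to bounds on $n$-th conjugate time for $\lambda\in C_{1}$.'' Your treatment of the $C_{4}$ case is considerably more careful than the paper's, which does not spell out any of the continuity or homotopy justifications you raise.
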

\begin{proof}
The bounds follow from Theorem \ref{thm:Conj_C1_nth} and Corollary
\ref{cor:C2_conj_nth} for $\lambda\in C_{1}\cup C_{2}$. For $\lambda\in C_{4}$,
apply $\lim_{k\rightarrow0+}$ to bounds on $n$-th conjugate time
for $\lambda\in C_{1}$. \hfill$\square$\end{proof}

\begin{remark}
Notice that any extremal trajectory $q(t)$ either has countable number
of conjugate points, or is free of conjugate points. This alternative
is similar to that for LQ problems \cite{Agrachev_LQ}.
\end{remark}
It is conjectured that Rolle\textquoteright s theorem can be generalized
for sub-Riemannian problems such that between any two Maxwell points
there is one conjugate point, along any geodesic (the conjecture was
stated by A. A. Agrachev in a private conversation with the second
author).
\begin{proposition}
For any geodesic in the left-invariant sub-Riemannian problem on the
Lie group $\mathrm{SH}(2)$, between any two consecutive Maxwell points
there is exactly one conjugate point. \end{proposition}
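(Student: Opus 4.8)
The plan is to line up, along a fixed geodesic, the list of Maxwell instants against the list of conjugate instants and to show that the two interlace. First I would dispose of $\lambda\in C_3\cup C_4\cup C_5$: by Proposition~\ref{prop:3.7} the first Maxwell time is $+\infty$ there, so the geodesic carries no Maxwell point and the assertion is vacuously true. This leaves $\lambda\in C_1\cup C_2$, for which the Maxwell set along $\mathrm{Exp}(\lambda,\cdot)$ is exactly the union of the strata described in Theorem~\ref{thm:The-Maxwell-strata}.

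For $\lambda\in C_1$ I would first observe that — setting aside for the moment the degenerate instants where $\mathrm{sn}\tau=0$ or $\mathrm{cn}\tau=0$ — the Maxwell instants along the geodesic are $\{\,4nK(k):n\in\mathbb N\,\}\cup\{\,2p_1^n(k):n\in\mathbb N\,\}$, and by the localization $p_1^n(k)\in(2nK,(2n+1)K)$ of Proposition~\ref{prop:f1p} they assemble into a strictly increasing sequence $T_1<T_2<\cdots$ with $T_{2n-1}=4nK(k)$ and $T_{2n}=2p_1^n(k)$. The conjugate instants along the geodesic also form a strictly increasing sequence $\sigma_1<\sigma_2<\cdots$ (hypotheses (H1)--(H4), which hold here, guarantee they are isolated), and Theorem~\ref{thm:Conj_C1_nth} places them precisely as $\sigma_{2n-1}\in[4nK(k),2p_1^n(k)]=[T_{2n-1},T_{2n}]$ and $\sigma_{2n}\in[2p_1^n(k),4(n+1)K(k)]=[T_{2n},T_{2n+1}]$; moreover, by Lemma~\ref{lem:4.4} the Jacobian does not vanish at $p=2nK$ for the generic geodesic, and by Lemma~\ref{lem:upper_bound} it never vanishes at $p=p_1^n$, so in fact $\sigma_m\in(T_m,T_{m+1})$ for every $m$. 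Since the open intervals $(T_m,T_{m+1})$ are pairwise disjoint and the $m$-th conjugate instant sits in the $m$-th of them, each interval delimited by two consecutive Maxwell points contains exactly one conjugate point. This settles $\lambda\in C_1$.

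For $\lambda\in C_2$ the scheme is the same after the time change $t=2kp$: by Theorem~\ref{thm:The-Maxwell-strata}(3),(6) the generic Maxwell instants are $\{\,4nkK(k)\,\}\cup\{\,2kp_2^n(k)\,\}$, ordered via $p_2^n(k)\in(2nK,(2n+1)K)$ (Proposition~\ref{prop:f4p}), and by Corollary~\ref{cor:C2_conj_nth} the conjugate instants lie in $[4nkK(k),2kp_1^n(k)]$ and $[2kp_1^n(k),4(n+1)kK(k)]$. The one point that is \emph{not} automatic here is that the separating Maxwell value $p_2^n(k)$ still separates the two conjugate instants: combining $f_1(p)+f_4(p)=(1-k^2)p\,\mathrm{cn}p$ with the monotonicity of $g_1$ and $g_4$ from Propositions~\ref{prop:f1p} and~\ref{prop:f4p} gives $p_2^n(k)<p_1^n(k)$, so I would need the $(2n-1)$-th positive root of $J_1(\cdot,\tau,k)$ to fall below $p_2^n(k)$. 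Since $J_1>0$ at $p=2nK$ (Lemma~\ref{lem:4.4}) and $J_1<0$ at $p=p_1^n$ (proof of Theorem~\ref{thm:Conj_C1_nth}), with a single sign change between them, this amounts to checking $J_1<0$ at $p=p_2^n(k)$; concretely one substitutes the defining relation $f_4(p_2^n)=0$, i.e.\ $\mathrm{cn}p\,\mathrm E(p)=(1-k^2)p\,\mathrm{cn}p+\mathrm{sn}p\,\mathrm{dn}p$, into $J_1=-4k(\alpha_1+\alpha_2+\alpha_3)$ and verifies the sign of the reduced expression. I expect this sign estimate (equivalently, the sharpened bound $\sigma_{2n-1}\le 2kp_2^n(k)$) to be the main technical obstacle.

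Finally I would treat the degenerate geodesics, those along which a candidate Maxwell instant actually meets $\mathrm{sn}\tau=0$ or $\mathrm{cn}\tau=0$: by Proposition~\ref{prop:conj_pt_3.6} such an instant is conjugate rather than Maxwell, so two consecutive Maxwell points move apart, and I would handle this as the exactness statements around \eqref{eq:4.21}--\eqref{eq:4.24} are handled, checking (as in Proposition 5.8 of \cite{max_sre}, whose reflection calculus coincides with ours) that the Jacobian has a double zero at the coincidence, so that the two conjugate instants which Theorem~\ref{thm:Conj_C1_nth} (resp.\ Corollary~\ref{cor:C2_conj_nth}) places on either side of it merge into one; counted with multiplicity, the enlarged interval between the two surviving consecutive Maxwell points again carries exactly one conjugate point. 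With this bookkeeping and the $C_2$ sign estimate in place, the disjoint-interval argument of the generic case completes the proof.
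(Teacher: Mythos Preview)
Your overall strategy---list the Maxwell times, list the conjugate-time brackets from Theorem~\ref{thm:Conj_C1_nth}/Corollary~\ref{cor:C2_conj_nth}, and read off the interlacing $t_n^{\mathrm{Max}}\le t_n^{\mathrm{conj}}\le t_{n+1}^{\mathrm{Max}}$---is exactly what the paper does. The paper's proof is in fact considerably terser than yours: it records \eqref{eq:tconj_bound1}--\eqref{eq:tconj_bound2} and \eqref{eq:tmax_bound1}--\eqref{eq:tmax_bound2}, asserts the comparison, and dismisses $C_3\cup C_4\cup C_5$ as vacuous in one line.

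Where you go beyond the paper is precisely the $C_2$ subtlety you flag. You are right that for $\lambda\in C_2$ the Maxwell instants come from $p_2^n(k)$ while the conjugate brackets in Corollary~\ref{cor:C2_conj_nth} are stated in terms of $p_1^n(k)$, and your computation $p_2^n(k)<p_1^n(k)$ (via $g_1+g_4=(1-k^2)p$ and the opposite monotonicities) is correct. The paper does \emph{not} address this: it writes $t_{2n}^{\mathrm{Max}}=2kp_2^n(k)$ in \eqref{eq:tmax_bound2} and $t_{2n-1}^{\mathrm{conj}}\le 2kp_1^n(k)$ in \eqref{eq:tconj_bound2} and then simply asserts the comparison, so the inequality $t_{2n-1}^{\mathrm{conj}}\le t_{2n}^{\mathrm{Max}}$ is not actually closed by the displayed bounds. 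Your proposed fix---substitute the defining relation $f_4(p_2^n)=0$ into $J_1$ and check the sign---is the natural way to repair this, and you are right to single it out as the one genuinely new estimate needed.

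Your handling of the degenerate $\mathrm{sn}\tau=0$ / $\mathrm{cn}\tau=0$ geodesics is also more careful than the paper's; the paper's proof simply ignores these boundary cases. Your multiplicity-counting idea is reasonable, though arguably more than the statement requires: once a candidate Maxwell instant becomes conjugate (Proposition~\ref{prop:conj_pt_3.6}) it drops out of the Maxwell list, and the enlarged gap between the surviving Maxwell points then contains the merged conjugate point, so the ``exactly one'' count survives without having to certify a double zero of $J$.
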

\begin{proof}
By Theorem \ref{thm:nth_Conj}, the $n$-th conjugate time is bounded
as: 
\begin{eqnarray}
\lambda\in C_{1} & \implies & 4nK(k)\leq t_{2n-1}^{\mathrm{conj}}(\lambda)\leq2p_{1}^{n}(k),\quad2p_{1}^{n}(k)\leq t_{2n}^{\mathrm{conj}}(\lambda)\leq4(n+1)K(k),\label{eq:tconj_bound1}\\
\lambda\in C_{2} & \implies & 4nkK(k)\leq t_{2n-1}^{\mathrm{conj}}(\lambda)\leq2kp_{1}^{n}(k),\quad2kp_{1}^{n}(k)\leq t_{2n}^{\mathrm{conj}}(\lambda)\leq4(n+1)kK(k).\label{eq:tconj_bound2}
\end{eqnarray}
By Theorem \ref{thm:The-Maxwell-strata}, the $n{}^{th}$ Maxwell
time is bounded as:
\begin{eqnarray}
\lambda\in C_{1} & \implies & t_{2n-1}^{\mathrm{Max}}(\lambda)=4nK(k),\quad t_{2n}^{\mathrm{Max}}(\lambda)=2p_{1}^{n}(k),\label{eq:tmax_bound1}\\
\lambda\in C_{1} & \implies & t_{2n-1}^{\mathrm{Max}}(\lambda)=4nkK(k),\quad t_{2n}^{\mathrm{Max}}(\lambda)=2kp_{2}^{n}(k),\label{eq:tmax_bound2}
\end{eqnarray}
where $p_{1}^{n}(k)$ and $p_{2}^{n}(k)$ are the $n$-th roots of
the functions $f_{1}(p)=\mathrm{cn}p\,\mathrm{E}(p)-\mathrm{sn}p\,\mathrm{dn}p$
and $f_{4}(p)=-\mathrm{cn}p\,\mathrm{E}(p)+p\,\mathrm{cn}p(1-k^{2})+\mathrm{sn}p\,\mathrm{dn}p$
respectively and are both bounded as $(2nK(k),(2n+1)K(k))$ (\ref{eq:3.22}),
(\ref{eq:3.26}). By comparison of (\ref{eq:tconj_bound1}) with (\ref{eq:tmax_bound1})
and comparison of (\ref{eq:tconj_bound2}) with (\ref{eq:tmax_bound2}),
every conjugate point lies between consecutive Maxwell points for
$\lambda\in C_{1}\cup C_{2}$:
\[
t_{n}^{\mathrm{Max}}(\lambda)\leq t_{n}^{\mathrm{conj}}(\lambda)\leq t_{n+1}^{\mathrm{Max}}(\lambda),\quad n\in\mathbb{N}.
\]
From Theorems \ref{thm:The-Maxwell-strata} and \ref{thm:Conj_C3},
for $\lambda\in C_{3}\cup C_{5}$, there are neither any Maxwell points
nor any conjugate points whereas for $\lambda\in C_{4}$ there are
no Maxwell points. Hence, the proposition is pointless in these trivial
cases. \hfill$\square$
\end{proof}

\subsection{Sub-Riemannian Sphere and Wavefront}

Having explicit parametrization of the exponential mapping $\mathrm{Exp}(\lambda,t)$,
$\lambda\in C,\, t>0$ and the global bound on the cut time, we perform
a graphic study of some essential objects in the sub-Riemannian problem
on $\mathrm{SH}(2)$ in the rectifying coordinates $(R_{1},R_{2},z)$.
In particular we plot the sub-Riemannian sphere $S_{R}$ and the sub-Riemannian
wavefront $W_{R}$. Recall that the sub-Riemannian wavefront $W_{R}(q_{0};R)$
at $q_{0}$ is the set of end-points of geodesics with sub-Riemannian
length $R$ starting from $q_{0}$ and the sub-Riemannian sphere $S_{R}(q_{0};R)$
at $q_{0}$ is the set of end-points of minimizing geodesics of sub-Riemannian
length $R$ and starting from $q{}_{0}$:

\begin{eqnarray*}
W_{R} & = & \left\{ q=\mathrm{Exp}(\lambda,R)\in M\quad\vert\quad\lambda\in C\right\} ,\\
S_{R} & = & \left\{ q=\mathrm{Exp}(\lambda,R)\in M\quad\vert\quad\lambda\in C,\quad t_{\mathrm{cut}}(\lambda)\geq R\right\} =\left\{ q\in M\quad\vert\quad d(q_{0},q)=R\right\} ,
\end{eqnarray*}
where $R$ is the radius of sub-Riemannian sphere or wavefront and
$d\left(q_{0},q_{1}\right)=\mathrm{inf}\{l(q(.))\}$ is the sub-Riemannian
distance corresponding to sub-Riemannian length functional $l(q(.))$
(\ref{eq:2.4}) such that $q(.)$ is horizontal and $q(0)=q_{0},\quad q(t_{1})=q_{1}$.
Note the essential difference between sub-Riemannian wavefront and
sub-Riemannian sphere. The geodesics in sub-Riemannian wavefront are
only locally minimizing and drawn for time greater than the cut time
as well. On the contrary, the geodesics in sub-Riemannian sphere are
globally minimizing and therefore drawn for time not greater than
the upper bound of cut time and therefore, $S_{R}\subset W_{R}$,
but $S_{R}\neq W_{R}$ for $R>0$ and $S_{R}$ is the exterior component
of $W_{R}$ in the following sense:
\[
S_{R}=\partial(M\setminus W_{R}).
\]
A plot of the sub-Riemannian sphere is presented in Figure \ref{fig:SR_Sphere}
and plots of cutout of the sub-Riemannian wavefront are presented
in Figures \ref{fig:SR_Cutout}--\ref{fig:Sph_Planes_Cutout}. From
Figure \ref{fig:Sph_Planes_Cutout} it is clear that the wavefront
has self intersections in the surfaces $R_{i}(q_{t})=0$ and $z_{t}=0$
as expected from the general and complete description of Maxwell strata.
Figure \ref{fig:SR_Mat_WF} shows the Matryoshka of the sub-Riemannian
wavefront where self intersections in wavefronts of different radii
are clearly visible. In Figure \ref{fig:SR_Mat_Sph} we present the
Matryoshka of the sub-Riemannian spheres $S_{R}$ for different $R>0$.
Plots are presented from two different viewpoints for better visualization.
Note that as expected, exterior view of the sub-Riemannian sphere
is same as that of wavefront.
\begin{figure}
\centering{}\includegraphics[scale=0.5]{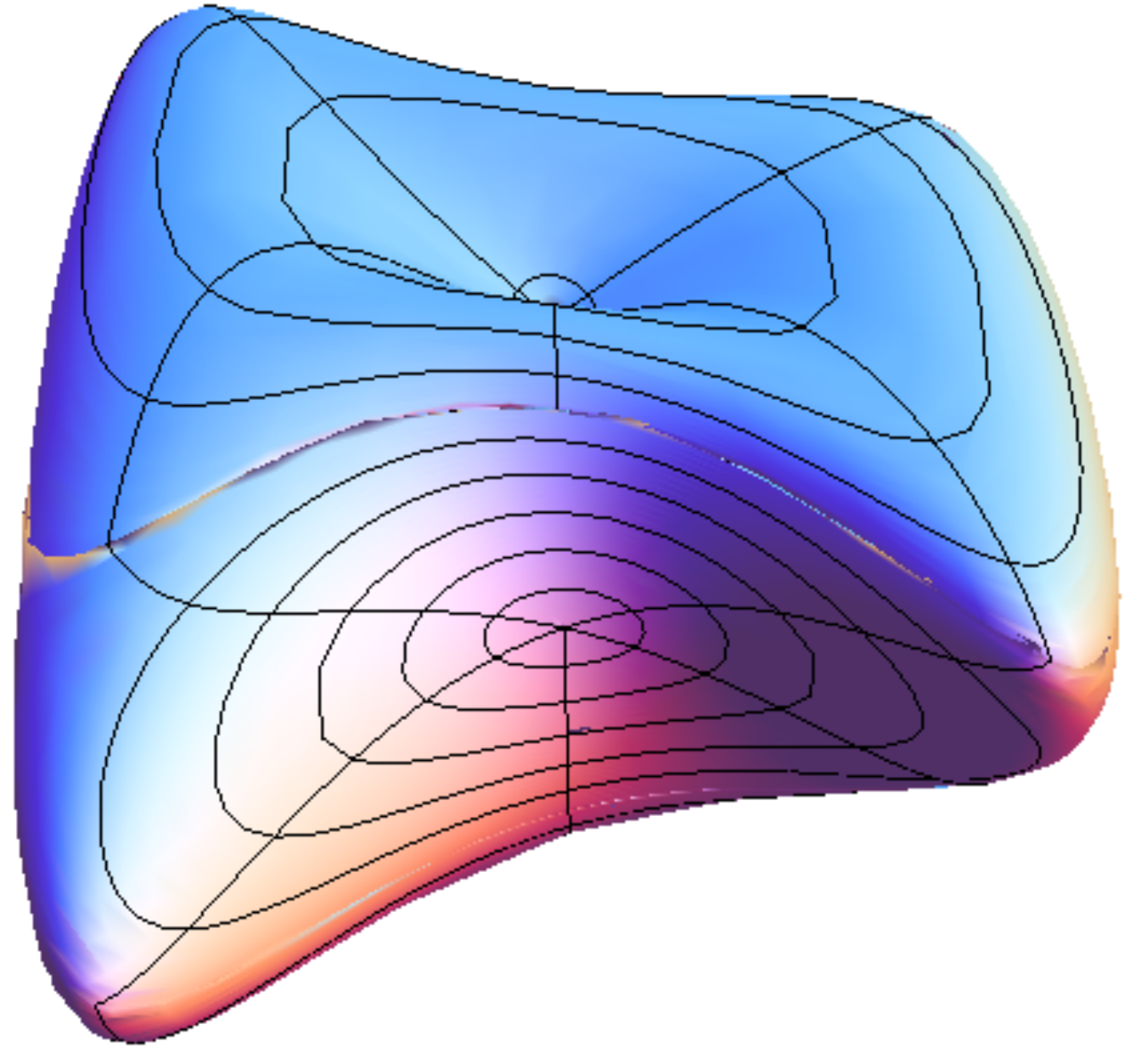}\protect\caption{\label{fig:SR_Sphere}Sub-Riemannian sphere for $R=2$}
\end{figure}
\begin{figure}
\centering{}\includegraphics[scale=0.4]{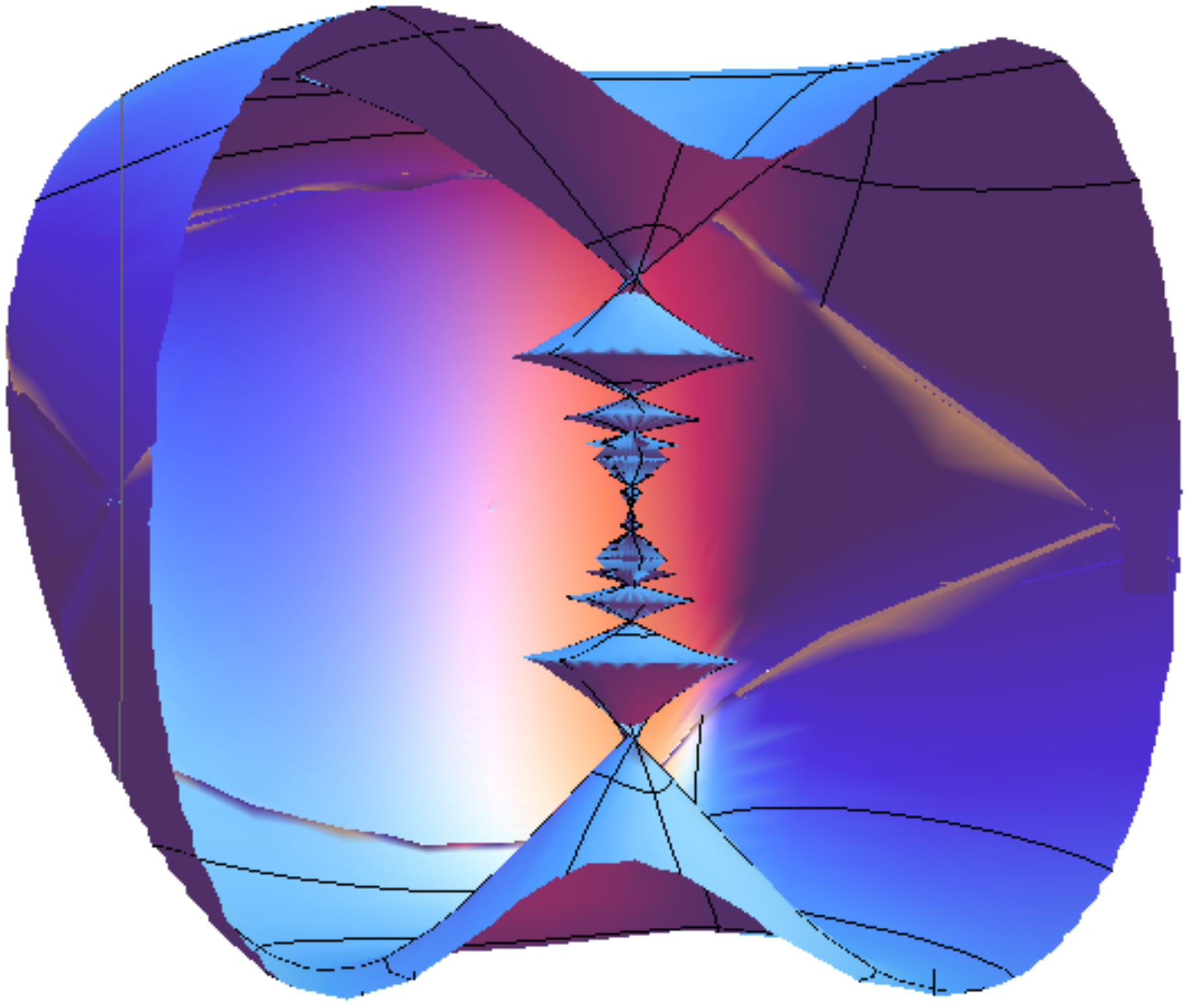}\protect\caption{\label{fig:SR_Cutout}Cutout of the sub-Riemannian wavefront for $R=2$}
\end{figure}

\begin{figure}
\begin{minipage}[t]{0.45\columnwidth}%
\begin{center}
\includegraphics[scale=0.3]{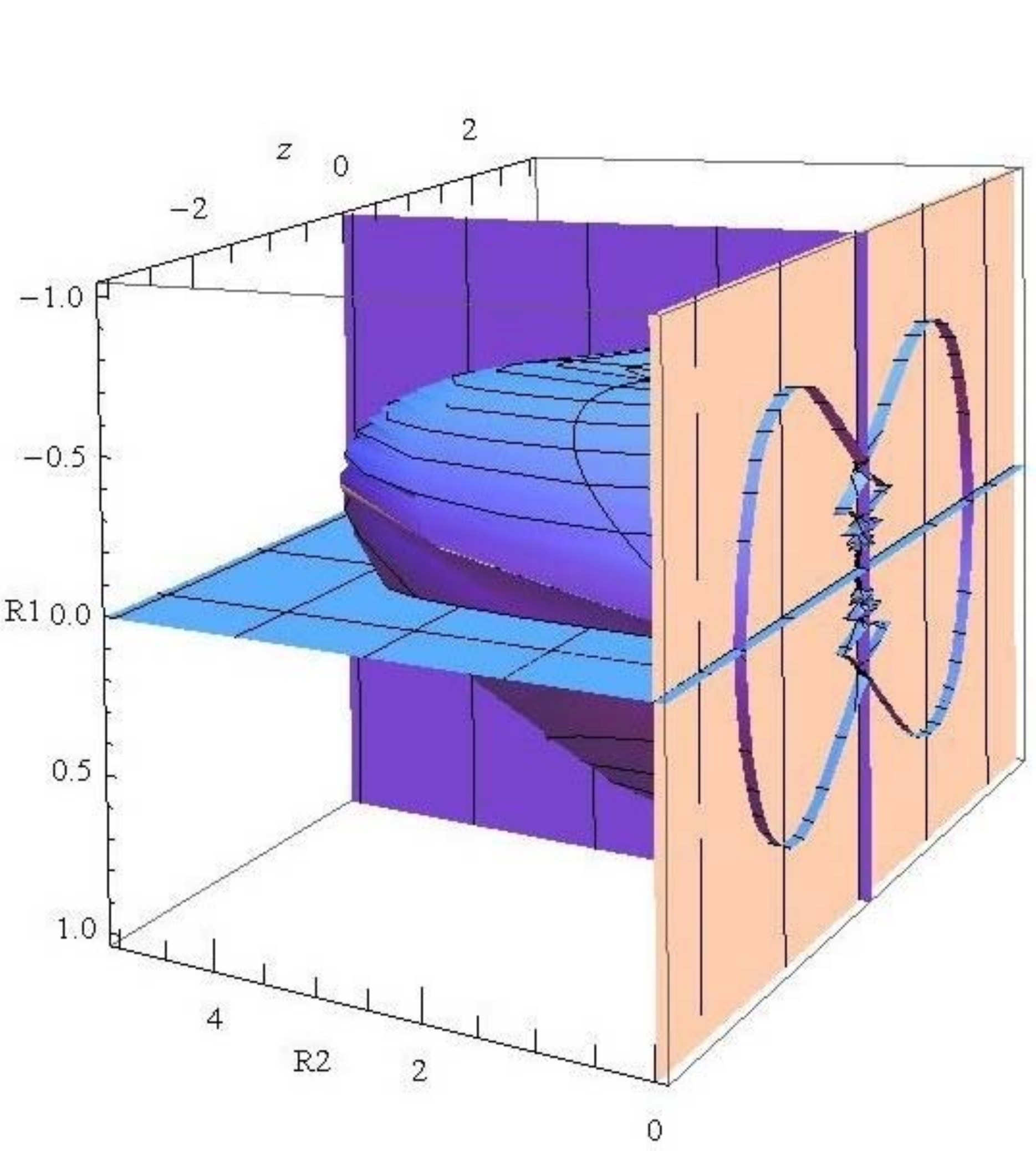}
\par\end{center}%
\end{minipage}%
\begin{minipage}[t]{0.45\columnwidth}%
\begin{center}
\includegraphics[scale=0.2]{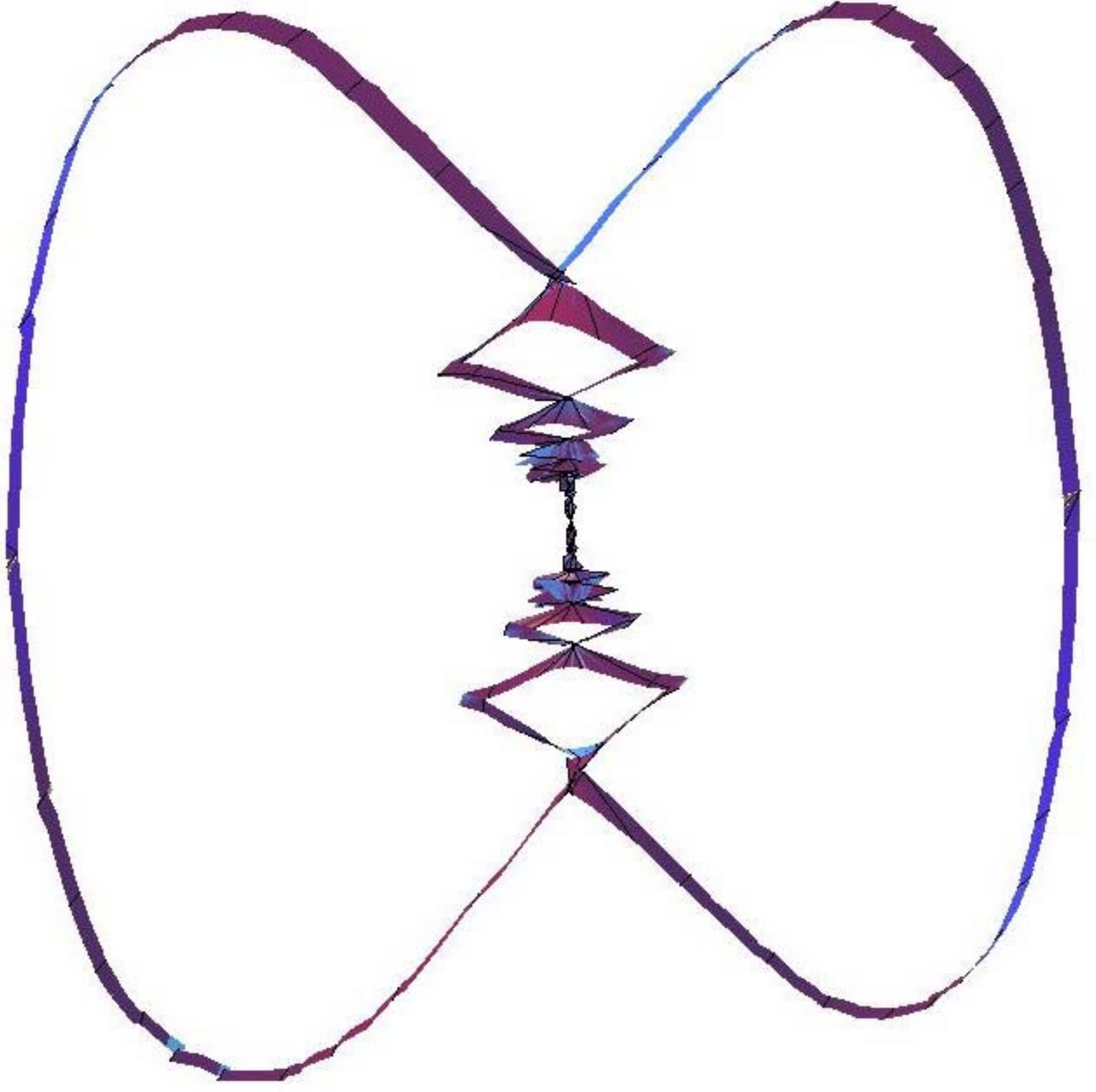}
\par\end{center}%
\end{minipage}\protect\caption{\label{fig:Sph_Planes_Cutout}Cutout of the sub-Riemannian wavefront
with self intersections in the planes $R_{i}(q_{t})=0$ and $z_{t}=0$
for $R=2$}
\end{figure}
\begin{figure}
\centering{}\includegraphics[scale=0.6]{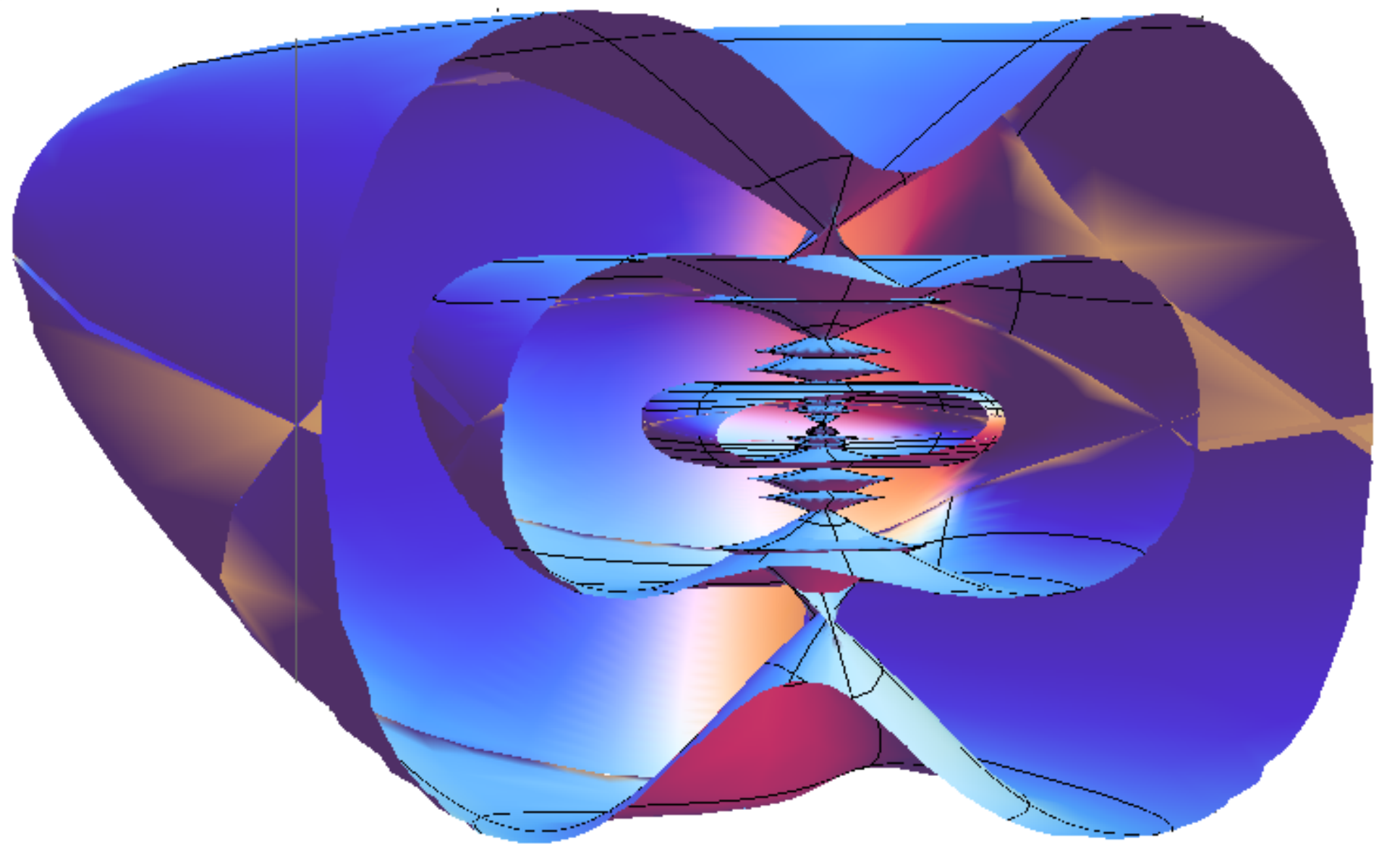}\protect\caption{\label{fig:SR_Mat_WF}Matryoshka of sub-Riemannian wavefronts $W_{R}$
for $R=1,2,3$}
\end{figure}
\begin{figure}
\begin{minipage}[t]{0.45\columnwidth}%
\begin{center}
\includegraphics[scale=0.3]{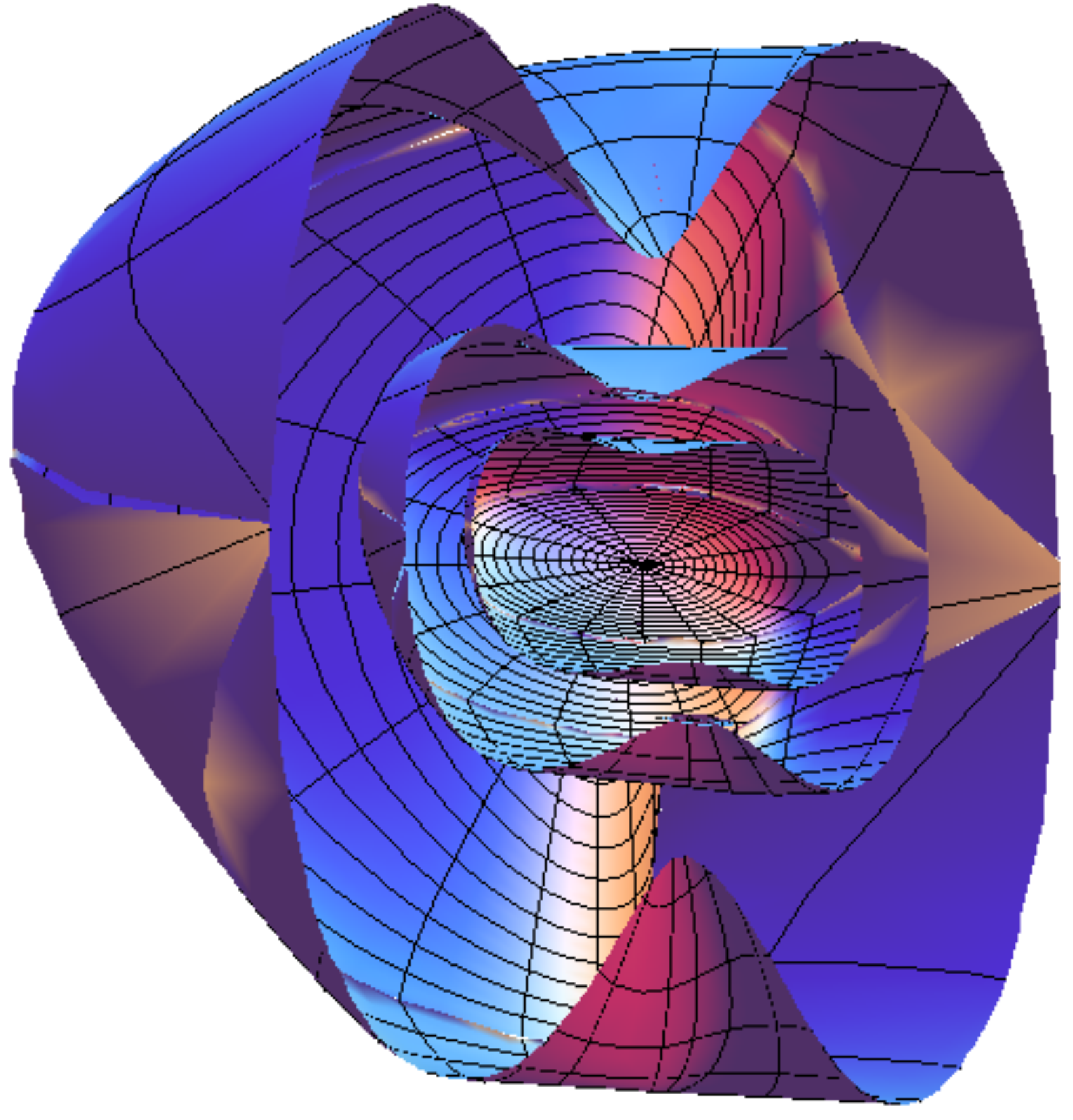}
\par\end{center}%
\end{minipage}%
\begin{minipage}[t]{0.45\columnwidth}%
\begin{center}
\includegraphics[scale=0.25]{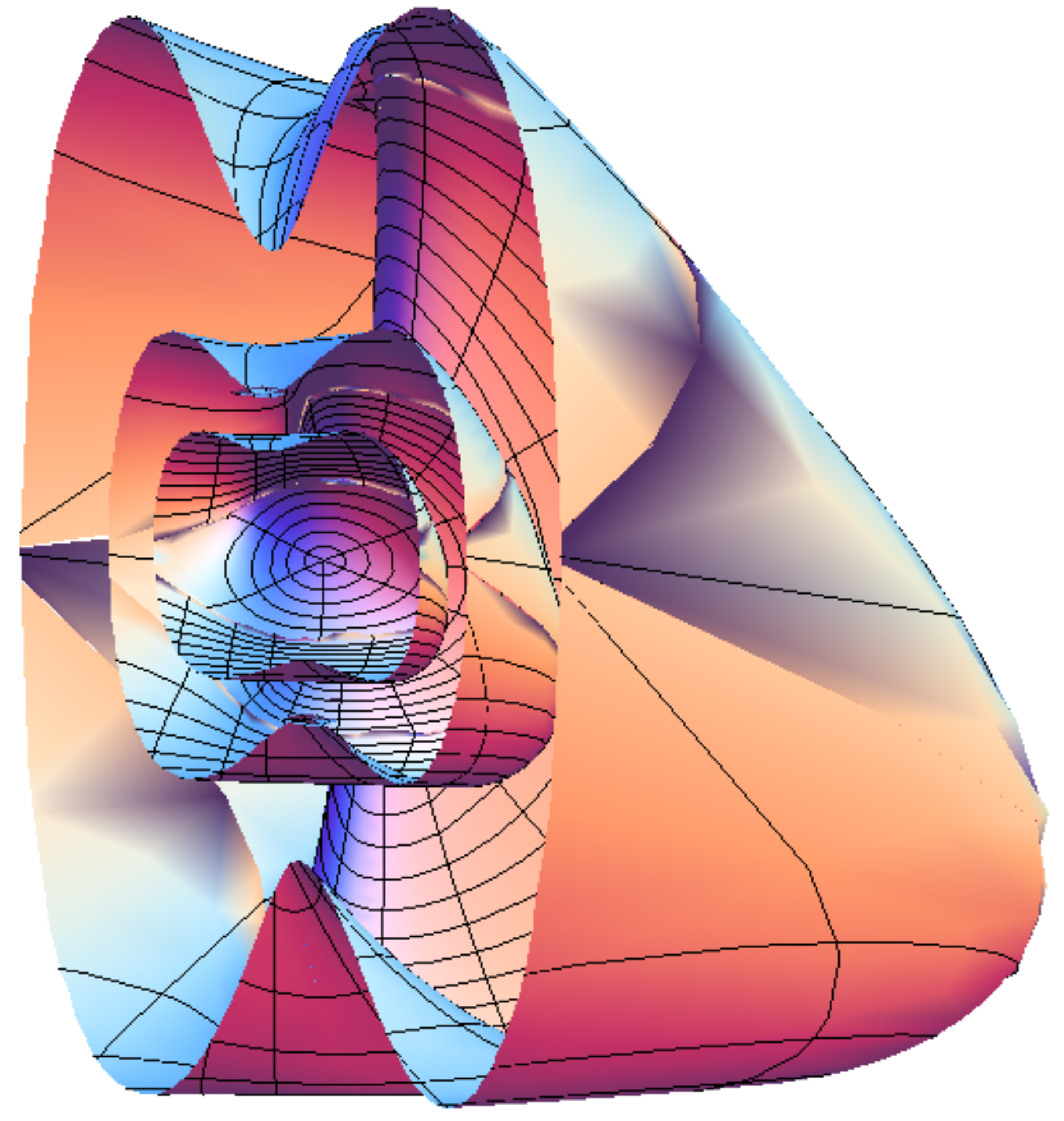}
\par\end{center}%
\end{minipage}\protect\caption{\label{fig:SR_Mat_Sph}Matryoshka of sub-Riemannian spheres $S_{R}$
for $R=1,2,3$}
\end{figure}

\section{Future Work}

In this paper we extended our research on the sub-Riemannian problem
on the Lie group $\mathrm{SH}(2)$ that was initiated in \cite{Extremal_Pseudo_Euclid}.
We obtained complete description of the Maxwell points, calculated
the upper bound on the cut time and computed the exact upper and lower
bounds for the $n$-th conjugate time, $n\in\mathbb{N}$. The next
research direction is the global optimality of sub-Riemannian geodesics.
In this regard we conjecture that the cut time is equal to the first
Maxwell time corresponding to the group of discrete symmetries of
the exponential mapping. This conjecture will be proved in our forthcoming
work on the sub-Riemannian problem on SH(2) \cite{cut_syn_sh2}.

\section{Conclusion}

The study of the sub-Riemannian problem on the group $\mathrm{SH}(2)$
is an important research goal that was initiated in \cite{Extremal_Pseudo_Euclid}
and has been continued in this work. We obtained a complete description
of the Maxwell points and global upper bound on the cut time. We also
computed the exact lower and upper bound of the $n$-th conjugate
time. We discovered an unexpected symmetry in the Jacobian expression
and the conjugate points in the case of oscillating and rotating pendulum
which hasn't been observed in optimality analysis in sub-Riemannian
problem on SE(2) \cite{cut_sre1}, the Engel group \cite{engel_conj}
and the Euler elastic problem \cite{Euler_Conj}. We conclude that
the $n$-th conjugate time is bounded by similar functions from below
and above for both $\lambda\in C_{1}$ and $\lambda\in C_{2}$. Moreover,
we showed that each geodesic contains either zero or a countable number
of conjugate points. We also proved a conjecture on generalized Rolle's
theorem for sub-Riemannian problem on Lie group $\mathrm{SH}(2)$.
\begin{acknowledgements}
We thank Prof. A. Yu. Popov for the proof of Lemma \ref{lem:upper_bound}.
\end{acknowledgements}

\bibliographystyle{unsrt}
\bibliography{ref}

\end{document}